\pgfplotsset{compat=1.18}
\newtheorem{assumption}{Assumption}
\newcommand{\dR}{\mathbb{R}}
\newcommand{\dC}{\mathbb{C}}
\newcommand{\dE}{\mathbb{E}}
\DeclarePairedDelimiterXPP{\Pb}[1]{\mathbb{P}}{\lparen}{\rparen}{}{ #1}
\DeclarePairedDelimiterXPP{\E}[1]{\mathbb{E}}[]{}{ #1}
\DeclarePairedDelimiterX{\Set}[1]\lbrace\rbrace{ #1}
\DeclarePairedDelimiterX{\norm}[1]\lVert\rVert{\ifblank{#1}{\: \cdot \:}{#1}}
\DeclareMathOperator{\Poi}{Poi}
\newcommand{\bilingualcommand}[3]{%
  \newcommand{#1}[1][\ ]{%
    ##1%
    \iflanguage{english}{\text{#2}}{%
      \iflanguage{french}{\text{#3}}{}%
    }%
    ##1%
  }%
}
\bilingualcommand{\where}{where}{où}
\bilingualcommand{\textif}{if}{si}
\bilingualcommand{\textand}{and}{et}
\bilingualcommand{\textiff}{if and only if}{si et seulement si}
\bilingualcommand{\otherwise}{otherwise}{sinon}
\newcommand{\eps}{\varepsilon}
\newcommand{\quand}{\quad \textand{} \quad}
\title{Community detection with the Bethe-Hessian}
\begin{document}

\maketitle

\begin{abstract}%
  The Bethe-Hessian matrix, introduced by Saade, Krzakala, and Zdeborová \citep{saade2014spectral}, is a Hermitian matrix designed for applying spectral clustering algorithms to sparse networks. Rather than employing a non-symmetric and high-dimensional non-backtracking operator, a spectral method based on the Bethe-Hessian matrix is conjectured to also reach the Kesten-Stigum detection threshold in the sparse stochastic block model (SBM). We provide the first rigorous analysis of the Bethe-Hessian spectral method in the SBM under both the bounded expected degree and the growing degree regimes. Specifically, we demonstrate that: (i) When the expected degree $d\geq 2$, the number of negative outliers of the Bethe-Hessian matrix can consistently estimate the number of blocks above the Kesten-Stigum threshold, thus confirming a conjecture from \cite{saade2014spectral} for $d\geq 2$. (ii) For sufficiently large $d$, its eigenvectors can be used to achieve weak recovery. (iii) As  $d\to\infty$, we establish the concentration of the locations of its negative outlier eigenvalues, and weak consistency can be achieved via a spectral method based on the Bethe-Hessian matrix. \footnote{Accepted for presentation at the Conference on Learning Theory (COLT) 2025.}
\end{abstract}

\begin{keywords}
    Bethe-Hessian matrix, spectral clustering, Kesten-Stigum threshold, stochastic block model
\end{keywords}

\section{Introduction}\label{sec:intro}

The challenge of recovering community structures in networks has spurred significant advancements in spectral algorithms for very sparse graphs. A popular model for studying community detection on random graphs is the stochastic block model (SBM), first introduced in \cite{holland1983stochastic}. The SBM is a generative model for random graphs with a community structure, serving as a useful benchmark for clustering algorithms on graph data. When the random graph generated by an SBM is sparse with bounded expected degrees, a phase transition has been observed around the so-called \textit{Kesten-Stigum threshold}.  In particular, above this threshold, a wealth of algorithms are known to achieve weak recovery (better than a random guess) \citep{mossel2018proof,abbe2018proof,coja2018information,hopkins2017efficient,ding2022robust}. Several spectral algorithms have been proposed based on different matrices associated with the SBM, including self-avoiding \citep{massoulie2014community}, non-backtracking \citep{decelle2011asymptotic, krzakala2013spectral, bordenave2018nonbacktracking}, graph powering \citep{abbe2020graph}, or distance \citep{stephan2019robustness} matrices. For additional references and a more in-depth discussion of the Kesten-Stigum detection threshold, we refer interested readers to the survey of \cite{abbe2018community}.

For spectral clustering in random graphs with constant expected degree, a popular choice is the \textit{non-backtracking operator} \citep{krzakala2013spectral,bordenave2018nonbacktracking}. In the case of the SBM, it has been shown \citep{bordenave2018nonbacktracking} that the spectral method based on the non-backtracking operator can achieve the Kesten-Stigum threshold. In recent years, the non-backtracking operator has been crucial for analyzing the spectrum of sparse random matrices \citep{benaych2020spectral,alt2021extremal,dumitriu2022extreme,bordenave2020new,bordenave2019eigenvalues} and for enabling effective low-rank matrix recovery under sparse noise \citep{bordenave2020new,stephan2022non,stephan2024non}. Compared to the more classical adjacency matrix or Laplacian matrix, the non-backtracking operator is non-Hermitian, and its spectrum is more informative and stable when the underlying graph is very sparse.

Recent works on the spectra of sparse random matrices \citep{benaych2019largest,alt2024localized,hiesmayr2023spectral} show that, when the expected degree is constant, the top eigenvalues and eigenvectors of the adjacency matrix are not informative about the partition. This phenomenon is known as \textit{eigenvector localization} in random matrix theory for sparse Hermitian random matrices. Removing high-degree vertices can mitigate localization \citep{feige2005spectral,chin2015stochastic,le2017concentration}, but it causes significant information loss in the original graph, and approaches based on regularized adjacency or Laplacian matrices have yet to reach the fundamental detection threshold \citep{le2017concentration}.

On the other hand, a general phenomenon is that the spectrum of non-Hermitian random matrices is less sensitive to rows or columns with large $\ell_2$-norms \citep{benaych2020spectral,coste2023sparse,bordenave2022convergence,he2023edge} compared to the Hermitian ones. This partially explains why the non-backtracking operator performs better on very sparse graphs compared to other Hermitian operators. This \textit{“asymmetry helps”} principle has been leveraged to design novel algorithms with improved performance \citep{chen2021asymmetry,bordenave2020detection,stephan2024non} in low-rank matrix recovery problems.

The non-backtracking matrix is of size $2m\times 2m$, where $m$ is the number of edges in a graph, rather than $n\times n$, where $n$ is the number of vertices. When the average degree of a graph is large, finding the eigenvalues and eigenvectors of such a high-dimensional matrix is computationally expensive. Although, as shown in \cite{krzakala2013spectral,stephan2022sparse}, we can work with a smaller $2n \times 2n$ non-Hermitian block matrix using the Ihara-Bass formula \citep{bass_iharaselberg_1992}, this approach is still less efficient than the spectral method on $n \times n$ Hermitian matrices. Additionally, linear algebra methods are generally faster and more stable for symmetric matrices than for non-symmetric ones. More importantly, the reduction to a $2n \times 2n$ matrix does not apply to weighted graphs \citep{bordenave2020detection,stephan2022non}, as the Ihara-Bass formula becomes more complex and requires an additional parameter \citep{benaych2020spectral}.

Several $n\times n$ Hermitian matrices, such as the self-avoiding matrix \citep{massoulie2014community}, the distance matrix \citep{stephan2019robustness}, and the graph powering matrix \citep{abbe2020graph}, have been shown to reach the Kesten–Stigum threshold with spectral methods. However, each of these approaches requires setting a hyper-parameter $\ell = c \log n$ to transform the adjacency matrix $A$ into a new matrix $A^{(\ell)}$, which is essentially a modified version of \(A^\ell\). The preprocessing step needed to compute $A^{(\ell)}$ has a time complexity of \(O(n^{1+\kappa})\) for some constant \(\kappa > 0\), rendering these methods impractical for large-scale problems.

This raises a natural question: is there a solution that offers the best of both worlds?
\begin{center} \textit{Is there an efficient spectral method with an $n \times n$ Hermitian matrix that performs as well as the non-backtracking matrix for community detection?}
\end{center}

\paragraph{The Bethe-Hessian matrix}
Such an operator exists in the statistical physics literature, known as the \textit{Bethe-Hessian matrix}, proposed by Saade, Krzakala, and Zdeborová in \cite{saade2014spectral}. This operator is a linear combination of the adjacency matrix
$A$ and the diagonal degree matrix
$D$, with one parameter $t\in \dR$ given by:
\begin{equation}\label{eq:def_H}
  H(t):=t^2-t A+(D-I).
\end{equation}
$H(t)$ is also called a deformed graph Laplacian \citep{grindrod2018deformed}.
In the context of SBM, the parameter $t$ is chosen as $t=\pm\sqrt{d}$ in \cite{saade2014spectral}, where $d$ is the average expected degree, and it can be estimated by the empirical mean degree. It was conjectured in \cite{saade2014spectral} that the number of negative outlier eigenvalues of $H(\pm \sqrt d)$ can consistently estimate the number of assortative and disassortative communities respectively above the Kesten-Stigum threshold and the eigenvectors associated with these negative outliers can be used to detect the communities.

\begin{figure}
  \centering
  \includegraphics[width=0.7\textwidth]{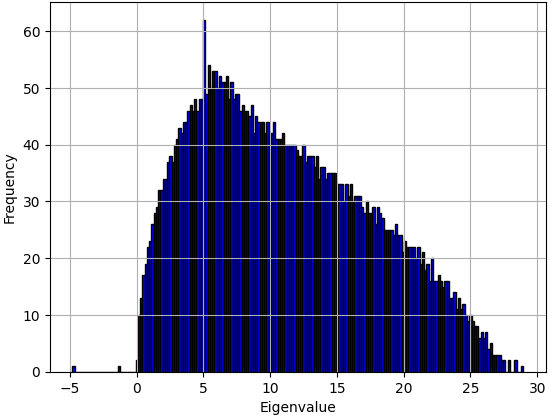}
  \caption{Eigenvalue distributions of the Bethe-Hessian matrix $H(\sqrt{d})$ for a 2-block SBM, where $n=4000,P_{11}=P_{22}=10$, and $P_{12}=P_{21}=2$. There are two negative outliers.}
  \label{fig:NB_BH}
\end{figure}

The Bethe-Hessian method is perhaps the simplest algorithm conjectured to achieve the Kesten-Sitgum threshold in the SBM. Since the pioneering work in \cite{saade2014spectral}, it has been widely used in spectral clustering \citep{dall2019revisiting,dall2021nishimori,dall2021unified}, change point detection \citep{hwang_bethe_2022}, and estimating the number of communities in networks \citep{le2022estimating,hwang2023estimation,shao2024determining}.  The popular network analysis package NetworkX  \citep{SciPyProceedings_11}, includes this algorithm.

From a random matrix theory perspective, the Bethe-Hessian matrix introduces a new phenomenon not explained by existing theory. Results in sparse random matrices  \citep{alt2021delocalization,alt2021extremal,alt2023poisson,ducatez2023spectral} suggest that the Hermitian matrix $H(\pm\sqrt d)$ should have many outlier eigenvalues and suffer from eigenvector localization. However,  as shown in Figure~\ref{fig:NB_BH}, there is  \textit{separation between informative and uninformative outliers}.  The localization effect does appear in $H(\pm\sqrt d)$: there are many large positive eigenvalues, and their corresponding eigenvectors are localized on high-degree vertices. The striking phenomenon on the negative real line, however, still lacks a theoretical explanation:
\begin{center}
  \textit{Why do the negative outlier eigenvalues and eigenvectors in $H(\pm \sqrt d)$  not suffer from localization and still contain  community information?}
\end{center}

\paragraph{Our contribution}
We take a first step toward explaining this phenomenon and theoretically justifying the Bethe-Hessian spectral method in \cite{saade2014spectral} when the expected degree of the SBM is constant or grows with $n$ arbitrarily slowly. Specifically, we establish the following results for the Bethe-Hessian matrix under the SBM:
\begin{itemize}
  \item When the expected degree $d\geq 2$, we show that above the Kesten-Stigum threshold, negative outliers of $H(\pm \sqrt{d})$ consistently estimate the number of communities in the SBM, which confirms a conjecture in \cite{saade2014spectral} for $d\geq 2$; see Theorem~\ref{thm:count}.
  \item When $d$ is sufficiently large, we can approximate the eigenvalue locations (see Theorem~\ref{thm:location_infty}) and eigenvectors (see Theorem~\ref{thm:eigenspace}) of the Bethe-Hessian above the Kesten-Stigum threshold. This characterization allows us to show that a spectral algorithm achieves weak recovery in the SBM; see Theorem~\ref{thm:weak_recovery}.
  \item When $d\to\infty$, the eigenvalue and eigenvector bounds of Theorems \ref{thm:location_infty} and \ref{thm:eigenspace} become even sharper, which allows our algorithm to achieve weak consistency (see Corollary \ref{cor:weak_consistency}). This provides a provable algorithm to achieve consistency without the degree regularization step (removing high-degree vertices).
  \item Along the way to proving these results, we also show that in the SBM, all outlier eigenvalues of the non-backtracking matrix are real (see Theorem~\ref{thm:real_eigenvalues}). This phenomenon has been empirically observed, but no justification exists in the previous literature.
\end{itemize}

To analyze the performance of the Bethe-Hessian spectral algorithms, we establish new connections between the non-backtracking matrix and the Bethe-Hessian matrix. Roughly speaking, the negative eigenvalues and eigenvectors of the Bethe-Hessian matrix approximately perform the function of a non-backtracking operator.

A key component in our argument is to show that the subspace spanned by the informative eigenvectors of $H(\pm \sqrt{d})$ can be approximated by the subspace spanned by eigenvectors of the $2n \times 2n$ reduced non-backtracking matrix. Therefore, results on the spectra of the non-backtracking matrix \citep{bordenave2018nonbacktracking,stephan2022sparse} can be used to approximate the eigenvalues and eigenvectors of the Bethe-Hessian $H(\pm \sqrt{d})$. These approximations are quantified using the Courant minimax principle, perturbation analysis of Hermitian matrices, and a continuity argument based on the Ihara-Bass formula. Additionally, we require a precise asymptotic analysis of the overlap between eigenvector components of the reduced non-backtracking matrix $\tilde{B}$, achieved through a quantitative version of local weak convergence for sparse random graphs studied in \cite{stephan2022sparse}.

\paragraph{Related work} There are only a few rigorous results for the Bethe-Hessian matrix in the literature. \cite{le2022estimating} showed that if the expected degree is $\omega(\log n)$ in an SBM, one can consistently estimate the number of assortative communities (those associated with positive eigenvalues of the expected adjacency matrix) and the locations of the negative eigenvalues of the Bethe-Hessian. \cite{hwang2023estimation} extended the range of consistent estimation down to $d = \omega(1)$; however, their proof technique does not provide any information on the location of the negative eigenvalues of the Bethe-Hessian. Very recently, \cite{mohanty2024robust} showed that above the Kesten-Stigum threshold, if the SBM has $k$ blocks and the probability matrix has $k-1$ repeated eigenvalues, then there are at least $k-1$ and at most $k$ negative eigenvalues of $H(\lambda)$ for a properly chosen $\lambda$. We provide a more detailed comparison with existing results in Section~\ref{sec:discuss}.

Different choices of the parameter $t$ for the Bethe-Hessian were proposed in \citet{dall2019revisiting,dall2021nishimori,dall2021unified}, where the authors empirically demonstrated improved performance on spectral clustering in the SBM compared to $H(\pm \sqrt{d})$. This is closely connected to an intriguing eigenvalue insider phenomenon \citep{dall2019revisiting,coste2021eigenvalues} for the non-backtracking matrix. Beyond community detection, the Bethe-Hessian matrix appears in different forms in various problems, including matrix completion \citep{saade2015matrix} and phase retrieval \citep{maillard2022construction}. A higher-order generalization of the Bethe-Hessian called the Kikuchi-Hessian \citep{saade2016spectral}, was introduced to study the computational-to-statistical gap in tensor PCA \citep{wein2019kikuchi}.

Our work provides a theoretical justification of the Bethe-Hessian method in the statistical physics literature. We expect this simple method can be adapted to develop new algorithms beyond the classical SBM (e.g., random graphs with random weights \citep{stephan2022non}, hypergraphs \citep{stephan2022sparse}, graphon estimation \citep{abbe2023learning}) and community detection with additional constraints (e.g., robustness \citep{stephan2019robustness,mohanty2024robust} and privacy \citep{mohamed2022differentially}). Moreover, we expect that this method can be adapted to provable and efficient algorithms for matrix and tensor completion with very few samples \citep{bordenave2020detection,stephan2024non}; an example of such a problem where the Bethe Hessian has shown promise can be found in \citep{saade2015matrix}.

\paragraph{Organization of the paper}
The rest of the paper is organized as follows. In Section~\ref{sec:prelim_model}, we define the stochastic block model and introduce model parameters. Section~\ref{sec:main} introduces the main results for the bounded and growing degree regimes followed by a discussion. Section~\ref{sec:prelim} collects preliminary results on $H(t)$ and the non-backtracking matrix. In Section~\ref{sec:negative}, we study negative eigenvalues of $H(t)$. The analysis of eigenvectors of $H(t)$ in the high-degree regime is given in Section~\ref{sec:high}. Section~\ref{sec:weak_recovery} contains proofs for weak recovery.
In Appendix~\ref{sec:app:perturbation}, we include auxiliary results for the spectral stability of Hermitian matrices. Appendix~\ref{sec:app:cor} includes the proof of Corollary~\ref{cor:hatd}.

\section{Preliminaries}\label{sec:prelim_model}

\paragraph{Stochastic Block Model}
We work under a general SBM setting as follows.
\begin{definition}[Stochastic block model (SBM)]\label{def:SBM}
  Given an $r\times r$ symmetric nonnegative matrix $P$. We generate a random graph with $n$ vertices in the following way. Let $\sigma: [n] \to [r]$ be the label assignment of each vertex.  Then $i,j$ are connected independently with probability $\frac{1}{n} P_{\sigma(i),\sigma(j)}$.
\end{definition}

We have the following  model parameters:
\begin{itemize}
  \item Denote the block size ratio by $$\pi_k=\frac{|\{i\in [n], \sigma(i)=k \} |}{n}.$$
  \item We assume constant average degree:
    $$d:=\sum_{j\in [r]}P_{ij} \pi_j>1, \quad \forall i\in [r].$$
    When $d<1$, the random graph has no giant component, and detection is impossible.
  \item The signal matrix  $Q=P\Pi$, where $\Pi=\mathrm{diag}({\pi})$ is a diagonal matrix. Since $Q$ is similar to $\Pi^{1/2} P \Pi^{1/2}$, its eigenvalues are real and we order them by absolute value:
    $$|\mu_r|\leq |\mu_{r-1}|\leq \cdots \leq \mu_1=d.$$
  \item Let $\psi_i,1\leq i\leq r$ be the orthonormal eigenvectors of $\Pi^{1/2} P \Pi^{1/2}$. Eigenvectors of $Q$ are given by $\phi_i:=\Pi^{-1/2}\psi_i, 1\leq i\leq r$. Hence
    \begin{align}
      \langle \phi_i,\phi_j\rangle_{\pi}:=\sum_{k\in [r]}\pi_k\phi_i(k)\phi_j(k)=\langle \psi_i,\psi_j\rangle=\delta_{ij}.
    \end{align}
  \item The informative eigenvalues are defined by $\mu_i$ such that
    \begin{align}\label{eq:KS}
      \mu_i^2>d.
    \end{align}
    Let $r_0$ be the number of informative eigenvalues, such that $\mu_{r_0}^2>d\geq \mu_{r_0+1}^2$; they are split between $r_+$ positive and $r_-$ negative eigenvalues, denoted
    \begin{align} \label{eq:mu_order}
      \mu_1^+ \geq \dots \geq \mu_{r_+}^+ > \sqrt{d} \quand \mu_1^- \leq \dots \leq \mu_{r_-}^- < -\sqrt{d} .
    \end{align}
  \item Denote \[\tau_i=\frac{d}{\mu_i^2}\] to be the inverse signal-to-noise ratio associated with $\mu_i$, and similarly, $\tau_{i}^{\pm}:=\frac{d}{\left(\mu_i^{\pm}\right)^2}$. The Kesten-Stigum condition \eqref{eq:KS} thus implies that $\tau_i\in (0,1)$ for $i\in [r_0]$.
\end{itemize}

\paragraph{Overlap and recovery regimes} Given an estimate $\hat\sigma$ of the community assignment $\sigma$, the \emph{overlap} between the two vectors is defined as
\[ \operatorname{ov}(\sigma, \hat\sigma) = \max_{\mathfrak{p} \in \mathfrak{S}_r} \frac1n \sum_{x=1}^n \mathbf{1}\{\hat\sigma(x) = \mathfrak{p} \circ \sigma(x)\}, \]
where the maximum is over all permutations of $[r]$. Following the nomenclature of \cite{abbe2018community}, we say that an estimator $\hat\sigma$ (asymptotically) achieves:
\begin{itemize}
  \item \emph{weak recovery} if $\operatorname{ov}(\sigma, \hat\sigma) = \max_i \pi_i + \delta$ for some $\delta > 0$,
  \item \emph{almost exact recovery} (or \emph{weak consistency}) if $\operatorname{ov}(\sigma, \hat\sigma) = 1 - o(1)$.
\end{itemize}
The baseline performance of $\max_i \pi_i$ corresponds to the dummy estimator that assigns every vertex to the largest community.

\paragraph{Non-backtracking matrix}
Let $G=(V,E)$ be a graph. For $V=[n]$, the $(i,j)$-th entry of the \textit{adjacency matrix} $A$ of a graph $G$ is defined as
\[ A_{ij}=
  \begin{cases}
    1  &  \text{if } \{i,j\}\in E,\\
    0 & \text{otherwise.}
\end{cases}\]
The degree matrix $D$ of a graph $G$ is a diagonal matrix where \[D_{ii}=\sum_{j\in V} A_{ij}.\]
Define the oriented edge set $\vec{E}$ for $G$ as
\begin{align} \label{eq:def_oriented}
  \vec{E}= \{ (i,j): \{ i,j\} \in E\}.
\end{align}
Each edge yields two oriented edges; therefore, $|\vec{E}| =2|E|$. The \textit{non-backtracking operator} $B$ of $G$ is a non-Hermitian operator of size $|\vec E| \times |\vec E|$. For any $(u,v), (x,y)\in \vec{E}$, the corresponding entry of $B$ is defined as $$
B_{(u,v),(x,y)} =
\begin{cases}
  1 &\text{$v=x$, $u \neq y$}, \\
  0 &\text{otherwise.}
\end{cases}
$$
A useful identity we will use in this paper is the following Ihara-Bass formula \citep{bass_iharaselberg_1992}:
\begin{lemma}[Ihara-Bass formula]\label{lem:Ihara-Bass} For any graph $G=(V,E)$, and any $z\in \mathbb C$, the following identity holds:
  \begin{align}\label{eq:Ihara}
    \det(B-zI)=(z^2-1)^{|E|-n} \det (z^2I -zA+D-I).
  \end{align}
\end{lemma}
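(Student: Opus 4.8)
The plan is to prove the identity by the classical linear-algebra argument: encode the non-backtracking structure through edge-incidence matrices, build one $(n+|\vec{E}|)\times(n+|\vec{E}|)$ block matrix, compute its determinant in two ways with the Schur-complement formula, and then rescale $z\mapsto z^{-1}$ to match the exact normalization in \eqref{eq:Ihara}.

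First I would set up the dictionary between $A$, $D$, $B$ and the edge data. Let $S,T\in\{0,1\}^{n\times|\vec{E}|}$ be the source and target incidence matrices, $S_{u,(i,j)}=\ind\{u=i\}$ and $T_{u,(i,j)}=\ind\{u=j\}$, and let $J$ be the orientation-reversing involution on $\mathbb{C}^{\vec{E}}$, $J_{(i,j),(k,\ell)}=\ind\{(k,\ell)=(j,i)\}$. A direct computation gives
$$ ST^\top=A,\qquad SS^\top=TT^\top=D,\qquad SJ=T,\qquad T^\top S=B+J,\qquad J^2=I, $$
where the key identity $T^\top S=B+J$ records that the relation ``head of the first edge equals tail of the second edge'' differs from a genuine non-backtracking step by exactly the single reversal move. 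Since $J$ is a product of $|E|$ disjoint transpositions, $\det(I+zJ)=(1-z^2)^{|E|}$, and $(I+zJ)^{-1}=(1-z^2)^{-1}(I-zJ)$ whenever $z^2\ne1$.

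Next I would consider the block matrix
$$ \mathcal{M}(z)=\begin{pmatrix} I_n & S \\ z\,T^\top & I_{|\vec{E}|}+zJ \end{pmatrix}, $$
and compute $\det\mathcal{M}(z)$ twice. Eliminating the top-left block $I_n$ gives $\det\mathcal{M}(z)=\det(I+zJ-z\,T^\top S)=\det(I-zB)$, using $T^\top S=B+J$. Eliminating the (invertible, for $z^2\ne1$) bottom-right block and using $ST^\top=A$ together with $SJT^\top=TT^\top=D$ gives
$$ \det\mathcal{M}(z)=(1-z^2)^{|E|}\det\!\Big(I_n-\tfrac{z}{1-z^2}(A-zD)\Big)=(1-z^2)^{|E|-n}\det\!\big(I_n-zA+z^2(D-I_n)\big). $$
Equating the two expressions proves $\det(I-zB)=(1-z^2)^{|E|-n}\det(I_n-zA+z^2(D-I_n))$ for $z^2\ne1$; both sides are polynomials in $z$, so the identity extends to all $z\in\mathbb{C}$ by continuity. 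Finally, substituting $z\mapsto z^{-1}$ and multiplying through by $z^{2|E|}$ turns the left side into $\det(B-zI)$ and the right side into $(z^2-1)^{|E|-n}\det(z^2I-zA+D-I)$, which is exactly \eqref{eq:Ihara}.

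The step I would be most careful about is the construction of $\mathcal{M}(z)$ and the bookkeeping around it: the $zJ$ correction in the lower-right block is precisely what makes both Schur complements collapse cleanly — it cancels the spurious $J$ coming from $T^\top S=B+J$ on one side, while through $SJT^\top=D$ it produces the $z^2D$ term on the other. Everything else (verifying the incidence identities for a simple graph, the eigenvalues of $J$, and tracking the $(1-z^2)$ versus $(z^2-1)$ factors and the power of $z$ after the substitution) is routine but must be done with attention to signs.
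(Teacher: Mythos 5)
Your proof is correct. Note that the paper itself gives no proof of this lemma --- it is quoted as a known identity with a citation to \citep{bass_iharaselberg_1992} --- so there is no internal argument to compare against; what you have written is essentially the classical incidence-matrix proof of Bass (in its streamlined Schur-complement form): the identities $ST^\top=A$, $SS^\top=TT^\top=D$, $SJ=T$, $T^\top S=B+J$, and $\det(I+zJ)=(1-z^2)^{|E|}$ all check out for a simple graph, the two eliminations of $\mathcal{M}(z)$ give $\det(I-zB)$ and $(1-z^2)^{|E|-n}\det\bigl(I-zA+z^2(D-I)\bigr)$ respectively, and the substitution $z\mapsto z^{-1}$ followed by multiplication by $z^{2|E|}$ (using that $B$ has even size $2|E|$, so $\det(zI-B)=\det(B-zI)$) yields exactly the normalization in \eqref{eq:Ihara}. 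The only points worth stating explicitly in a final write-up are the exceptional values: the bottom-right elimination requires $z^2\neq 1$ and the rescaling requires $z\neq 0$, but both sides of the resulting identity are rational functions of $z$ agreeing off a finite set, so the identity extends, as you indicate.
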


\paragraph{Reduced non-backtracking matrix and the Bethe-Hessian}

Consider the block matrix
\begin{align}\label{eq:tildeB}
  \tilde B=
  \begin{bmatrix}
    0 &  D-I\\
    -I  & A
  \end{bmatrix} \in \mathbb R^{2n\times 2n}.
\end{align}
Then from \eqref{eq:Ihara}, we have
\begin{align}\label{eq:det_tildB}
  \det(B-zI)=(z^2-1)^{|E|-n}\det (\tilde B-zI).
\end{align}
The identity \eqref{eq:det_tildB} implies that $B$ and $\tilde B$ share the same spectrum, up to the multiplicity of trivial eigenvalues $\pm 1$. We also call $\tilde B$ the \textit{reduced non-backtracking matrix} of $G$.

Let $v_i=
\begin{bmatrix}
  x_i\\
  y_i
\end{bmatrix}$ be an eigenvector of $\tilde B$ with associated eigenvalue $\lambda_i := \lambda_i(\tilde B)$, where $\lambda_i$ is also the corresponding eigenvalue of $B$ and $x_i,y_i\in \dC^n$.  We normalize $y_i$ such that $\|y_i\|_2=1$.
By solving \[\tilde Bv_i=\lambda_i v_i,\] we obtain the following relation:
\begin{align}\label{eq:BHeq}
  (D-I)y_i&=\lambda_ix_i,\\
  -x_i+Ay_i&=\lambda_i y_i. \label{eq:Bheq1}
\end{align}
In particular, $H(\lambda_i)y_i = 0$, so the $y_i$ correspond to the zero eigenvectors of the Bethe-Hessian of $G$.

When $G$ is a regular graph, one can show that $x_i,y_i$ are perfectly aligned \cite{zhu2023non}. As we will see in Section~\ref{sec:yi}, this is not the case for the SBM, but one can compute the asymptotic values of $\langle x_i,y_i\rangle$.
Spectral clustering with the eigenvectors of $\tilde B$ was first proposed in \cite{krzakala2013spectral}. This was rigorously justified in \cite{bordenave2018nonbacktracking}, with the correspondence made explicit in \cite{stephan2022sparse}, showing that $y_i, i\in [r_0]$ are correlated with eigenvectors of $Q$.

\paragraph{Notations} For $n\times n$ Hermitian matrices $A$, we order eigenvalues in decreasing order: $\lambda_1(A)\geq \cdots \geq \lambda_n(A)$. For any $n\times n$ non-Hermitian matrix $B$, we order its eigenvalues in decreasing order for the modulus: $|\lambda_1(B)|\geq \cdots \geq |\lambda_n(B)|$.

We say that an event $E_n$ happens \emph{with high probability} if $\lim_{n \to \infty} \Pb{E_n} = 1$; unless otherwise specified, in this article all high-probability events happen with probability at least $1 - e^{-c\log_d(n)}$, where $d$ is the average expected degree.

\section{Main results}\label{sec:main}

\subsection{Bounded expected degree regime} \label{sec:sparse}
In this section, we make the following assumption:
\begin{assumption}\label{assumption:fixed}
  The parameters $\pi_i, P_{ij}$ (and hence $d, r$ and $r_0$) are independent of $n$.
\end{assumption}
Our first result is an almost complete characterization of the relationship between the informative eigenvalues of $Q$ and the negative eigenvalues of $H(\pm\sqrt{d})$.
For $t \geq 0$ and $\eps > 0$, we define
% $r_+(t)$ (resp. $r_-(t)$) the number of eigenvalues of $Q$ above (resp. below) $t$, and
$N_{\varepsilon}(t)$ the number of eigenvalues of $H(t)$ below $-\varepsilon$, and $N(t)$ be the number of negative eigenvalues of $H(t)$.

\begin{theorem}[Estimating the number of communities]\label{thm:count}
  Assume that the average degree satisfies $d\geq 2$ and Assumption~\ref{assumption:fixed} holds. For any $\eps > 0$ small enough, the following holds with high probability for sufficiently large $n$:
  %     \begin{equation}
  %         N_{\eps}(t) = N(t + \eps) = r_+(t) \quand N_{\eps}(-t) = N(-t - \eps) = r_-(t)
  %     \end{equation}
  % \yz{for simplicity, we don't need to state first two results }
  %  In particular, taking $t = \sqrt{d}$, we have
  \begin{equation}
    N_{\eps}(\sqrt{d}) = N(\sqrt{d} + \eps) = r_+ \quand N_{\eps}(-\sqrt{d}) = N(-\sqrt{d} - \eps) = r_-.
  \end{equation}
  The above is also valid when taking $\eps = \eps_n = \log(n)^{-c}$ for any $c >0$.
\end{theorem}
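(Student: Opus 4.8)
The plan is to read off $N(t)$ from the spectrum of the reduced non-backtracking matrix $\tilde B$ in \eqref{eq:tildeB}. Combining the Ihara--Bass formula (Lemma~\ref{lem:Ihara-Bass}) with \eqref{eq:det_tildB} yields the identity $\det H(t) = \det(\tilde B - tI)$ for all $t$. Since $H(t)$ is real symmetric for real $t$, this forces $N(t)$ to be a step function of $t$: it is constant on every open interval free of real eigenvalues of $\tilde B$ (no eigenvalue of $H(t)$ can cross $0$ without $\det H(t)$ vanishing), and $N(t) = 0$ for $|t|$ large since then $H(t) \succ 0$. So the whole function is determined once we know (a) which real numbers are eigenvalues of $\tilde B$, and (b) the direction of each jump. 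For (a) I would invoke the non-backtracking spectral analysis of \cite{bordenave2018nonbacktracking,stephan2022sparse}: with high probability $\tilde B$ has exactly $r_0$ outlier eigenvalues, all real, at $\mu_i + o(1)$, with eigenvector components $(x_i,y_i)$ obeying \eqref{eq:BHeq}--\eqref{eq:Bheq1}, while every remaining eigenvalue has modulus at most $\sqrt d + \delta_n$ for some $\delta_n = o(1)$. Under Assumption~\ref{assumption:fixed} the gaps $(\mu_i^\pm)^2 - d$ are fixed positive constants, so for large $n$ neither $(\sqrt d + \delta_n,\mu_{r_+}^+)$ nor $(\mu_{r_-}^-,-\sqrt d - \delta_n)$ contains a real eigenvalue of $\tilde B$.

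For (b), near an informative eigenvalue $t = \mu_i$ the matrix $H(\mu_i)$ has the simple kernel vector $y_i$ (by \eqref{eq:BHeq}), and first-order perturbation theory gives the velocity of the corresponding eigenvalue of $H(\cdot)$ through $0$: $\frac{d}{dt}\lambda(H(t))\big|_{t=\mu_i} = y_i^\top(2\mu_i I - A)y_i = 2\mu_i - \langle Ay_i,y_i\rangle$. Using \eqref{eq:BHeq}--\eqref{eq:Bheq1}, $\langle Ay_i,y_i\rangle = \mu_i + \langle x_i,y_i\rangle$ and $\langle x_i,y_i\rangle = \mu_i^{-1}\langle(D-I)y_i,y_i\rangle$, so the sign of the crossing equals $\operatorname{sign}(\mu_i)$ provided $\langle(D-I)y_i,y_i\rangle < \mu_i^2$. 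Here is where I would use that the informative non-backtracking eigenvectors are not localized on high-degree vertices: the quantity $\langle(D-I)y_i,y_i\rangle = \sum_x (d_x-1)|y_i(x)|^2$ should concentrate around $d-1$, which is $< d < \mu_i^2$ by the Kesten--Stigum condition. Granting this, each crossing at a positive $\mu_i^+$ raises $N$ as $t$ decreases and each crossing at a negative $\mu_i^-$ raises $N$ as $t$ increases, so sweeping $t$ from $\pm\infty$ toward $\pm\sqrt d$ gives $N(t) = r_+$ throughout $(\sqrt d + \delta_n,\mu_{r_+}^+)$ and $N(t) = r_-$ throughout $(\mu_{r_-}^-,-\sqrt d - \delta_n)$. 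In particular $N(\sqrt d + \eps) = r_+$ and $N(-\sqrt d - \eps) = r_-$ for every $\eps > \delta_n$, covering both fixed $\eps$ and $\eps_n = (\log n)^{-c}$.

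It remains to trade $N(\sqrt d + \eps)$ for $N_\eps(\sqrt d)$ (and likewise at $-\sqrt d$). For $N_\eps(\sqrt d) \ge r_+$, a short computation from $H(\mu_i^+)y_i = 0$ gives $H(\sqrt d)y_i = (\mu_i^+ - \sqrt d)(x_i - \sqrt d\,y_i)$, whence $\langle H(\sqrt d)y_i,y_j\rangle = (\mu_i^+ - \sqrt d)(\langle x_i,y_j\rangle - \sqrt d\langle y_i,y_j\rangle)$; feeding in the asymptotic overlaps $\langle x_i,y_j\rangle$, $\langle y_i,y_j\rangle$ should show that $v\mapsto\langle H(\sqrt d)v,v\rangle$ is $\le -c\|v\|^2$ on $\operatorname{span}(y_1,\dots,y_{r_+})$ for a fixed $c>0$, so by the Courant--Fischer principle $H(\sqrt d)$ has at least $r_+$ eigenvalues $\le -c < -\eps$. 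For $N_\eps(\sqrt d) \le r_+$, I would write $H(\sqrt d) = H(\sqrt d + \delta_n) + \delta_n A - (2\sqrt d\,\delta_n + \delta_n^2)I$: since $N(\sqrt d + \delta_n) = r_+$ with its negative eigenvalues bounded away from $0$ by a fixed constant, Weyl's inequality gives $\lambda_{n-r_+}(H(\sqrt d)) \ge -O(\delta_n\|A\|) \ge -\eps_n$ as soon as $\delta_n\|A\| = o(\eps_n)$. Because $\|A\| = \Theta(\sqrt{\log n/\log\log n})$ for the sparse graph, this forces the quantitative bound $\delta_n \le (\log n)^{-c'}$ on the bulk of $\tilde B$ (a crude perturbation across a \emph{fixed} change of $t$ is hopeless since $\|A\|\to\infty$), which is the reason the statement is phrased with $\eps_n = (\log n)^{-c}$.

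The main obstacle is the input just used: the precise asymptotic analysis of the non-backtracking eigenvector overlaps $\langle x_i,y_j\rangle$ and $\langle y_i,y_j\rangle$ — especially the non-localization estimate $\langle(D-I)y_i,y_i\rangle = d-1+o(1)$, which pins down the sign of the eigenvalue crossings — together with the quantitative bulk bound $\delta_n \le (\log n)^{-c'}$ for $\tilde B$; both rest on a quantitative form of local weak convergence for sparse random graphs in the spirit of \cite{stephan2022sparse}. A secondary point is repeated eigenvalues of $Q$: there the outliers of $\tilde B$ cluster, and the crossing count and the Courant--Fischer step must be carried out with the correct multiplicities, with the Gram matrix of the $y_i$'s shown to be non-degenerate in the limit.
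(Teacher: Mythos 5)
Your overall architecture is close to the paper's: kernel vectors $y_i$ of $H(\lambda_i)$ coming from the reduced non-backtracking matrix, a Courant--Fischer step on $\operatorname{span}(y_1,\dots,y_{r_+})$ for the lower bound, a Weyl comparison between $H(\sqrt d)$ and $H(\sqrt d+\eps)$ requiring the quantitative bulk bound, and overlap asymptotics from local weak convergence as the main input. Your one genuinely different ingredient is replacing the paper's determinant-multiplicity continuity argument (Lemma~\ref{lem:lower_bound}, adapted from \cite{mohanty2024robust}) by a sweep of $N(t)$ with crossing directions computed by first-order perturbation theory; that is a legitimate alternative and would work once the inputs are in place. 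However, there are two genuine gaps. First, your guessed overlap value is wrong: the local weak limit gives $\langle y_i,(D-I)y_i\rangle = d+1-\tau_i+o(1)$, equivalently $\langle x_i,y_i\rangle = \frac{d+1-\tau_i}{\mu_i}+o(1)$ (see \eqref{eq:scal_udu}, \eqref{eq:scal_xy}), not $d-1+o(1)$. This matters because your Courant--Fischer step at $t=\sqrt d$ needs the second root of the quadratic form, $\frac{d+1-\tau_i}{\mu_i}$, to lie strictly below $\sqrt d$ for every $\mu_i$ with $\mu_i^2>d$; with your value $d-1$ this would hold for all $d>1$, whereas with the correct value it holds precisely when $d\ge 2$ (this is \eqref{eq:numerical1}, and it is the only place the hypothesis $d\ge2$ is used). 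As written, your proposal never identifies where $d\ge2$ enters, and the fact that it "proves" the $d\in(1,2)$ case — which the paper explicitly leaves open for $N_\eps(\sqrt d)$ — is the symptom of this error. (Your crossing-direction criterion $\langle(D-I)y_i,y_i\rangle<\mu_i^2$ is weaker and does hold for all $d>1$, which is why the sweep part survives; the sharper inequality is needed only at $t=\sqrt d$.)

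Second, you cite \cite{bordenave2018nonbacktracking,stephan2022sparse} for the statement that the $r_0$ outliers of $\tilde B$ are exactly real, but those works only show they lie within $o(1)$ of the real axis. Exact realness is not optional in your argument: if an outlier had nonzero imaginary part, $H(t)$ would never be singular for real $t$ near $\mu_i$, $N(t)$ would not jump there, and both your sweep and your kernel vectors $y_i$ would disappear. The paper has to prove this realness as a new result (Theorem~\ref{thm:real_eigenvalues}), and does so precisely via the overlap identity $\langle x_i,y_i\rangle=\frac{d+1-\tau_i}{\mu_i}$ that you mis-stated — so the same missing computation underlies both gaps. To repair the proposal you would need to carry out the overlap calculation correctly (as in Propositions~\ref{prop:scal_u} and Corollary~\ref{cor:inner_product}), deduce realness of the $\lambda_i$ and of the $y_i$, and then insert the inequality $\frac{d+1-\tau_i}{\mu_i}<\sqrt d$ for $d\ge2$ into your Courant--Fischer step; with those pieces your sweep-based upper bound and the Weyl step (which matches the paper's Lemma~\ref{lem:lower_bound}~(iv) in substance) go through.
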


Theorem~\ref{thm:count} rigorously justifies the prediction in \cite{saade2014spectral} for general stochastic block models when $d\geq 2$ for estimating the number of communities in the SBM, and extends the previous results from $d\to\infty$ in \cite{le2022estimating,hwang2023estimation} to fixed $d$. It remains open to show the consistency of the Bethe-Hessian estimator for a relatively narrow range of $d\in (1,2)$.

In fact,  the Bethe-Hessian method is parameter-free since $d$ can be estimated by the sample mean degree
\begin{align}\label{eq:hatd}
  \hat{d}:=\frac{1}{n}(d_1+\dots+d_n),
\end{align}
where $d_i$ is  the degree of vertex $i\in [n]$, which yields the following corollary:
\begin{corollary}\label{cor:hatd}
  Let $\eps_n = \log(n)^{-c}$ for any $c>0$. For sufficiently large $n$, with  high probability,
  \begin{align}
    N_{\varepsilon_n}\left(\sqrt{\hat d}\right)=N\left(\sqrt{\hat d}+\varepsilon_n\right)=r_+, \quad N_{\varepsilon_n}\left(-\sqrt{\hat d}\right)=N\left(-\sqrt{\hat d}-\varepsilon_n\right)=r_-.
  \end{align}
\end{corollary}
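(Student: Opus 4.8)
The plan is to deduce the corollary from Theorem~\ref{thm:count} by a perturbation argument, replacing $\sqrt d$ with $\sqrt{\hat d}$ at a cost that is negligible compared to the window $\eps_n = \log(n)^{-c}$. The key quantitative input is a concentration bound for the empirical mean degree: under Assumption~\ref{assumption:fixed}, $n\hat d = d_1 + \dots + d_n$ is (essentially) a sum of independent $\mathrm{Bin}$ variables with total mean $nd$, so a Bernstein/Chernoff estimate gives $|\hat d - d| \lesssim \sqrt{d \log(n) / n}$ with probability at least $1 - n^{-C}$, and in particular $|\hat d - d| = o(\eps_n^2)$ for every fixed $c$. From this, $|\sqrt{\hat d} - \sqrt d| = |\hat d - d| / (\sqrt{\hat d} + \sqrt d) \lesssim n^{-1/2}\sqrt{\log n}$, which is $o(\eps_n)$; call this bound $\rho_n$. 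On the high-probability event where this holds, we will freely use the inclusion $[\,\sqrt d - \rho_n,\ \sqrt d + \rho_n\,] \ni \sqrt{\hat d}$ (and likewise for the negative sign $-\sqrt d$).

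Next I would quantify how the relevant eigenvalue counts of $H(t)$ change as $t$ moves by $\rho_n$. Since $H(t) = t^2 - tA + (D-I)$ depends smoothly on $t$ and $\|A\|, \|D\| = O(\mathrm{poly}\log n)$ with high probability (the high-degree vertices give a polylogarithmic operator-norm bound in the sparse regime), we have $\|H(t) - H(t')\|_{\mathrm{op}} \le |t-t'|\big(|t|+|t'| + \|A\|\big) = O(\rho_n \cdot \mathrm{poly}\log n)$ for $t,t'$ in a bounded neighborhood of $\pm\sqrt d$. By Weyl's inequality, every eigenvalue of $H(\sqrt{\hat d})$ lies within $O(\rho_n\,\mathrm{poly}\log n)$ of the corresponding eigenvalue of $H(\sqrt d)$. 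Set $\eps_n' := \eps_n/2$; since $\rho_n\,\mathrm{poly}\log n = o(\eps_n)$, Theorem~\ref{thm:count} applied with window $\eps_n'$ gives $N_{\eps_n'}(\sqrt d) = N(\sqrt d + \eps_n') = r_+$, and the Weyl bound transfers this: the number of eigenvalues of $H(\sqrt{\hat d})$ below $-\eps_n$ equals the number of eigenvalues of $H(\sqrt d)$ below $-\eps_n + O(\rho_n\,\mathrm{poly}\log n) \le -\eps_n'$, hence is at least $r_+$; and the number below $-\eps_n$ is at most the number of eigenvalues of $H(\sqrt d)$ below $0 + O(\rho_n\,\mathrm{poly}\log n)$, which by the other half of Theorem~\ref{thm:count} (no eigenvalues in $(-\eps_n', 0]$, i.e. $N(\sqrt d + \eps_n') = N_{\eps_n'}(\sqrt d)$ forces the gap $[-\eps_n', \cdot]$ to be empty below a tiny threshold) is also $r_+$. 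The same argument with $\eps_n$ replaced by a slightly larger window shows $N(\sqrt{\hat d} + \eps_n) = r_+$, and symmetrically for $-\sqrt{\hat d}$ and $r_-$.

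The main obstacle is the spectral-gap bookkeeping in the last step: Theorem~\ref{thm:count} only guarantees that $H(\sqrt d)$ has exactly $r_+$ eigenvalues below $-\eps_n$ and none in the window $(-\eps_n, \eps_n)$ (that is the content of $N_{\eps_n}(\sqrt d) = N(\sqrt d + \eps_n)$), so after a perturbation of size $O(\rho_n\,\mathrm{poly}\log n)$ I must be careful that no eigenvalue from the bulk near $0$ crosses below $-\eps_n$, and that none of the $r_+$ informative eigenvalues crosses above it. Both are handled because the perturbation is $o(\eps_n)$, but to make this airtight one should apply Theorem~\ref{thm:count} with two nested windows — say $\eps_n/2$ and $2\eps_n$ — and sandwich the counts for $H(\sqrt{\hat d})$ between them; since both windows are of the form $\log(n)^{-c'}$, Theorem~\ref{thm:count} applies to each and yields the same value $r_+$, closing the argument. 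The remaining details (union-bounding the degree-concentration event, the norm bound on $A$, and the Weyl estimate against the high-probability event of Theorem~\ref{thm:count}) are routine. A symmetric treatment gives the $r_-$ statements, completing the proof.
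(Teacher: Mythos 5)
Your first steps (Chernoff concentration $|\hat d - d| = O(\sqrt{d\log n/n})$, hence $\|H(\sqrt{\hat d}) - H(\sqrt d)\|$ polynomially small, then Weyl) are exactly how the paper begins, and your nested-window argument does correctly establish the first equality $N_{\eps_n}(\sqrt{\hat d}) = r_+$: compare to $H(\sqrt d)$ and invoke Theorem~\ref{thm:count} at windows $\log(n)^{-c'}$ and $\log(n)^{-c''}$ with $c' < c < c''$, using monotonicity of $N_\eps$ in $\eps$.

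However, there is a genuine gap in your treatment of the second equality $N(\sqrt{\hat d} + \eps_n) = r_+$ (and its negative counterpart). You justify it by claiming that ``the other half of Theorem~\ref{thm:count}'' gives a spectral gap, i.e.\ that $N(\sqrt d + \eps_n') = N_{\eps_n'}(\sqrt d)$ forces $H(\sqrt d)$ (or $H(\sqrt d + \eps_n)$) to have no eigenvalues just above the relevant threshold. That is a misreading: the two counts in the theorem refer to \emph{different matrices} ($H(\sqrt d)$ below $-\eps$ versus $H(\sqrt d + \eps)$ below $0$), so their equality implies no gap statement for either matrix. Concretely, after a Weyl perturbation of size $\delta_n$ you would need (a) that $H(\sqrt d + \eps_n)$ has no eigenvalues in $[0, \delta_n)$, and (b) that its $r_+$ negative eigenvalues lie below $-\delta_n$; neither is contained in the statement of Theorem~\ref{thm:count}, and comparing instead back to $H(\sqrt d)$ costs a factor $\|A\| \asymp \log n$ in the Weyl bound, which destroys the window whenever $c \le 1$. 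The paper's proof avoids this entirely: it does not transfer counts by perturbation of the conclusion, but \emph{re-runs the proof} of Theorem~\ref{thm:count} with $\sqrt d$ replaced by $\sqrt{\hat d}$ — the upper bound comes from Lemma~\ref{lem:lower_bound}\,(i)/(iv) applied directly at $t = \sqrt{\hat d} + \eps_n > \sqrt d + \epsilon_n$ (counting real eigenvalues of $\tilde B$ above $t$), and the lower bound from the quadratic-form estimate of Lemma~\ref{lem:eigen_BH}, which shows $\lambda_{n-r_++1}(H(t)) < 0$ for every $t$ strictly between $(d+1-\tau_{r_+}^+)/\mu_{r_+}^+$ and $\mu_{r_+}^+$, a range that contains $\sqrt{\hat d} + \eps_n$ with high probability. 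To close your argument you must either import those intermediate lemmas at the shifted parameter, as the paper does, or strengthen the black-box statement you rely on; the statement of Theorem~\ref{thm:count} alone is not enough.
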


\subsubsection{Sketch of the proof for Theorem~\ref{thm:count}}

To lower bound the number of negative eigenvalues, we use the Courant-Fisher theorem by showing that $y^\top H(t) y < 0$ for any unit vector $y$ in a specific subspace. The dimension of the subspace provides a lower bound for the number of negative eigenvalues of $H(t)$ without finding their exact eigenvectors.
This subspace is given by known eigenvectors of $H(t)$ at different values of $t$; we take $y_i$ to be in the kernel of $H(\lambda_i)$, where $\lambda_i = \lambda_i(B)$ is the $i$-th eigenvector of the non-backtracking matrix $B$, and the existence of $y_i$ is guaranteed by the Ihara-Bass formula \citep{bass_iharaselberg_1992}. This choice of $y_i$ allows us to compute exactly the quadratic form $y_i^\top H(t)y_i$:
\[ y_i^\top H(t)y_i = \left(t - \frac{\langle y_i, (D-I) y_i}{\lambda_i}\right)(t - \lambda_i), \]
where $D$ is the diagonal degree matrix. It remains to compute both roots of the above polynomial. Classical results on the non-backtracking operator \citep{bordenave2018nonbacktracking} show that $\lambda_i = \mu_i + o(1)$, and the scalar product $\langle y_i, (D-1)y_i \rangle$ can be computed using the local convergence properties of $G$ studied in \cite{stephan2022sparse}. We finally find
\[ y_i^\top H(t) y_i = \left(t - \frac{d + 1 - \tau_i}{\mu_i} \right)(t - \mu_i) + o(1) \quand y_i^\top H(t) y_j = o(1) \text{ for } i\not=j. \]
Whenever $d \geq 2$, the first root is always lower (in magnitude) than $\sqrt{d}$, so the negative eigenspace of $H(t)$ gains a dimension exactly when $t = \mu_i$.
The upper bound is shown via a continuity argument,  similar to the one in \cite{mohanty2024robust}, which connects the negative eigenvalues of $H(t)$ to the real eigenvalue outliers of $B$. The complete proof of Theorem~\ref{thm:count} can be found in Appendix~\ref{sec:proof_count}.

The argument crucially rests on the fact that all outlier eigenvalues of $B$ are real with high probability. While generally accepted as folklore, all previous work on the non-backtracking spectrum of SBMs \citep{bordenave2018nonbacktracking,stephan2022non,stephan2022sparse} only proved that outlier eigenvalues are close to the real line. Our inner product calculations (see Corollary~\ref{cor:inner_product} in the Appendix) allow us to close this gap:
\begin{theorem}\label{thm:real_eigenvalues}
  Let $B$ be the non-backtracking operator of a stochastic block model under Assumption~\ref{assumption:fixed}.   Then with high probability, for any $i\in [r_0]$, $\lambda_i$ is a real eigenvalue of $\tilde B$ and
  \[ \lambda_i=\mu_i+O(n^{-c}), \]
  for some constant $c>0$.  Moreover,  the corresponding eigenvector $
  \begin{bmatrix}
    x_i\\
    y_i
  \end{bmatrix}$ is a  real vector in $\mathbb R^{2n}$.
\end{theorem}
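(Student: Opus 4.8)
The plan is to combine the known result that each outlier eigenvalue $\lambda_i$ of $\tilde B$ lies within $O(n^{-c})$ of the real axis (proved in \cite{bordenave2018nonbacktracking,stephan2022sparse}) with the inner-product computations alluded to in the sketch of Theorem~\ref{thm:count} (Corollary~\ref{cor:inner_product}), in order to upgrade ``close to real'' to ``exactly real''. The key observation is that complex eigenvalues of the \emph{real} matrix $\tilde B$ come in conjugate pairs: if $\lambda_i$ is an outlier with $\operatorname{Im}(\lambda_i) \neq 0$, then $\bar\lambda_i$ is also an eigenvalue of $\tilde B$, and it too is an outlier (since $|\bar\lambda_i| = |\lambda_i| > \sqrt d$), hence equals $\lambda_j$ for some $j \in [r_0]$ with $\mu_j = \mu_i + O(n^{-c})$. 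First I would argue that, by the convergence $\lambda_i = \mu_i + o(1)$ and the \emph{strict} ordering of the informative eigenvalues $|\mu_{r_0}| \le \dots \le |\mu_1| = d$ (distinct values separated by a constant gap once $n$ is large), a conjugate pair $\lambda_i, \bar\lambda_i$ must both converge to the \emph{same} $\mu_i$, and moreover any $\mu_i$ can be approached by at most one eigenvalue of $\tilde B$ — so a genuine conjugate pair would force a repeated eigenvalue collapsing onto a single $\mu_i$, which I will rule out below. If $\mu_i$ has multiplicity one in $Q$ (the generic case), this already gives a contradiction: there can be only one eigenvalue of $\tilde B$ near $\mu_i$, so it cannot have a distinct complex conjugate, hence $\operatorname{Im}(\lambda_i) = 0$.

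To handle the case where $\mu_i$ has higher multiplicity, and to extract reality cleanly in all cases, I would use the eigenvector relations \eqref{eq:BHeq}–\eqref{eq:Bheq1}: writing $v_i = \begin{bmatrix} x_i \\ y_i\end{bmatrix}$ with $\|y_i\|_2 = 1$, we have $(D-I)y_i = \lambda_i x_i$ and $H(\lambda_i) y_i = 0$. Taking the Hermitian inner product of $H(\lambda_i) y_i = 0$ against $y_i$ gives
\[ \lambda_i^2 - \lambda_i \langle y_i, A y_i\rangle + \langle y_i, (D-I) y_i\rangle = 0. \]
Since $A$ and $D$ are real symmetric, both $\langle y_i, A y_i\rangle$ and $\langle y_i, (D-I) y_i\rangle$ are real, so $\lambda_i$ satisfies a \emph{real} quadratic; its two roots are either both real or form a complex conjugate pair of equal modulus. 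The modulus of the non-real root would be $\sqrt{\langle y_i,(D-I)y_i\rangle}$. Here is where Corollary~\ref{cor:inner_product} enters: the asymptotic computation $\langle y_i, (D-I) y_i\rangle = d + 1 - \tau_i + o(1)$ and $\langle y_i, A y_i \rangle = \mu_i + \tfrac{d+1-\tau_i}{\mu_i} + o(1)$ (these are exactly the quantities appearing in the sketch of Theorem~\ref{thm:count}), show that the discriminant of the real quadratic is
\[ \langle y_i, A y_i\rangle^2 - 4\langle y_i,(D-I)y_i\rangle = \left(\mu_i - \frac{d+1-\tau_i}{\mu_i}\right)^2 + o(1), \]
which is bounded away from zero whenever $\mu_i^2 \neq d+1-\tau_i = d + 1 - d/\mu_i^2$, i.e. whenever $\mu_i^4 - (d+1)\mu_i^2 + d \neq 0$, i.e. $\mu_i^2 \neq 1$ and $\mu_i^2 \neq d$. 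Both are guaranteed above the Kesten–Stigum threshold ($\mu_i^2 > d > 1$), so the discriminant is strictly positive for large $n$, forcing $\lambda_i \in \mathbb{R}$. Reality of the eigenvector then follows: $y_i$ lies in the kernel of the real matrix $H(\lambda_i)$, which (being real and, generically, of rank $n-1$ near $\lambda_i = \mu_i$ with $\mu_i$ simple) has a real null vector; and $x_i = \lambda_i^{-1}(D-I)y_i$ is then real as well. For the multiplicity-$>1$ case one works in the (real) eigenspace and chooses a real basis.

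The main obstacle I anticipate is the bookkeeping around repeated eigenvalues $\mu_i$: when several $\mu_i$ coincide, the eigenvalues of $\tilde B$ near that common value need not individually satisfy a scalar quadratic with real coefficients unless one first diagonalizes within the corresponding near-invariant subspace, and the perturbation argument (controlling the $o(1)$ errors in the inner products uniformly over this subspace) requires the quantitative local-weak-convergence estimates of \cite{stephan2022sparse} rather than just the first-order asymptotics. A secondary technical point is ensuring the $o(1)$ error terms in $\langle y_i, (D-I)y_i\rangle$ and $\langle y_i, A y_i\rangle$ are in fact $O(n^{-c})$ (not merely $o(1)$), so that the discriminant is bounded below by a constant and the conclusion $\lambda_i = \mu_i + O(n^{-c})$ is quantitative as stated; this should follow from the same concentration estimates used to prove Theorem~\ref{thm:count}, applied with the high-probability rate $1 - e^{-c\log_d n}$ recorded in the Notations paragraph.
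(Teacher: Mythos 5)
Your core argument is essentially the paper's own proof: you use the same real quadratic $\lambda_i^2 - \langle y_i, A y_i\rangle\,\lambda_i + \langle y_i,(D-I)y_i\rangle = 0$ coming from the eigenvector equations \eqref{eq:BHeq}--\eqref{eq:Bheq1}, combined with the inner-product asymptotics of Corollary~\ref{cor:inner_product}, and conclude that above the Kesten--Stigum threshold the degenerate case is excluded; the only difference is that you finish by showing the discriminant is bounded away from zero, whereas the paper derives a contradiction from $|\lambda_i|^2=\langle y_i,(D-I)y_i\rangle$ forcing $|\langle x_i,y_i\rangle|=|\lambda_i|$ --- two equivalent ways to close the same argument (and your opening digression on conjugate pairs and multiplicities is not needed once the quadratic-form argument is in place, since it applies to each eigenpair individually).
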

 When $d=\omega(\log n)$, this has been justified in \cite{le2022estimating,coste2021eigenvalues,wang2017limiting} based on a deterministic result in \citet[Theorem 3.7]{angel2015non}.

\subsection{Growing degree regime}
We now consider the growing degree regime where all parameters $\pi_i, P_{ij}$ are allowed to scale with $n$, such that the number of communities and the average degree satisfy
\begin{equation}\label{eq:regime_d}
  r, d \lesssim \operatorname{polylog}(n).
\end{equation}
This encompasses both the semi-dense regime $d = \Theta(\log(n))$, at which perfect reconstruction is known to be possible \citep{abbe2018community}, and the intermediary regime in which a (vanishing) fraction of the vertices will be misclassified, also known as weak consistency.

\subsubsection{Spectrum of $H(\pm \sqrt{d})$}
The first theorem establishes the concentration of the negative outlier locations when $d\to\infty$:
\begin{theorem}[Eigenvalue locations] \label{thm:location_infty}
  Assume that we are in the regime \eqref{eq:regime_d}, and that the inverse signal-to-noise ratio satisfies $\tau_{r_0} \le c$ for some constant $c\in (0,1)$ independent from $n$. Then, with high probability, the following holds:
  \begin{enumerate}
    \item  For $1\leq k\leq r_+$,
      \begin{align}
        \lambda_{n-k+1}(H(\sqrt{d}))= (\sqrt{d} - \mu_k^+)\left(\sqrt{d} - \frac{d}{\mu_k^+}\right) + O(\sqrt{r_+d}).
      \end{align}
    \item  For $1\leq k\leq r_-$,
      \begin{align}
        \lambda_{n-k+1}(H(-\sqrt{d}))=(\sqrt{d} - \mu_{k}^-)\left(\sqrt{d} - \frac{d}{\mu_{k}^-}\right) + O(\sqrt{r_{-} d}).
      \end{align}
    \item In particular, assume $d\to\infty$ and $d\gg r$, for $k\in [r_+]$,
      \begin{align}
        \lambda_{n-k+1}\left(\frac{1}{d}H(\sqrt{d})\right)=\left(1-\frac{1}{\sqrt{\tau_k^+}}\right)\left(1-\sqrt{\tau_k^+}\right)+o(1).
      \end{align}
      And for $k\in [r_-]$,
      \[\lambda_{n-k+1}\left(\frac{1}{d}H(-\sqrt{d})\right)=\left(1-\frac{1}{\sqrt{\tau_k^-}}\right)\left(1-\sqrt{\tau_k^-}\right)+o(1).\]
  \end{enumerate}
\end{theorem}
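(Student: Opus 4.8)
The plan is to transfer the problem to the reduced non-backtracking matrix $\tilde B$ via the Ihara-Bass formula and the quadratic-form identity sketched for Theorem~\ref{thm:count}. Recall that if $y_i$ lies in the kernel of $H(\lambda_i)$ with $\lambda_i = \lambda_i(\tilde B)$, then
\[
  y_i^\top H(t) y_i = \Bigl(t - \tfrac{\langle y_i, (D-I) y_i\rangle}{\lambda_i}\Bigr)(t - \lambda_i).
\]
In the growing-degree regime one expects much tighter control than in the bounded-degree case: the degrees concentrate, so $\langle y_i, (D-I)y_i \rangle / \|y_i\|^2 \approx \langle y_i, \Pi y_i \rangle \cdot (\text{something}) $; more precisely the local-weak-convergence computation of \cite{stephan2022sparse}, made quantitative, should give $\langle y_i, (D-I)y_i\rangle = d + 1 - \tau_i + O(\sqrt{r_i d})$-type error terms (with $\|y_i\|=1$), and $\lambda_i = \mu_i + O(\sqrt{r d})$-type error from \cite{stephan2022sparse}. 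Plugging these into the identity, and using that at $t = \pm\sqrt d$ the product $(t - \lambda_i)$ is of order $\sqrt d$ while the first factor is $t - d/\mu_i + $ lower order, yields $y_i^\top H(\pm\sqrt d) y_i = (\sqrt d - \mu_i^{\pm})(\sqrt d - d/\mu_i^{\pm}) + O(\sqrt{r d})$, with $O(\sqrt{rd})$ cross terms $y_i^\top H(\pm\sqrt d) y_j$ for $i \neq j$.

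From here I would run the two-sided variational argument. For the upper bound on $\lambda_{n-k+1}(H(\sqrt d))$ (i.e. showing there are at least $k$ eigenvalues at most the claimed value), restrict the quadratic form to the span of $y_1^+, \dots, y_k^+$: the Gram matrix of this family is $I + O(\sqrt{rd})$ (off-diagonal overlaps are small, diagonal entries concentrate), so it is invertible, and on the corresponding $k$-dimensional subspace the Rayleigh quotient of $H(\sqrt d)$ is at most $\max_{j\le k}(\sqrt d - \mu_j^+)(\sqrt d - d/\mu_j^+) + O(\sqrt{rd})$; since $(\sqrt d - \mu^+)(\sqrt d - d/\mu^+)$ is decreasing in $\mu^+$ over $(\sqrt d, d]$, the worst index is $j = k$, giving the one-sided bound. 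For the matching lower bound (at most $k$ eigenvalues strictly below the claimed value), I would use a continuity/deformation argument in $t$ of the type used in \cite{mohanty2024robust} together with the Ihara-Bass correspondence: the number of negative eigenvalues of $H(t)$, as $t$ increases past $\mu_k^\pm$, jumps by exactly one, and tracking the eigenvalue that crosses zero (equivalently, the outlier of $\tilde B$ that passes through $t$) with an interlacing/perturbation estimate pins its location to within $O(\sqrt{rd})$ of $(\sqrt d - \mu_k^\pm)(\sqrt d - d/\mu_k^\pm)$. Part (iii) is then immediate: under $d \to \infty$, $d \gg r$, divide by $d$, write $\mu_k^\pm = \pm\sqrt{d/\tau_k^\pm}$, and note the error $O(\sqrt{rd})/d = O(\sqrt{r/d}) = o(1)$, and likewise for the $-\sqrt d$ case (here one also uses that the $-\sqrt d$ branch of Ihara-Bass/the non-backtracking outliers below $-\sqrt d$ behave symmetrically).

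The main obstacle I expect is the quantitative control of $\langle y_i, (D-I) y_i\rangle$ and of the overlaps $\langle y_i, y_j\rangle$ with the sharp $O(\sqrt{rd})$ (rather than $o(d)$) error, uniformly over the $r_0$ informative eigenvectors. In the bounded-degree regime one only needs $o(1)$ corrections, obtained from qualitative local weak convergence; here one needs a rate, which requires a quantitative version of the local-convergence analysis of \cite{stephan2022sparse} — essentially showing that the neighborhood-statistics of $\tilde B$'s eigenvectors concentrate around their Galton–Watson-tree limits with fluctuations of the right order, and that the degree fluctuations $D - dI$ contribute at the $\sqrt d$-scale rather than accumulating. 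A secondary difficulty is handling the regime where $r$ grows (up to $\mathrm{polylog}(n)$): the subspace spanned by the $y_i^+$ has dimension growing with $n$, so the Gram-matrix perturbation $O(\sqrt{rd})$ must be shown to stay $o(1)$ relative to the eigenvalue gaps, which is exactly where the hypothesis $d \gg r$ (resp. $\tau_{r_0} \le c < 1$) enters and must be used carefully to control the accumulated error across all $k \le r_\pm$ simultaneously.
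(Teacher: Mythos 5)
Your upper-bound half (Courant--Fischer on $\mathrm{span}(y_1^\pm,\dots,y_k^\pm)$ with the quadratic-form identity of Lemma~\ref{lem:Hy} and the inner products of Corollary~\ref{cor:inner_product}) matches the paper and is fine, as is your treatment of part (iii). The genuine gap is the matching lower bound on $\lambda_{n-k+1}(H(\pm\sqrt d))$. A Rayleigh-quotient bound on a $k$-dimensional subspace can never bound $\lambda_{n-k+1}$ from below, and the continuity/deformation argument you invoke (the one from \cite{mohanty2024robust}, Lemma~\ref{lem:lower_bound} here) only yields \emph{inertia} information: it counts how many eigenvalues of $H(t)$ are negative by recording when eigenvalues cross zero as $t$ varies. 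Knowing that an eigenvalue of $H(t)$ vanishes at $t=\lambda_k(\tilde B)\approx\mu_k^\pm$ says nothing quantitative about its value at $t=\pm\sqrt d$: to "track" it back you would need to control its variation over the whole interval between $\sqrt d$ and $\mu_k$, and the natural Lipschitz bound $|\partial_t\lambda(H(t))|\lesssim t+\norm{A}$ integrated over an interval of length $\asymp\mu_k$ produces errors of order $d^{3/2}$, far larger than the claimed $O(\sqrt{r_\pm d})$. The paper closes this with a different mechanism: a genuine $\ell_2$ residual bound $\norm{H(\pm\sqrt d)y_k^\pm-\theta_k^\pm y_k^\pm}\le\sqrt d+1$ (Lemma~\ref{lem:y_pseudo_eigen}, which needs \eqref{eq:projxy}, i.e.\ the misalignment $\norm{x_i-\langle x_i,y_i\rangle y_i}$, not just the scalar $y_i^\top H y_i$), then orthonormalization of the $y$'s, a local Weyl lemma (Lemma~\ref{lem:app:local_weyl}) producing $r_\pm$ true eigenvalues within $2\sqrt{r_\pm}(\sqrt d+1)$ of the predictions, and finally the negative-eigenvalue count from the continuity/Ihara--Bass argument to identify these as the $r_\pm$ smallest eigenvalues (Proposition~\ref{prop:eigendecomposition_H}). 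Your proposal has no substitute for the residual-plus-local-Weyl step, so the two-sided location statement is not established.

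A secondary point: you misdiagnose where the error $O(\sqrt{r_\pm d})$ comes from. The quantities $\lambda_i$, $\langle y_i,y_j\rangle$, $\langle y_i,(D-I)y_j\rangle$ are already known from \cite{stephan2022sparse} (Propositions~\ref{prop:eigen_weak_convergence} and \ref{prop:scal_u}) up to $O(\epsilon_n)$ with $\epsilon_n=\exp(-c\log n/\log d)$, which is much smaller than $\sqrt{rd}$; no new quantitative local-convergence analysis at the $\sqrt{rd}$ scale is needed (and assuming only $\lambda_i=\mu_i+O(\sqrt{rd})$, as you write, would actually be too weak, since it propagates to an error of order $\sqrt r\,d$ in the product). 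The $O(\sqrt{r_\pm d})$ in the theorem arises from the intrinsic non-alignment of $x_i$ with $y_i$, which makes each $y_i$ only a $(\sqrt d+1)$-pseudo-eigenvector, together with the $\sqrt{r_\pm}$ factor from handling all informative directions simultaneously in Lemma~\ref{lem:app:local_weyl}.
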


Theorem~\ref{thm:location_infty} is proved in Proposition~\ref{prop:eigendecomposition_H} (i). We can also obtain a related result on the eigenvectors of $H(t)$. For $k \in [r_\pm]$, define $y_k^\pm$ to be the zero eigenvector associated with $H(\lambda_k^\pm)$, where $\lambda_k^\pm$ is the $k$-th positive (resp. negative) eigenvalue of $B$ (see Section~\ref{sec:real_eigenvalue} for a justification that those eigenvalues are real).

\begin{theorem}[Eigenvector approximation]\label{thm:eigenspace}
  Assume that we are in the regime \eqref{eq:regime_d}, and that $\tau_{r_0} \leq c$ for some constant $c\in (0,1)$ independent from $n$. For $k \in [r_{\pm}]$, let $v_k^{\pm}$ be a unit vector of $H(\pm\sqrt{d})$ associated with the $k$-th smallest eigenvalue $\lambda_{n-k+1}(H(\pm\sqrt{d}))$.
  Then there exists orthogonal matrices $O^+ \in \dR^{r_+ \times r_+}, O^- \in \dR^{r_- \times r_-}$ such that
  \[ \norm{V^+ - Y^+ O^+ } = O(\sqrt{r_+/d}) \quand \norm{V^- - Y^- O^- } = O(\sqrt{r_-/d}), \]
  where $Y^+\in \dR^{n\times r_{\pm}}$ (resp. $V^+, Y^-, V^-$) is the matrix whose columns are the $y_k^+$ (resp. $v_k^+, y_k^-, v_k^-$).
\end{theorem}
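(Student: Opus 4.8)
The plan is to transfer the eigenvector approximation from the reduced non-backtracking matrix $\tilde B$ to the Bethe-Hessian $H(\pm\sqrt d)$, using the quadratic form identity already exploited in the proof of Theorem~\ref{thm:count} together with Hermitian perturbation theory. First I would recall the exact identity for the zero eigenvector $y_k^\pm$ of $H(\lambda_k^\pm)$: expanding $y^\top H(t) y$ in $t$ and using $H(\lambda_k^\pm) y_k^\pm = 0$ gives
\[
(y_k^\pm)^\top H(t) y_k^\pm = \left(t - \frac{\langle y_k^\pm, (D-I) y_k^\pm\rangle}{\lambda_k^\pm}\right)(t - \lambda_k^\pm).
\]
Evaluating at $t = \pm\sqrt d$ and invoking the inner-product asymptotics (the growing-degree analogue of Corollary~\ref{cor:inner_product}, which gives $\langle y_k^\pm,(D-I)y_k^\pm\rangle = d + O(\sqrt{r d})$ say, together with $\lambda_k^\pm = \mu_k^\pm + O(\sqrt{r})$ from the non-backtracking spectrum), I would obtain
\[
(y_k^\pm)^\top H(\pm\sqrt d)\, y_k^\pm = (\pm\sqrt d - \mu_k^\pm)\!\left(\pm\sqrt d - \frac{d}{\mu_k^\pm}\right) + O(\sqrt{r d}),
\]
matching the eigenvalue locations in Theorem~\ref{thm:location_infty}, and analogously $(y_k^\pm)^\top H(\pm\sqrt d) y_j^\pm = O(\sqrt{r d})$ for $j \ne k$ and $O(\sqrt{rd})$ against the opposite sign block and against lower eigenvectors.

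Next I would set up the perturbation argument. Let $Y^\pm \in \dR^{n\times r_\pm}$ collect the $y_k^\pm$; these are not exactly orthonormal, but the inner-product computations show $\|(Y^\pm)^\top Y^\pm - I\| = O(\sqrt{r/d})$, so after a Gram–Schmidt / polar correction I may treat $\mathrm{span}(Y^\pm)$ as an approximate invariant subspace. The displayed quadratic forms say precisely that, in an orthonormal basis close to $Y^\pm$, the compression of $H(\pm\sqrt d)$ to this subspace is a matrix whose eigenvalues are the claimed locations $\ell_k^\pm := (\pm\sqrt d - \mu_k^\pm)(\pm\sqrt d - d/\mu_k^\pm)$ up to an $O(\sqrt{rd})$ error, and that $H(\pm\sqrt d)$ maps this subspace almost into itself: $\|H(\pm\sqrt d) Y^\pm - Y^\pm M^\pm\| = O(\sqrt{rd})$ for the small matrix $M^\pm$ of quadratic forms. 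I would then apply a Davis–Kahan / $\sin\Theta$ bound (from Appendix~\ref{sec:app:perturbation}) for Hermitian matrices: provided the target eigenvalues $\ell_k^\pm$ are separated by $\Omega(d)$ from the rest of the spectrum of $H(\pm\sqrt d)$ — which holds because the bulk of $\frac1d H(\pm\sqrt d)$ concentrates away from $(1-\tau^{-1/2})(1-\tau^{1/2})$ under the SNR hypothesis $\tau_{r_0} \le c < 1$, and the other outliers sit at the distinct locations $\ell_j^\pm$ — the subspace $\mathrm{span}(V^\pm)$ spanned by the bottom $r_\pm$ eigenvectors of $H(\pm\sqrt d)$ is within $O(\sqrt{rd}/d) = O(\sqrt{r/d})$ of $\mathrm{span}(Y^\pm)$ in operator norm. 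Converting a subspace (principal-angle) bound into the stated bound $\|V^\pm - Y^\pm O^\pm\|$ for an optimal orthogonal alignment $O^\pm$ is the standard final step, choosing $O^\pm$ from the SVD of $(V^\pm)^\top Y^\pm$ and absorbing the $O(\sqrt{r/d})$ non-orthonormality of $Y^\pm$ into the error.

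The main obstacle, as in Theorem~\ref{thm:count}, is establishing the precise inner-product asymptotics $\langle y_k^\pm,(D-I)y_k^\pm\rangle = d + O(\sqrt{rd})$ and the near-orthogonality $\langle y_k^\pm, y_j^\pm\rangle$, $\langle y_k^+, y_j^-\rangle$ in the growing-degree regime with the stated error rate — this is where the quantitative local-weak-convergence machinery of \cite{stephan2022sparse} enters, and it must be run carefully enough to produce the $O(\sqrt{r/d})$ rate uniformly over $k,j \le r_0$ rather than just an $o(1)$ bound. A secondary technical point is verifying the spectral gap: I need that no non-informative eigenvalue of $H(\pm\sqrt d)$ (in particular none of the localized large positive eigenvalues coming from high-degree vertices) falls near the window of the $\ell_k^\pm$; this follows from the lower tail bound on $\lambda_n(H(\pm\sqrt d))$ and a trace/Ihara–Bass argument bounding the number of negative eigenvalues, but the bookkeeping of constants in the $\tau_{r_0}\le c$ regime needs care. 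Once these two inputs are in hand, the Davis–Kahan step is routine.
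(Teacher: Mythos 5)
Your overall route coincides with the paper's: use the kernel vectors $y_k^\pm$ of $H(\lambda_k^\pm)$ as approximate eigenvectors of $H(\pm\sqrt d)$, orthonormalize them, and apply a Davis--Kahan-type bound where the spectral gap comes from the fact that only $r_\pm$ eigenvalues of $H(\pm\sqrt d)$ are (significantly) negative, yielding $O(\sqrt{rd})/\Omega(d)=O(\sqrt{r/d})$. That part of your plan, including the gap verification via the Ihara--Bass count of negative eigenvalues, matches Proposition~\ref{prop:eigendecomposition_H} and Lemmas~\ref{lem:app:local_weyl}--\ref{lem:davis_kahan}.

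However, there is a genuine gap in how you justify the key quantitative input. You claim that the quadratic forms $(y_j^\pm)^\top H(\pm\sqrt d)\,y_k^\pm$ ``say precisely that \dots\ $H(\pm\sqrt d)$ maps this subspace almost into itself,'' i.e.\ $\|H(\pm\sqrt d)Y^\pm - Y^\pm M^\pm\| = O(\sqrt{rd})$. They do not: the quadratic forms only control the compression $(Y^\pm)^\top H Y^\pm$, i.e.\ the projection of $H y_k^\pm$ onto $\operatorname{span}(Y^\pm)$, and say nothing about the component of $H y_k^\pm$ orthogonal to that span. (With only the compression plus the negative-eigenvalue count, a variational argument gives a constant-order overlap between $\operatorname{span}(Y^\pm)$ and the bottom eigenspace, not the claimed $1-O(r/d)$ accuracy.) The missing ingredient is a bound on the full residual: from Lemma~\ref{lem:Hy}, $H(t)y_k = (t-\lambda_k)(t y_k - x_k)$ with $x_k = (D-I)y_k/\lambda_k$, so the orthogonal residual is $|t-\lambda_k|\cdot\|x_k - \langle x_k,y_k\rangle y_k\|$, and controlling it requires knowing $\|x_k\|$, equivalently the second moment $\langle y_k,(D-I)^2 y_k\rangle$ --- not just $\langle y_k,(D-I)y_k\rangle$ and the near-orthogonality you list as the needed inputs. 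The paper computes exactly this via local weak convergence (Proposition~\ref{prop:scal_u}, \eqref{eq:scal_uddu}), obtaining
\begin{equation}
\|x_k - \langle x_k, y_k\rangle y_k\|^2 = \frac{d+\tau_k(1-\tau_k)}{\mu_k^2} + O(\epsilon_n),
\end{equation}
which is what yields the per-vector residual $\lesssim \sqrt d$ (Lemma~\ref{lem:y_pseudo_eigen}) and hence your $O(\sqrt{rd})$ aggregate bound. The fix stays within the machinery you already invoke, but as written the almost-invariance step --- the crux of the argument --- is asserted rather than proved, and your stated list of required estimates omits the $(D-I)^2$ moment that makes it work.
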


Theorem~\ref{thm:eigenspace} is proved in Proposition~\ref{prop:eigendecomposition_H} (ii).
Since the overlaps between the $y_i^{\pm}$ and the true eigenvectors of $\dE[A]$ are known from \cite{stephan2022sparse}, we easily get the following corollary, whose proof is given by Lemma~\ref{lem:bethe_overlaps_sharp}.
\begin{corollary}\label{cor:bethe_overlaps}
  Under the same assumptions as in Theorem~\ref{thm:eigenspace}, there exist orthogonal matrices $\tilde O^+ \in \dR^{r_+ \times r_+}, \tilde O^- \in \dR^{r_- \times r_-}$ such that
  \[ \norm{V^+ - \check\Phi^+ \tilde O^+ } \leq  2\sum_{k=1}^{r_+} \tau_k + O(\sqrt{r_+/d}) \quand \norm{V^- -  \check\Phi^- \tilde O^-} \leq  2\sum_{k=1}^{r_-} \tau_{r-k+1} + O(\sqrt{r_-/d}), \]
  Where $\check\Phi^+_{kx} = \phi_k(\sigma(x))$ and $\check\Phi^-_{kx} = \phi_{n-k+1}(\sigma(x))$.
\end{corollary}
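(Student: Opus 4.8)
The idea is to chain Theorem~\ref{thm:eigenspace}, which already places $V^{\pm}$ within $O(\sqrt{r_\pm/d})$ of $Y^{\pm}$ up to an orthogonal rotation $O^{\pm}$, with a sharp estimate on how well the Bethe--Hessian zero eigenvectors $y_k^{\pm}$ are themselves aligned with the population vectors $\check\phi_k$; this second ingredient is Lemma~\ref{lem:bethe_overlaps_sharp}. I will treat the $+$ side, the $-$ side being identical after reindexing ($\phi_{n-k+1}$, $\tau^-_k = \tau_{r-k+1}$). It suffices to find an orthogonal $\hat O^{+}\in\dR^{r_+\times r_+}$ with $\norm{Y^{+} - \check\Phi^{+}\hat O^{+}} \le 2\sum_{k=1}^{r_+}\tau_k + O(\sqrt{r_+/d})$ (here $\check\Phi^{+}$ is taken with unit-norm columns), since then $\tilde O^{+} := \hat O^{+}O^{+}$ works: by the triangle inequality and norm-invariance under right multiplication by the orthogonal $O^{+}$,
\[
  \norm{V^{+} - \check\Phi^{+}\hat O^{+}O^{+}} \;\le\; \norm{V^{+} - Y^{+}O^{+}} \;+\; \norm{Y^{+} - \check\Phi^{+}\hat O^{+}},
\]
and the first term is $O(\sqrt{r_+/d})$ by Theorem~\ref{thm:eigenspace}.

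\textbf{The overlap estimate and the rotation.} To build $\hat O^{+}$, I would first compute the $r_+\times r_+$ cross-Gram matrix $G^{+}$ with entries $G^{+}_{k\ell} = \langle y_k^{+}, \check\phi_\ell\rangle$, together with the Gram matrix $(Y^{+})^{\top}Y^{+}$, recycling the quantitative local weak convergence of the SBM from \cite{stephan2022sparse} --- the same device behind the inner-product identities used for Theorems~\ref{thm:count}, \ref{thm:real_eigenvalues} and \ref{thm:location_infty}. The expected outcome is $(Y^{+})^{\top}Y^{+} = I_{r_+} + O(\sqrt{r_+/d})$, $(\check\Phi^{+})^{\top}\check\Phi^{+} = I_{r_+}$, and $G^{+} = \Delta^{+} + O(\sqrt{r_+/d})$, where $\Delta^{+}$ is the diagonal matrix whose $k$-th entry is the overlap profile of \cite{stephan2022sparse} evaluated at $\tau_k^{+}$: a quantity equal to $1$ in the limit $\tau_k^{+}\to 0$ with leading correction of order $\tau_k^{+}$, and bounded away from $0$ since $\tau_{r_0}\le c<1$. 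Given this, I would take $\hat O^{+}$ to be the orthogonal factor in the polar decomposition of $G^{+}$ (well defined since $G^{+}$ is invertible for large $n$); because $Y^{+}$ and $\check\Phi^{+}$ have near-orthonormal columns and $G^{+}$ is within $O(\sqrt{r_+/d})$ of the diagonal $\Delta^{+}$, a $\sin\Theta$-type bound gives $\norm{Y^{+} - \check\Phi^{+}\hat O^{+}} \lesssim \norm{I_{r_+} - \Delta^{+}} + O(\sqrt{r_+/d})$, and summing the per-direction contributions $1 - \Delta^{+}_{kk} = O(\tau_k^{+})$ over $k\in[r_+]$ produces the claimed bound.

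\textbf{Main obstacle.} The substantive step is the overlap estimate: getting $G^{\pm}$ with exactly the right $\tau$-dependence and only an additive $O(\sqrt{r_\pm/d})$ error, \emph{uniformly} over the regime \eqref{eq:regime_d}. This requires pushing the local-weak-convergence estimates of \cite{stephan2022sparse} through to degree scaling $d \lesssim \operatorname{polylog}(n)$, carefully tracking the joint dependence on $r$ and $d$, and controlling all $r_\pm$ directions at once --- the non-Hermiticity of $\tilde B$, which makes the $y_k^{\pm}$ only approximately orthonormal, is what makes the simultaneous control delicate. Everything else (the triangle-inequality reduction and the polar-decomposition step) is routine by comparison.
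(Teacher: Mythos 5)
Your route is essentially the paper's own (Lemma~\ref{lem:bethe_overlaps_sharp}): the key input is the overlap $\langle y_k^{\pm},\check\phi_k\rangle=\sqrt{1-\tau_k^{\pm}}+O(\epsilon_n)$ imported from \cite[Theorem 2]{stephan2022sparse}, converted into a bound on $\norm{Y^{\pm}-\check\Phi^{\pm}(\text{orthogonal})}$, and chained with Theorem~\ref{thm:eigenspace} by exactly the triangle-inequality step you describe, with $\tilde O^{\pm}$ the product of the two rotations. The only real difference is the device for producing the orthogonal matrix: you take the polar (Procrustes) factor of the cross-Gram $G^{\pm}$, while the paper writes $\check\phi_k^{\pm}=\check\Phi^{\pm}w_k^{\pm}$, bounds $\norm{Y^{\pm}-\check\Phi^{\pm}W^{\pm}}_F$ column by column, and then orthogonalizes the nearly orthogonal $W^{\pm}$ as in Proposition~\ref{prop:eigendecomposition_H}; these are interchangeable. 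The step you single out as the main obstacle—computing $G^{\pm}$ uniformly in the regime $d\lesssim\operatorname{polylog}(n)$—needs no new work: it is precisely what \cite[Theorem 2]{stephan2022sparse} (equivalently, Proposition~\ref{prop:eigen_weak_convergence} together with \eqref{eq:scal_uphi}) supplies, with error $O(\epsilon_n)$ that is absorbed into the $O(\sqrt{r_\pm/d})$ term coming from Theorem~\ref{thm:eigenspace}. One caveat: your claimed bound $\norm{Y^{+}-\check\Phi^{+}\hat O^{+}}\lesssim\norm{I-\Delta^{+}}+O(\sqrt{r_+/d})$ conflates a squared distance with a distance—per direction one has $\norm{y_k^{+}-\check\phi_k^{+}}^2=2\bigl(1-\sqrt{1-\tau_k^{+}}\bigr)\le 2\tau_k^{+}$, hence a distance of order $\sqrt{\tau_k^{+}}$ rather than $\tau_k^{+}$—so your chain literally yields a bound of order $\bigl(\sum_k\tau_k^{\pm}\bigr)^{1/2}+O(\sqrt{r_\pm/d})$ rather than $2\sum_k\tau_k^{\pm}+O(\sqrt{r_\pm/d})$; the paper's own write-up makes the same identification at the same spot, so this is not a gap you introduced, but it is worth stating the exponent correctly.
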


\subsubsection{Weak recovery and consistency for spectral algorithms}
Let $\mathbb{M}_{n,r}$ be the collection of all $n\times r$ matrices where each row has exactly one $1$ and $(r-1)$ zeros. Given a spectral embedding matrix $V\in \dR^{n\times r}$, the  $k$-means clustering problem can be defined as
\begin{align}
  (\hat{\Sigma},\hat{P})=\min_{\Sigma\in \mathbb M_{n,r}, P\in \dR^{r\times r}} \| \Sigma P - V\|_F^2,
\end{align}
where $\hat{\Sigma}$ represents a partition of $n$ data points into  $r$ clusters.
Efficient algorithms exist
for finding an approximate solution whose value is within a constant fraction of the optimal value \citep{kumar2004simple}, which finds $(\hat{\Sigma},\hat{P})$ such that
\begin{align}\label{eq:approx_k_means_eta}
  \| \hat \Sigma \hat P -V\|_F^2\leq (1+\eta)\min_{\Sigma\in \mathbb M_{n,r}, P\in \dR^{r\times r }} \| \Sigma P -V \|_F^2.
\end{align}

\begin{algorithm}
  \caption{Spectral clustering with the Bethe-Hessian}
  \label{alg:spectral_clust}
  \begin{algorithmic}[1]
    \REQUIRE Adjacency matrix $A$ of a graph $G$, approximation parameter $\eta>0$
    \STATE Let $\hat d$ be the average degree of $G$. Construct two Bethe-Hessian matrices $H\left(\pm \sqrt{\hat{d}}\right)$.
    \STATE Count the number of negative eigenvalues below $-\log^{-1}(n)$ of $H\left(\pm \sqrt{\hat d}\right)$ and use it as an estimation of $r_+, r_-$, respectively. Let $r=r_{-}+r_+$.
    \STATE Compute unit eigenvectors  associated with  negative eigenvalues of $H\left(\pm \sqrt{\hat{d}}\right)$ below $\log^{-1}(n)$, denoted by $v_k^{\pm}$ for $k\in [r_{\pm}]$. Let $V^{\pm}\in \mathbb R^{n\times r_{\pm}}$ be the matrix whose rows are $v_k^{\pm}$ for $k\in [r_{\pm}]$ and $V=[V^+,V^-]\in \dR^{n\times r}$.
    \STATE Let $(\hat \Sigma, \hat{P})$ be an $(1+\eta)$-approximate solution to the $k$-means problem \eqref{eq:approx_k_means_eta}.
    \STATE For each $i\in [n]$, let $\hat{\sigma}_i=\sum_{k=1}^{r} k \cdot \mathbf{1} \{\hat{\Sigma}_{i,k}=1\}$.
    \RETURN $\hat{\sigma}$.
  \end{algorithmic}
\end{algorithm}

Since the rows of $\Phi_k^\pm$ are clustered w.r.t the true assignment $\sigma$, Corollary \ref{cor:bethe_overlaps} suggests that this is also true for the $V^\pm$. This indicates that Algorithm \ref{alg:spectral_clust} can achieve \textit{weak recovery} when the average degree is large enough. We show that this is indeed the case:

\begin{theorem}[Weak recovery at large degree]\label{thm:weak_recovery}
  Assume that we are in the regime \eqref{eq:regime_d}, and that \[\max_{i\leq r}\tau_i \leq c\] for some constant $c < 1$. Let $\hat\sigma$ be the output of Algorithm \ref{alg:spectral_clust} on $G$. Then for any constant $\eta>0$, there exists an absolute  constant $C>0$ such that with high probability,
  \begin{equation}
    \mathrm{ov}(\sigma, \hat\sigma) \geq  1 - C\left(\sum_{i=1}^r \tau_i \right)^2 + O(r/d).
  \end{equation}
\end{theorem}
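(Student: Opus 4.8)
The plan is to show that the spectral embedding $V=[V^+,V^-]$ produced by Algorithm~\ref{alg:spectral_clust} is, in Frobenius norm, within $O\big(\sum_i\tau_i+\sqrt{r/d}\big)$ of a matrix that is block-constant along the true communities and whose community centers are pairwise separated by an $\Omega(1)$ amount, and then to invoke the standard rounding guarantee for the $(1+\eta)$-approximate $k$-means step. So the proof splits into (a) correctness of the model-order estimate and (b) the rounding. For (a), first replace $\hat d$ by $d$: with high probability $\hat d=d+O(\sqrt{d/n})$, so $\|H(\pm\sqrt{\hat d})-H(\pm\sqrt d)\|\le|\hat d-d|+|\sqrt{\hat d}-\sqrt d|\,\|A\|=O(\operatorname{polylog}(n)/\sqrt n)$; since by Theorem~\ref{thm:location_infty} (and the accompanying bound showing $\lambda_{n-r_\pm}(H(\pm\sqrt d))>-\log^{-1}(n)$) the $r_\pm$ negative outliers of $H(\pm\sqrt d)$ equal $(\sqrt d-\mu_k^{\pm})(\sqrt d-d/\mu_k^{\pm})+O(\sqrt{r_\pm d})$, which under $\tau_k^{\pm}\le c<1$ and $d\gg r$ are negative of magnitude $\gtrsim d$ and separated from the rest of the spectrum by $\Omega(d)$, Weyl's inequality and Davis--Kahan show that this swap perturbs the computed eigenvalues and eigenvectors by $O(\operatorname{polylog}(n)/\sqrt n)$ and $o(1)$ respectively. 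Hence $H(\pm\sqrt d)$ has exactly $r_\pm$ eigenvalues below $-\log^{-1}(n)$, Step~2 of the algorithm returns $\hat r_\pm=r_\pm$ and $r=r_++r_-$, and Step~3 returns the matrices $V^{\pm}$ of Corollary~\ref{cor:bethe_overlaps}. (If $d\not\gg r$, the conclusion carries an $O(r/d)=\Omega(1)$ term and is vacuous, so we may assume $d\gg r$.)

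\textbf{Population embedding and separation.} Since $\max_{i\le r}\tau_i<1$ forces $r_0=r$, the matrix $\check\Phi:=[\check\Phi^+,\check\Phi^-]$ collects all $r$ eigenvectors $\phi_i$ of $Q$; after the normalization making its columns unit vectors (matching $V$), write it as $\check\Phi=\Sigma^\ast C^\ast$ with $\Sigma^\ast\in\mathbb M_{n,r}$ encoding $\sigma$ and $C^\ast\in\dR^{r\times r}$ the community-value matrix. Using $\langle\phi_i,\phi_j\rangle_\pi=\delta_{ij}$ and completeness of $\{\phi_i\}_{i\le r}$ one gets $\sum_{i\le r}\phi_i(a)\phi_i(b)=\delta_{ab}/\sqrt{\pi_a\pi_b}$, hence for $a\ne b$,
\[ \|C^\ast_{a,\cdot}-C^\ast_{b,\cdot}\|_2^2=\tfrac1n\big(\pi_a^{-1}+\pi_b^{-1}\big); \]
call $\delta^2$ the minimum of this over $a\ne b$, so $n\delta^2\ge 2$. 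With $\tilde O:=\diag(\tilde O^+,\tilde O^-)$ orthogonal, $U:=\check\Phi\,\tilde O=\Sigma^\ast(C^\ast\tilde O)$ is block-constant with the same separation $\delta$, and Corollary~\ref{cor:bethe_overlaps} (in Frobenius norm) yields
\[ \|V-U\|_F^2=\|V^+-\check\Phi^+\tilde O^+\|_F^2+\|V^--\check\Phi^-\tilde O^-\|_F^2\le C_0\Big(\sum_{i=1}^r\tau_i\Big)^2+O(r/d). \]

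\textbf{$k$-means rounding.} Since $U=\Sigma^\ast(C^\ast\tilde O)$ is feasible for \eqref{eq:approx_k_means_eta}, $\|\hat\Sigma\hat P-V\|_F^2\le(1+\eta)\|V-U\|_F^2$, whence $\|\hat\Sigma\hat P-U\|_F\le(1+\sqrt{1+\eta})\|V-U\|_F$. Let $S=\{x:\|(\hat\Sigma\hat P)_{x,\cdot}-U_{x,\cdot}\|_2\ge\delta/2\}$; then $|S|(\delta/2)^2\le\|\hat\Sigma\hat P-U\|_F^2$, so
\[ \frac{|S|}{n}\le\frac{4(1+\sqrt{1+\eta})^2}{n\delta^2}\|V-U\|_F^2\le C_1\Big(\sum_{i=1}^r\tau_i\Big)^2+O(r/d), \]
with $C_1$ depending only on $\eta$. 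Under the model's standing regularity (a lower bound on $\pi_{\min}$ of the type used in \cite{stephan2022sparse}) and in the range where the conclusion is not vacuous we have $|S|<n\pi_{\min}$, so every community meets $[n]\setminus S$; each estimated center used by a vertex outside $S$ then lies within $\delta/2$ of exactly one population center, and a pigeonhole argument shows the induced map from estimated labels to communities is a bijection $\mathfrak p^{-1}$ with $\hat\sigma(x)=\mathfrak p(\sigma(x))$ for all $x\notin S$. Therefore $\mathrm{ov}(\sigma,\hat\sigma)\ge 1-|S|/n\ge 1-C_1\big(\sum_{i=1}^r\tau_i\big)^2+O(r/d)$, which is the claim with $C=C_1$.

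\textbf{Main obstacle.} The conceptual work — the location of the negative outliers of $H(\pm\sqrt d)$ and the approximation of their eigenvectors by the community-constant matrix $\check\Phi$ — is carried out by Theorems~\ref{thm:location_infty}--\ref{thm:eigenspace} and Corollary~\ref{cor:bethe_overlaps}; within the present argument the delicate points are (i) keeping the normalization straight so that the leading term of $\|V-U\|_F^2/(n\delta^2)$ is $(\sum_i\tau_i)^2$ with a constant independent of $n$ and $r$, which hinges on Corollary~\ref{cor:bethe_overlaps} being a bound of size $\sum_i\tau_i$ rather than $\sqrt r\sum_i\tau_i$, and (ii) verifying that $|S|$ is genuinely smaller than the smallest community so that the global permutation $\mathfrak p$ is well-defined. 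Neither is conceptually hard, but both require care with the model's regularity assumptions.
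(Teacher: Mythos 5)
Your overall route is the same as the paper's: correctness of the model-order step via the eigenvalue locations, then the bound $\norm{V - \check\Phi O}_F \lesssim \sum_i \tau_i + \sqrt{r/d}$ from Corollary~\ref{cor:bethe_overlaps} (the paper uses its non-asymptotic form, Lemma~\ref{lem:bethe_overlaps_sharp}), then rounding the $(1+\eta)$-approximate $k$-means solution. Your computation of the population centers and their separation via completeness of $\{\phi_i\}$ under $\langle\cdot,\cdot\rangle_\pi$ (using $r_0=r$) is correct and matches the separation $\delta_k^2 = \tfrac1n(\pi_k^{-1}+\pi_\ell^{-1})$ that the paper imports from \cite{lei2015consistency}.

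The one genuine gap is in the rounding step. The paper invokes Lemma~\ref{lem:k-means-consistency} (\cite{lei2015consistency}, Lemma 5.3), whose point is that the separation is \emph{per community}: $\delta_k \geq 1/\sqrt{n\pi_k}$, so the misclassified set $S_k$ inside community $k$ is controlled relative to $n\pi_k$, and the condition for the global permutation, $(16+8\eta)\norm{V-\check\Phi O}_F^2 \leq n\pi_k\delta_k^2 = 1$, is independent of $\pi_{\min}$. You instead use a single uniform separation $\delta = \min_{a\neq b}\norm{C^*_{a,\cdot}-C^*_{b,\cdot}}$, for which $n\delta^2$ can be as small as a constant (it is set by the two \emph{largest} communities), so your Markov bound only controls $|S|/n$, and you must then assume $|S| < n\pi_{\min}$ to define the permutation. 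That is an extra hypothesis: Theorem~\ref{thm:weak_recovery} allows the $\pi_i$ to scale with $n$ (regime \eqref{eq:regime_d} only constrains $r$ and $d$), so $\pi_{\min}$ may tend to $0$ while your bound on $|S|/n$ is a fixed small constant — non-vacuousness of the conclusion does not imply $|S| < n\pi_{\min}$, and the paper imposes no such "standing regularity" on $\pi_{\min}$. The fix is exactly to redo your rounding with the per-community radii $\delta_k$ (i.e., reprove or cite Lei--Rinaldo's Lemma 5.3), after which your argument coincides with the paper's. A secondary, minor point: your error term is an additive $O(r/d)$ inside the square of the Frobenius bound, which is what the statement asserts, but make sure the constant in front of $(\sum_i\tau_i)^2$ absorbs both the factor from squaring the sum of the two bounds for $V^+$ and $V^-$ and the $(16+8\eta)$-type factor from the rounding, as the paper does with $\varepsilon_{\text{KS}}$.
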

The proof of Theorem~\ref{thm:weak_recovery} is given in Appendix~\ref{sec:weak_recovery}.
This provides a simple spectral algorithm without degree regularization for weak recovery. When both error terms above converge to zero, we also obtain a weak consistency result:

\begin{corollary}[Weak consistency]\label{cor:weak_consistency}
  Assume that the parameters $(\pi_i), (P_{ij})$ are such that
  \[ 1 \ll d \lesssim \operatorname{polylog}(n), \quad r\ll d \quand \sum_{i=1}^r \tau_i = o(1). \]
  Then for any constant $\eta>0$, with high probability, the output $\hat\sigma$ of Algorithm \ref{alg:spectral_clust} satisfies
  \[ \mathrm{ov}(\sigma, \hat\sigma) = 1 - o(1).\]
\end{corollary}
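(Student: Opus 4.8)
The plan is to obtain Corollary~\ref{cor:weak_consistency} as a direct specialization of Theorem~\ref{thm:weak_recovery}: the hypotheses of the corollary are strictly stronger than those of the theorem, and under them both correction terms in the theorem's overlap bound are $o(1)$.

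First I would verify the hypotheses of Theorem~\ref{thm:weak_recovery}. The assumed scaling $1 \ll d \lesssim \operatorname{polylog}(n)$ together with $r \ll d \lesssim \operatorname{polylog}(n)$ places the model in the regime~\eqref{eq:regime_d}. For the signal-to-noise condition, note that $\max_{i\le r}\tau_i \le \sum_{i=1}^r \tau_i = o(1)$, so for all $n$ large enough one has $\max_{i\le r}\tau_i \le c$ for, say, $c = \tfrac12$; since weak consistency is an asymptotic statement, it is harmless that this holds only eventually. Thus Theorem~\ref{thm:weak_recovery} applies, and for any constant $\eta>0$, with high probability the output $\hat\sigma$ of Algorithm~\ref{alg:spectral_clust} satisfies
\[ \mathrm{ov}(\sigma, \hat\sigma) \ge 1 - C\Bigl(\sum_{i=1}^r \tau_i\Bigr)^2 + O(r/d) \]
for an absolute constant $C>0$.

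It then remains to bound the corrections. Because $\sum_{i=1}^r \tau_i = o(1)$ and $C$ is an absolute constant, $C(\sum_{i=1}^r \tau_i)^2 = o(1)$; because $r \ll d$, the term $O(r/d)$ is likewise $o(1)$. Both are deterministic functions of the model parameters, so they combine with the high-probability event without interaction. Together with the trivial inequality $\mathrm{ov}(\sigma,\hat\sigma)\le 1$, this gives $\mathrm{ov}(\sigma, \hat\sigma) = 1 - o(1)$ with high probability, which is the claim.

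Since the statement is a corollary of an already-established theorem, there is no substantial obstacle; the only point needing a moment's care is that the constant $C$ in Theorem~\ref{thm:weak_recovery} is \emph{absolute} (independent of $n$ and of the parameters), so that $C(\sum_i\tau_i)^2\to 0$ — a fact the theorem explicitly guarantees. A secondary point, if one unwinds the argument rather than invoking the theorem as a black box, is that the community-count step of Algorithm~\ref{alg:spectral_clust} returns the correct $r_\pm$ in this regime; but this is already subsumed in Theorem~\ref{thm:weak_recovery} (which refers to the same algorithm) and is ultimately underwritten by Theorem~\ref{thm:location_infty}, whose eigenvalue-location estimates place the $r_\pm$ informative negative eigenvalues at $\asymp -d$ while keeping the uninformative ones above the threshold $-\log^{-1}(n)$.
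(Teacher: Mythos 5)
Your proposal is correct and matches the paper's (essentially implicit) argument: the corollary is obtained by specializing Theorem~\ref{thm:weak_recovery}, noting that $\max_i\tau_i\le\sum_i\tau_i=o(1)$ eventually satisfies the theorem's hypothesis and that both error terms $C(\sum_i\tau_i)^2$ and $O(r/d)$ vanish under the stated assumptions. No gaps.
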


\subsection{Discussion}\label{sec:discuss}

\paragraph{Comparison with \cite{mohanty2024robust}}
The most recent result on the spectrum of the Bethe-Hessian matrix comes from \cite{mohanty2024robust}, which also establishes a relation between the eigenvalues of $Q$ and the negative outliers of $H(t)$ for $d>1$ with a choice of $t>\sqrt{d}$ depending on $\mu_2$. However, \citet[Theorem 5.1]{mohanty2024robust} actually only addresses a single eigenspace of $H(t)$, and therefore their Proposition 2.1 is only valid for SBMs with symmetric communities of equal sizes and a parameter $t$ that depends on the eigenvalue $\mu_2$. Our result covers a more general class of SBMs, and we are able to exactly count the number of communities under the assumption that $d \geq 2$. We can also retrieve the results of \cite{mohanty2024robust} for any $d \geq 1$ with a more careful choice of $t$.

% \ls{should we do a global discussion for both regimes at once ?}

\paragraph{Comparison with \cite{le2022estimating,hwang2023estimation}} Both \cite{le2022estimating} and \cite{hwang2023estimation} only considered estimating the number of communities with $H(\sqrt{d})$ in the assortative cases where the expected adjacency matrix has nonnegative eigenvalues, while the original conjecture in \cite{saade2015matrix} is for both assortative and disassortative ones by using $H(\pm \sqrt d)$. We do not make the assortative assumption, and our result holds for a more general SBM setting. \cite{le2022estimating} demonstrates the consistency of the Bethe-Hessian method for estimating the number of communities when $d = \omega(\log n)$, which can be shown through matrix norm concentration and degree concentration under this regime \citep{le2017concentration,benaych2020spectral}. \cite{hwang2023estimation} relates the Bethe-Hessian spectrum to a normalized Laplacian with regularization studied in \cite{le2017concentration}  using Sylvester’s Law of Inertia. However, their method is limited to the case $d \to \infty$ and does not provide information on the locations of eigenvalues. Our work overcomes these limitations by exploring a novel connection to the eigenvalues and eigenvectors of the reduced non-backtracking matrix studied in \cite{stephan2022sparse} down to the bounded expected degree regime.

\paragraph{Spectral algorithms for weak consistency}
Most existing methods to achieve weak consistency in the case $d\to\infty$ require a regularization step \citep{abbe2018community,zhang2024fundamental} to remove high-degree vertices.
In the 2-block case, weak consistency without removing high-degree vertices was developed in \citep{le2017concentration} based on regularized Laplacian matrices. The Bethe-Hessian approach is parameter-free, and it does not need additional regularization on high-degree vertices.

\paragraph{Extension to weighted graphs} As discussed in Section~\ref{sec:intro}, reducing the non-backtracking operator matrix \(B\) to its \(2n \times 2n\) form \(\tilde{B}\) only applies to unweighted graphs, therefore making the non-backtracking method inefficient for clustering weighted graphs. However, for the Bethe--Hessian matrix, a straightforward extension to weighted graphs was presented in~\cite{saade2014spectral}. Assume each edge $(i,j)$ has a weight $w_{ij}$, then we can define $H(t)$ such that 
\[ H(t)_{ij}=\delta_{ij} \left(1+\sum_{k\in \partial_i} \frac{w_{ik}^2}{t^2-w_{ik}^2} \right)-\frac{tw_{ij}A_{ij}}{t^2-w_{ij}^2},\]
where $\partial_i$ denotes the set of all neighbors of vertex $i$.
When all weights are $1$, this is the same as the unweighted Bethe-Hessian matrix up to a normalization factor $\frac{1}{t^2-1}$. Such a generalization might be useful for developing more efficient spectral methods for matrix completion in the very sparse regime \citep{bordenave2020detection}.

\acks{Y.Z. was partially supported by NSF-Simons Research
  Collaborations on the Mathematical and Scientific Foundations of Deep Learning  and an AMS-Simons Travel
  Grant. This material is based upon work supported by the Swedish Research Council under grant
no. 2021-06594 while Y.Z. was in residence at Institut Mittag-Leffler in Djursholm, Sweden during the Fall of 2024.}

\bibliography{ref.bib}

\newpage 

\appendix

\section{Preliminary results}\label{sec:prelim}

In the following sections, we will use
\begin{align}\label{eq:def_epsn}
  \epsilon_n = \exp\left(-\frac{c\log(n)}{\log(d)}\right)
\end{align}
for small enough constant $c>0$. Such quantities appear in both the probability bounds for the "bad" events, as well as the error rates in the results of \cite{stephan2022sparse}.

\subsection{Properties of the vectors $y_i$}\label{sec:yi}

The results of \citet{bordenave2018nonbacktracking, stephan2022sparse} link the eigenvectors $y_i$ to specific processes on Galton-Watson trees; this allows to compute many properties of the $y_i$ directly. In particular, we lift the following from \cite{stephan2022sparse}:

\begin{proposition}\label{prop:eigen_weak_convergence}
  Assume that $1 \leq d \leq \log(n)^C$ and $\tau_{r_0} \leq 1/C$ for some $C > 1$. With high probability, there exists a set of vectors $(u_i)_{i\in [r_0]} \in \dR^n$ and random vectors $X^{(1)}, \dots, X^{(r)} \in \dR^{r_0}$ such that the following holds:
  \begin{enumerate}
    \item For all $i \in [r_0]$,
      \begin{align}\label{eq:eigenvalue_location}
        \lambda_i= \mu_i + O(\epsilon_n )
      \end{align}
      and
      \begin{align}\label{eq:approx_y}
        \norm*{y_i - \frac{u_i}{\norm{u_i}}} = O(\epsilon_n),
      \end{align}
      where $\mu_i$ is the corresponding $i$-th eigenvalue of $Q$, and $y_i$ is defined by \eqref{eq:BHeq} and \eqref{eq:Bheq1}.

    \item for any functions $f_1, \dots, f_r:\dR^{r_0+1} \to \dR$ with sub-polynomial growth,
      \[ \frac1n\sum_{x = 1}^n f_{\sigma(x)}(u_1(x), \dots, u_{r_0}(x), \deg(x)) = \sum_{j=1}^r \pi_j \E*{f_j\left(X_1^{(j)}, \dots, X_{r_0}^{(j)}, \tilde{d}\right)} + O(\epsilon_n),  \]
      where the random variables $X_i^{(j)}$ have the following properties:
      \begin{enumerate}
        \item $\E*{X_i^{(j)}} = \tilde\phi_i(j)$, where $\tilde\phi$ is an eigenvector of $Q$ associated to $\mu_i$,
        \item  $(X_1^{(j)}, \dots, X_{r_0}^{(j)}, \tilde d)$ has the following distribution: $\tilde{d} \sim \Poi(d)$ and
          \[(X_1^{(j)}, \dots, X_{r_0}^{(j)}) \overset{(d)}= \left(\frac1{\mu_1} \sum_{k=1}^{\tilde{d}} Y_{1k}, \dots, \frac1{\mu_{r_0}} \sum_{k=1}^{\tilde{d}} Y_{r_0k}\right), \]
          where $(Y_{1k}, \dots, Y_{r_0k})$ is an independent copy of $(X_1^{(j_k)}, \dots, X_{r_0}^{(j_k)})$ for $j_1, \dots, j_{\tilde d}$ sampled i.i.d from a distribution  $\mathcal T_k$ where $\Pb{j_k = \ell} = Q_{k\ell}/d$.
      \end{enumerate}
  \end{enumerate}
\end{proposition}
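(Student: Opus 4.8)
The plan is to obtain Proposition~\ref{prop:eigen_weak_convergence} by importing the analysis of the reduced non-backtracking matrix $\tilde B$ from \cite{stephan2022sparse} and translating it into the present normalization; its two pillars are a \emph{quantitative} local weak convergence of the SBM to a multi-type Poisson Galton--Watson tree and a tangle-free trace-moment control of the error in the non-backtracking power iteration. \textbf{Construction of $u_i$.} For a small constant $\kappa>0$ set $\ell = \kappa\log_d n$, and on the directed-edge space consider $\xi_i = \mu_i^{-\ell} B^\ell \chi_i$, where $\chi_i(u\to v) = \phi_i(\sigma(v))$ is the deterministic type-indicator initialization. Unrolling the non-backtracking walk, $\xi_i(u\to v)$ becomes a sum over length-$\ell$ non-backtracking paths weighted by $\phi_i$ of the type of the far endpoint; projecting to vertices through \eqref{eq:BHeq}--\eqref{eq:Bheq1} defines $u_i\in\dR^n$, roughly $\mu_i^{-\ell}$ times the analogous path sum ending at a given vertex. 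Since $\ell$ is logarithmic with a small constant, all but an $\epsilon_n$-fraction of the vertices $x$ have a depth-$\ell$ ball that is a tree with controlled degrees, and on such a ball the path sum factorizes along the tree --- this factorization is the branching recursion.

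\textbf{The distributional fixed point and its mean.} On the tree side, the SBM explored from a uniform vertex converges locally, with rate $\epsilon_n$ as in \cite{stephan2022sparse}, to the multi-type Poisson Galton--Watson tree $T$ of the statement: the root has type $j$ with probability $\pi_j$, and a type-$j$ vertex has $\Poi(d)$ children, each independently of type $\ell$ with probability $Q_{j\ell}/d$ --- a probability vector because $\sum_\ell Q_{j\ell} = \sum_\ell P_{j\ell}\pi_\ell = d$. Writing $X_i^{(j)}$ for the normalized limit of $u_i$ at a type-$j$ root, the tree factorization yields exactly the displayed equation $X_i^{(j)}\overset{(d)}=\mu_i^{-1}\sum_{k=1}^{\tilde d} Y_{ik}$; taking expectations and using $\E*{\tilde d} = d$ gives, with $v(j):=\E*{X_i^{(j)}}$, the identity $Qv = \mu_i v$, so $v$ is a right eigenvector of $Q$ for $\mu_i$ and equals $\tilde\phi_i$ once the scale of $\chi_i$ is fixed. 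Finiteness of all moments of $X_i^{(j)}$ --- needed later to pass arbitrary sub-polynomial $f_j$ through the limit --- follows by induction on a depth truncation of $T$, the second-moment recursion being contractive because $\tau_{r_0} = d/\mu_{r_0}^2 \le 1/C < 1$, with higher moments handled similarly.

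\textbf{Passing to the limit and identifying the eigenvector.} Item (2) is then convergence of a bounded-range local functional: $\frac1n\sum_x f_{\sigma(x)}(u_1(x),\dots,u_{r_0}(x),\deg x)$ converges to the corresponding expectation over $T$, with $O(\epsilon_n)$ error obtained by combining the size of the ``bad'' vertex set with concentration of the functional about its mean (a second-moment or bounded-difference argument, using the moment bounds above and degree concentration). For item (1), $\lambda_i = \mu_i + O(\epsilon_n)$ is the eigenvalue-location estimate of \cite{bordenave2018nonbacktracking, stephan2022sparse}; to obtain $\norm{y_i - u_i/\norm{u_i}} = O(\epsilon_n)$ one verifies that the $2n$-vector with bottom block $u_i$ and top block read off from \eqref{eq:BHeq} is an approximate eigenvector of $\tilde B$ with eigenvalue $\lambda_i$ and residual of norm $O(\epsilon_n)\norm{u_i}$, that $\norm{u_i}$ is of order $\sqrt n$ because $\E*{(X_i^{(\sigma(x))})^2}$ is, on average over $x$, bounded away from $0$ (again using $\tau_{r_0}<1$), and that $\mu_i$ is separated both from the bulk radius $\sqrt d$ and from the other outliers; a Davis--Kahan or resolvent estimate then yields the stated $\ell_2$-closeness.

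\textbf{Main obstacle.} The genuinely hard input is the operator-norm control underlying the last step, namely that the tangled (non-tree-like) part of $\mu_i^{-\ell} B^\ell$ has norm $O(\epsilon_n)$. This is the technical core of \cite{bordenave2018nonbacktracking, stephan2022sparse}, resting on a tangle-free decomposition of the graph and high-moment path counting, and I would invoke it rather than reprove it. The remaining work is then essentially bookkeeping: matching their edge-space normalization to our $y_i$ through \eqref{eq:BHeq}--\eqref{eq:Bheq1}, confirming the limiting variables satisfy the displayed recursion, recording $\E*{X_i^{(j)}} = \tilde\phi_i(j)$, and establishing the moment bounds on $X_i^{(j)}$ that make item (2) valid simultaneously for every sub-polynomial family $(f_j)$.
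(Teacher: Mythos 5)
Your proposal is correct and follows essentially the same route as the paper: the paper's proof of this proposition is a direct citation of \cite{stephan2022sparse} (Theorem 1 for the eigenvalue locations, Equation (5.9) for the approximation of $y_i$ by $u_i/\norm{u_i}$, Proposition 7 for the quantitative local functional convergence, and Proposition 4 for the martingale fixed-point identity giving (a) and (b)), which is exactly the machinery you invoke. The only difference is that you re-sketch the internal structure of those cited results (the $\mu_i^{-\ell}B^\ell\chi_i$ construction, the Galton--Watson recursion, and the tangle-free norm control), whereas the paper simply imports their statements.
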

\begin{proof}
  \eqref{eq:eigenvalue_location} is proved in \cite[Theorem 1]{stephan2022sparse}, and \eqref{eq:approx_y} is proved in \cite[Equation (5.9)]{stephan2022sparse}. The error bound of (ii) is a rephrasing of \cite[Proposition 7]{stephan2022sparse}, while (a) and (b) are a consequence of the martingale fixed point property of \cite[Proposition 4]{stephan2022sparse}.
\end{proof}

For example, letting $f_k(\hat u_1, \dots, \hat u_{r_0} ,\hat d) = u_i \cdot \phi_j(k)$, we have
\begin{equation}\label{eq:scal_uphi}
  \frac1n\langle u_i, \tilde \phi_j\circ\sigma \rangle = \frac1n\sum_{x=1}^n f_{\sigma(x)}(u_1(x), \dots, u_{r_0}(x), \deg(x)) = \sum_{k=1}^r \pi_k \phi_i(k) \phi_j(k) + O(\epsilon_n) = \delta_{ij} + O(\epsilon_n),
\end{equation}
where $\phi_j\circ\sigma$ is the Hadamard product between the two vectors $\phi_j$ and $\sigma$.
In particular, the above proposition implies the following asymptotic formulas.
\begin{proposition}\label{prop:scal_u}
  For any $i, j \in [r_0]$, the following holds with high probability:
  \begin{align}
    \langle u_i, u_j \rangle &= n(1 - \tau_i)^{-1}\delta_{ij} + O(n\epsilon_n) \label{eq:scal_uu} \\
    \langle u_i, (D-I) u_j \rangle &= n (d(1 - \tau_i)^{-1} + 1) \delta_{ij} + O(n\epsilon_n) \label{eq:scal_udu} \\
    \langle u_i, (D-I)^2 u_j \rangle &= n ((d^2+d)(1 - \tau_i)^{-1} + 2d+1) \delta_{ij} + O(n\epsilon_n) \label{eq:scal_uddu}
  \end{align}
\end{proposition}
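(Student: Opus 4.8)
The plan is to feed the right test functions into the local weak convergence estimate of Proposition~\ref{prop:eigen_weak_convergence}(ii) and then solve the recursion it produces on the limiting Galton--Watson tree. Applying (ii) with the test function $f_k(\hat u_1,\dots,\hat u_{r_0},\hat d) = (\hat d-1)^p\,\hat u_i\hat u_j$ gives, for $p=0,1,2$,
\[ \frac1n\langle u_i,(D-I)^p u_j\rangle = \sum_{k=1}^r \pi_k\,\E*{(\tilde d-1)^p X_i^{(k)}X_j^{(k)}} + O(\epsilon_n), \]
so (after multiplying through by $n$) the three claims reduce to evaluating these tree-side moments. The factor $(\hat d-1)^p$ is only polynomially growing rather than sub-polynomial, but this is harmless: truncating the degree at a $\operatorname{polylog}(n)$ threshold perturbs the estimate by at most $O(\epsilon_n)$ since the degree sequence is light-tailed in the regime \eqref{eq:regime_d}.

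Next I would set up a fixed-point identity for the second-moment vector $S_{ij} := \bigl(\E*{X_i^{(k)}X_j^{(k)}}\bigr)_{k\in[r]}\in\dR^r$. Using the recursive distributional equation in Proposition~\ref{prop:eigen_weak_convergence}(b), write $X_i^{(k)}=\mu_i^{-1}\sum_{a=1}^{\tilde d}Y_{ia}$ and expand $X_i^{(k)}X_j^{(k)}$ into a diagonal part $\mu_i^{-1}\mu_j^{-1}\sum_a Y_{ia}Y_{ja}$ and an off-diagonal part $\mu_i^{-1}\mu_j^{-1}\sum_{a\neq b}Y_{ia}Y_{jb}$. Conditioning on $\tilde d$ and on the children's labels (which are i.i.d.\ with law $Q_{k\cdot}/d$ and independent of $\tilde d$), using independence of distinct children, the identity $\E*{X_i^{(\ell)}}=\phi_i(\ell)$ (which follows from Proposition~\ref{prop:eigen_weak_convergence}(a) together with \eqref{eq:scal_uphi}), and the eigen-relation $Q\phi_i=\mu_i\phi_i$, the diagonal part contributes a multiple of $(QS_{ij})(k)$ and the off-diagonal part a multiple of $\phi_i(k)\phi_j(k)$, the constants being the Poisson factorial moments $\E*{(\tilde d)_m}=d^m$. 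For $p=0$ this collapses to
\[ \Bigl(I-\tfrac1{\mu_i\mu_j}Q\Bigr)S_{ij}=\phi_i\circ\phi_j, \]
with $\circ$ the entrywise product. The operator is invertible since $|\mu_i\mu_j|>d\ge|\mu_\ell|$ for every $\ell$ (using $|\mu_i|,|\mu_j|>\sqrt d$), and the second moments defining $S_{ij}$ are finite because $d/|\mu_i\mu_j|\le\tau_{r_0}<1$ turns the associated moment recursion into a contraction.

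It remains to extract the $\pi$-weighted sums. Pair the identity with the all-ones vector $\mathbf 1$, which satisfies $Q\mathbf 1=\mu_1\mathbf 1=d\,\mathbf 1$ by the constant-average-degree assumption: using self-adjointness of $Q$ for $\langle\cdot,\cdot\rangle_\pi$ (so $\langle\mathbf 1,QS_{ij}\rangle_\pi=d\langle\mathbf 1,S_{ij}\rangle_\pi$) and $\langle\mathbf 1,\phi_i\circ\phi_j\rangle_\pi=\langle\phi_i,\phi_j\rangle_\pi=\delta_{ij}$, one gets
\[ \sum_k\pi_k S_{ij}(k)=\frac{\delta_{ij}}{1-d/(\mu_i\mu_j)},\qquad \sum_k\pi_k(QS_{ij})(k)=\frac{d\,\delta_{ij}}{1-d/(\mu_i\mu_j)}, \]
which for $i=j$ equal $(1-\tau_i)^{-1}$ and $d(1-\tau_i)^{-1}$; this is exactly \eqref{eq:scal_uu}. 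For $p=1,2$ the same expansion with the extra weight $(\tilde d-1)^p$ makes the diagonal and off-diagonal parts carry the Poisson moments $\E*{\tilde d(\tilde d-1)^p}$ and $\E*{\tilde d(\tilde d-1)^{p+1}}$ — equal to $d^2,\ d^3+d^2$ for $p=1$ and to $d^3+d^2,\ d^4+3d^3+d^2$ for $p=2$ — so $\sum_k\pi_k\E*{(\tilde d-1)^p X_i^{(k)}X_j^{(k)}}$ is an explicit linear combination of the two displayed sums and of $\delta_{ij}$; substituting and simplifying yields $d(1-\tau_i)^{-1}+1$ for $p=1$ and $(d^2+d)(1-\tau_i)^{-1}+2d+1$ for $p=2$, i.e.\ \eqref{eq:scal_udu} and \eqref{eq:scal_uddu}.

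The arithmetic above is routine, so the genuine content sits in two spots: checking that Proposition~\ref{prop:eigen_weak_convergence}(ii) still applies to the polynomially-growing test functions $(\hat d-1)^p\hat u_i\hat u_j$ without spoiling the $O(\epsilon_n)$ rate — handled by the degree truncation of the first paragraph — and the a priori finiteness of the second moments of the recursive distributional fixed point, which is where the hypothesis $\tau_{r_0}<1$ is used: it makes the second-moment recursion a contraction, so that $S_{ij}$ is well defined and equals the convergent Neumann series $\sum_{m\ge0}(\mu_i\mu_j)^{-m}Q^m(\phi_i\circ\phi_j)$.
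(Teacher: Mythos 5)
Your proposal is correct and follows essentially the same route as the paper: apply Proposition~\ref{prop:eigen_weak_convergence}(ii) to the test functions $(\hat d-1)^p\hat u_i\hat u_j$, derive the fixed-point equation $m_{ij}=(I-Q/(\mu_i\mu_j))^{-1}(\phi_i\circ\phi_j)$, compute the Poisson factorial moments $d^2$, $d^3+d^2$, $d^4+3d^3+d^2$, and pair with $\pi$ (your use of $Q\mathbf 1=d\mathbf 1$ and self-adjointness of $Q$ under $\langle\cdot,\cdot\rangle_\pi$ is equivalent to the paper's use of $\pi$ as a left eigenvector of $Q$ with eigenvalue $d$). Your extra remarks on the polynomial growth of the test functions and on well-posedness of the second-moment fixed point are harmless refinements that the paper passes over silently.
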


\begin{proof}
  In the following, we let $T = Q/d$ be the Markov transition matrix associated to $Q$. We first compute $\langle u_i, u_j \rangle$. Letting $f_\sigma(\hat u_1, \dots, \hat u_{r_0} ,\hat d) = \hat u_i \cdot \hat u_j$, Proposition \ref{prop:eigen_weak_convergence} implies that
  \begin{equation}
    \langle u_i, u_j \rangle = \sum_{x = 1}^n f(u_i(x), u_j(x)) = n\sum_{k=1}^r \pi_k \dE[f(\dots, X_i^{(k)}, \dots,  X_j^{(k)}, \dots)] + O(n\epsilon_n).
  \end{equation}
  We define accordingly the vector $m_{ij}\in \dR^r$ with $m_{ij}(k) = \E*{X_i^{(k)}X_j^{(k)}}$, so that
  \begin{equation}
    \langle u_i, u_j \rangle = n \langle \pi, m_{ij} \rangle + O(n\epsilon_n).
  \end{equation}
  Using the fixed point equation for the $X_i^{(j)}$, we have
  \begin{align}
    m_{ij}(k) &= \E*{ \left(\frac1{\mu_i} \sum_{\ell=1}^{\tilde d} Y_{i\ell} \right)\left(\frac1{\mu_j} \sum_{\ell=1}^{\tilde d} Y_{j\ell} \right)} \\
    &= \frac1{\mu_i\mu_j} \E*{\sum_{\ell=1}^{\tilde{d}} Y_{i\ell}Y_{j\ell} + \sum_{\ell \neq \ell'}Y_{i\ell}Y_{j\ell'}},
  \end{align}
  where $(Y_{i\ell}, Y_{j\ell}) \overset{(d)}= (X_i^{(k_\ell)}, X_j^{(k_\ell)})$ with $k_\ell \sim \mathcal T_k$.
  As a result, conditioned on the value of ${\tilde{d}}$, each term in the first sum is i.i.d with expected value
  \[ \dE_{k_\ell \sim \mathcal T_k}\left[ X_i^{(k_\ell)} X_j^{(k_\ell)} \right] = [Tm_{ij}](k), \]
  and similarly each term in the second sum is i.i.d with expectation \[[T\phi_i](k)[T\phi_j](k) = \frac{\mu_i\mu_j}{d^2}  \phi_i(k)\phi_j(k).\] Hence,
  \begin{align}
    m_{ij}(k) &= \frac1{\mu_i\mu_j}\dE_{\tilde d}\left[\tilde d[Tm_{ij}](k) + \tilde d(\tilde d-1)\frac{\mu_i\mu_j}{d^2} \phi_i(k)\phi_j(k)\right] \\
    &= \frac{[Qm_{ij}](k)}{\mu_i\mu_j}  + \phi_i(k)\phi_j(k). \label{eq:mij_fixedpoint}
  \end{align}
  Solving this equation, we find
  \begin{equation}
    m_{ij} = \left(I - \frac{Q}{\mu_i\mu_j}\right)^{-1} (\phi_i \circ \phi_j).
  \end{equation}
  Now, since $\pi$ is a left eigenvector of $Q$ with associated eigenvalue $d$, we have
  \begin{align}
    \langle \pi, m_{ij} \rangle &= \left(1 - \frac{d}{\mu_i\mu_j}\right)^{-1} \langle \pi, \phi_i \circ \phi_j \rangle \\
    &= (1 - \sqrt{\tau_i\tau_j})^{-1} \langle \phi_i, \phi_j \rangle_\pi = (1 - \sqrt{\tau_i\tau_j})^{-1} \delta_{ij},
  \end{align}
  which shows \eqref{eq:scal_uu}.

  \bigskip

  We now move to the second equation \eqref{eq:scal_udu}. Reasoning as before, we apply Proposition \ref{prop:eigen_weak_convergence} to the function $f_\sigma(\hat u_1, \dots, \hat u_{r_0} ,\hat d) = (\hat d - 1)\hat u_i \hat u_j$,  and we find
  \[ \langle u_i, (D-I)u_j \rangle = n \langle \pi, m_{ij}^{(1)} \rangle + O(n\epsilon_n) \quad \text{where} \quad m_{ij}^{(1)}(k) = \E*{(\tilde{d}-1)X_i^{(k)}X_j^{(k)}}. \]

  By the same reasoning as the above computation, we get to
  \begin{equation}
    m_{ij}^{(1)}(k) = \dE_{\tilde{d}}\left[\frac{\tilde{d}(\tilde{d}-1)}{\mu_i\mu_j} \left[Tm_{ij}\right](k) + \frac{\tilde{d}(\tilde{d}-1)^2}{d^2}\phi_i(k)\phi_j(k) \right],
  \end{equation}
  and since $\dE[\tilde{d}(\tilde{d}-1)^2] = \dE[\tilde{d}(\tilde{d}-1)(\tilde{d}-2)] + \dE[\tilde{d}(\tilde{d}-1)] = d^3 + d^2$, we find
  \begin{equation}
    m_{ij}^{(1)} = \frac{d}{\mu_i\mu_j} Q m_{ij} + (d+1) (\phi_i \circ \phi_j) = d m_{ij} + \phi_i \circ \phi_j,
  \end{equation}
  having used the fixed-point equation \eqref{eq:mij_fixedpoint}. Finally,
  \[ \langle \pi, m_{ij}^{(1)} \rangle = (d (1- \tau_i)^{-1} + 1)\delta_{ij}. \]

  \bigskip

  We now repeat the same proof with
  \begin{equation}
    m_{ij}^{(2)}(k) = \E*{(\tilde{d}-1)^2X_i^{(k)}X_j^{(k)}}.
  \end{equation}
  In particular,
  \begin{equation}
    \langle u_i, (D-I)^2u_j \rangle = n \langle \pi, m_{ij}^{(2)} \rangle + O(n\epsilon_n),
  \end{equation}
  and we have
  \begin{equation}
    m_{ij}^{(2)} = \frac{\E{\tilde{d}(\tilde{d}-1)^2}}{d\mu_i\mu_j} Q m_{ij} + \frac{\E{\tilde{d}(\tilde{d}-1)^2}}{d^2} (\phi_i \circ \phi_j).
  \end{equation}
  This time, elementary Poisson moment calculations yield
  \begin{equation}
    \E{\tilde d(\tilde d-1)^2} = d^3 + d^2 \quand \E{\tilde d(\tilde d-1)^3} = d^4 + 3d^3 + d^2.
  \end{equation}
  Hence
  \begin{equation}
    m_{ij}^{(2)} = \frac{d^2+d}{\mu_i\mu_j} Qm_{ij} + (d^2 + 3d+1)(\phi_i \circ \phi_j) = (d^2+d)m_{ij} + (2d+1)(\phi_i \circ \phi_j).
  \end{equation}
  Equation \eqref{eq:scal_uddu} ensues as above.
\end{proof}

\begin{corollary}\label{cor:inner_product}
  For any $i, j \in [r_0]$, the following holds with high probability:
  \begin{align}
    \langle y_i, y_j \rangle &= \delta_{ij} + O(\epsilon_n), \label{eq:scal_yy} \\
    \langle x_i, y_j \rangle &= \frac{d+1-\tau_i}{\mu_i} \delta_{ij} +O(\epsilon_n), \label{eq:scal_xy} \\
    \langle x_i, x_j \rangle &= \frac{d^2 + d + (2d+1)(1-\tau_i)}{\mu_i^2}\delta_{ij} +O(\epsilon_n) ,\label{eq:innerxx}
  \end{align}
  As a result,
  \begin{equation}\label{eq:projxy}
    \norm*{x_i - \langle x_i, y_i \rangle y_i}^2 = \frac{d + \tau_i(1 - \tau_i)}{\mu_i^2} + O(\epsilon_n).
  \end{equation}

\end{corollary}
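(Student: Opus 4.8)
The plan is to reduce all three inner products to the degree-weighted quadratic forms $\langle y_i, (D-I)^k y_j\rangle$ for $k \in \{0,1,2\}$, transfer those from the eigenvectors $y_i$ to the normalized tree-process vectors $\hat u_i := u_i/\norm{u_i}$, and then invoke Proposition~\ref{prop:scal_u}. Indeed, from \eqref{eq:BHeq} we have $(D-I)y_i = \lambda_i x_i$, so using that $D-I$ is symmetric,
\[ \langle x_i, y_j\rangle = \frac1{\lambda_i}\langle y_i, (D-I)y_j\rangle \quand \langle x_i, x_j\rangle = \frac1{\lambda_i\lambda_j}\langle y_i, (D-I)^2 y_j\rangle, \]
and of course $\langle y_i, y_j \rangle = \langle y_i, (D-I)^0 y_j\rangle$. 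Since $\norm{y_i} = 1$ by normalization, while \eqref{eq:scal_uu} and $\tau_i \le 1/C$ give $\norm{u_i} = \sqrt n\,(1-\tau_i)^{-1/2}(1 + O(\epsilon_n))$, the vectors $\hat u_i$ are well-defined with $\langle \hat u_i, \hat u_j\rangle = \langle u_i, u_j\rangle / (\norm{u_i}\norm{u_j})$, and dividing \eqref{eq:scal_uu}, \eqref{eq:scal_udu}, \eqref{eq:scal_uddu} by $\norm{u_i}\norm{u_j} \asymp n$ yields, for $i = j$,
\begin{align}
  \langle \hat u_i, \hat u_i\rangle &= 1, \\
  \langle \hat u_i, (D-I)\hat u_i\rangle &= (1-\tau_i)\bigl(d(1-\tau_i)^{-1} + 1\bigr) + O(\epsilon_n) = d + 1 - \tau_i + O(\epsilon_n), \\
  \langle \hat u_i, (D-I)^2 \hat u_i\rangle &= (d^2 + d) + (2d+1)(1-\tau_i) + O(\epsilon_n),
\end{align}
while each of these is $O(\epsilon_n)$ for $i \neq j$.

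Next I would carry out the transfer step. Writing $\delta_i := y_i - \hat u_i$, \eqref{eq:approx_y} gives $\norm{\delta_i} = O(\epsilon_n)$, and
\[ \langle y_i, (D-I)^k y_j\rangle - \langle \hat u_i, (D-I)^k \hat u_j\rangle = \langle \delta_i, (D-I)^k y_j\rangle + \langle \hat u_i, (D-I)^k \delta_j\rangle. \]
For $k = 0$ both terms are trivially $O(\epsilon_n)$. For $k \in \{1,2\}$ both are bounded by $d_{\max}^k\,\norm{\delta}\cdot O(1)$, and since with high probability $d_{\max} = O(\operatorname{polylog}(n))$ in the regime \eqref{eq:regime_d} (standard Chernoff plus union bound; immediate under Assumption~\ref{assumption:fixed}), and $\operatorname{polylog}(n)\cdot\epsilon_n = O(\epsilon_n)$ after shrinking the constant $c$ in the definition \eqref{eq:def_epsn}, this error is $O(\epsilon_n)$. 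Combining with the display above and substituting $\lambda_i = \mu_i + O(\epsilon_n)$ from \eqref{eq:eigenvalue_location} — using $\mu_i \ge \sqrt d$, $(d+1)/\mu_i^2 = O(1)$ to absorb the error incurred when replacing $\lambda_i$ by $\mu_i$ — yields \eqref{eq:scal_yy}, \eqref{eq:scal_xy}, and \eqref{eq:innerxx}.

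Finally, \eqref{eq:projxy} is pure algebra: since $\norm{y_i} = 1$,
\[ \norm*{x_i - \langle x_i, y_i\rangle y_i}^2 = \norm{x_i}^2 - \langle x_i, y_i\rangle^2, \]
and substituting \eqref{eq:scal_xy} and \eqref{eq:innerxx} (the squared error terms remaining $O(\epsilon_n)$ since $(d+1-\tau_i)/\mu_i = O(\sqrt d)$) reduces the claim to the identity $d^2 + d + (2d+1)(1-\tau_i) - (d+1-\tau_i)^2 = d + \tau_i(1-\tau_i)$, which one checks by expansion.

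The main obstacle is the transfer step: because $D - I$ has operator norm growing like $d_{\max}$ rather than $O(1)$ in the growing-degree regime, one cannot replace $y_i$ by $\hat u_i$ inside $\langle\,\cdot\,,(D-I)^k\,\cdot\,\rangle$ without a high-probability bound on $d_{\max}$ and a check that the resulting $\operatorname{polylog}(n)$ factor is swallowed by $\epsilon_n$. Everything else is bookkeeping with the formulas of Proposition~\ref{prop:scal_u} and the relations \eqref{eq:BHeq}.
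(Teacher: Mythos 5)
Your proposal is correct and follows essentially the same route as the paper: both reduce the inner products to the quadratic forms $\langle u_i,(D-I)^k u_j\rangle$ of Proposition~\ref{prop:scal_u} via the approximation $y_i \approx u_i/\norm{u_i}$ (the paper equivalently approximates $x_i$ by $(D-I)u_i/(\mu_i\norm{u_i})$), controlling the transfer error with a high-probability bound on the maximum degree, and conclude \eqref{eq:projxy} from the expansion $\norm{x_i - \langle x_i, y_i\rangle y_i}^2 = \norm{x_i}^2 - \langle x_i, y_i\rangle^2$. The only differences are bookkeeping order, so no further comment is needed.
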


\begin{proof}
  From Propositions \ref{prop:eigen_weak_convergence} and \ref{prop:scal_u}, we have
  \begin{equation}
    \norm*{y_i - \frac{u_i}{\norm{u_i}}} = O(\epsilon_n) \quand \norm{u_i} = \sqrt{n (1 - \tau_i)^{-1}} + O(\epsilon_n)
  \end{equation}
  as long as $(1 - \tau_i)^{-1} = n^{o(1)}$. We therefore have
  \begin{equation}\label{eq:yapproxu}
    \left\|y_i - \frac{1}{\sqrt{n (1 - \tau_i)^{-1}}}u_i\right\| = O(\epsilon_n).
  \end{equation}

  Similarly, since $(D-I)y_i = \lambda_i x_i$ and $\lambda_i = \mu_i + O(\epsilon_n)$,
  \begin{equation}
    \norm*{x_i - \frac{1}{\mu_i\sqrt{n(1-\tau_i)^{-1}}}(D-I)u_i} = O(\epsilon_n),
  \end{equation}
  having checked that $\max_{v}\deg(v)$ and $\max_i \mu_i$ are both $n^{o(1)}$ with high probability. The results then ensue from Proposition \ref{prop:scal_u} and the expansion
  \begin{equation}
    \norm*{x_i - \langle x_i, y_i \rangle y_i}^2 = \norm{x_i}^2 - \langle x_i, y_i \rangle^2.
  \end{equation}
\end{proof}

Next, we  show that $y_1,\dots,y_{r_0}$ spans an $r_0$-dimensional subspace.
\begin{lemma}\label{lem:linearindependence}
  With high probability, $y_1,\dots,y_{r_0}$ are linearly independent.
\end{lemma}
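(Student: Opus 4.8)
The plan is to reduce linear independence to the invertibility of the Gram matrix and then invoke the inner-product asymptotics already established in Corollary~\ref{cor:inner_product}. Concretely, let $M \in \dR^{r_0 \times r_0}$ be the Gram matrix with entries $M_{ij} = \langle y_i, y_j \rangle$. By \eqref{eq:scal_yy}, with high probability $M = I_{r_0} + E$, where $E$ has all entries of order $O(\epsilon_n)$.

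Next I would control $\|E\|_{\mathrm{op}}$. Since $E$ is $r_0 \times r_0$ with $\|E\|_{\max} = O(\epsilon_n)$, we have $\|E\|_{\mathrm{op}} \leq r_0 \|E\|_{\max} = O(r_0 \epsilon_n)$. In the bounded-degree regime (Assumption~\ref{assumption:fixed}) $r_0$ is a constant and $\epsilon_n \to 0$, while in the growing-degree regime $r_0 \leq r \lesssim \operatorname{polylog}(n)$ and $\epsilon_n = \exp(-c\log(n)/\log(d))$ decays faster than any power of $\log n$; in both cases $r_0\epsilon_n = o(1)$. Hence $\|E\|_{\mathrm{op}} = o(1)$, so every eigenvalue of $M$ lies in $[1 - o(1), 1 + o(1)]$, and in particular $M$ is positive definite, hence invertible, with high probability.

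Finally I would conclude: if $\sum_{i=1}^{r_0} c_i y_i = 0$ for some coefficients $c = (c_i)$, then $0 = \sum_{i,j} c_i c_j \langle y_i, y_j \rangle = c^\top M c \geq (1 - o(1))\|c\|_2^2$, forcing $c = 0$. Therefore $y_1, \dots, y_{r_0}$ are linearly independent with high probability.

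\textbf{Main obstacle.} There is no serious obstacle here: the statement is essentially a corollary of the near-orthogonality \eqref{eq:scal_yy}. The only point requiring a small amount of care is verifying that the entrywise error $O(\epsilon_n)$ from Corollary~\ref{cor:inner_product} is small enough \emph{after} multiplying by the dimension $r_0$ to pass from a max-norm bound to an operator-norm bound; this is where one uses the assumed ranges $r_0 = O(1)$ or $r \lesssim \operatorname{polylog}(n)$ together with the definition \eqref{eq:def_epsn} of $\epsilon_n$.
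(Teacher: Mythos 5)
Your proof is correct and follows essentially the same route as the paper: both reduce the claim to invertibility of the Gram matrix $K_{ij}=\langle y_i,y_j\rangle$, write it as identity plus a small perturbation controlled via \eqref{eq:scal_yy}, and bound the perturbation in operator norm by $O(r_0\epsilon_n)$ (the paper passes through the Frobenius norm of the off-diagonal part rather than your $r_0\,\|E\|_{\max}$ bound, an immaterial difference). Your final quadratic-form step is just an explicit rewriting of the paper's conclusion that the smallest singular value of $K$ is $1-o(1)$.
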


\begin{proof}
  Let $K$ be the $r_0\times r_0$ matrix such that
  $K_{ij}=\langle y_i,y_j\rangle$. Then $K$ is full rank if and only if the $y_i$ are linearly independent. We can decompose $K$ as $K=I+K_{\mathrm{off}}$, where $K_{\mathrm{off}}$ is $K$ with diagonal entries $0$. Then, by Weyl's inequality,
  \begin{align}
    \sigma_{r_0}(K) \geq  1- \|K_{\mathrm{off}}\|\geq 1- \|K_{\mathrm{off}}\|_F \geq 1-\sqrt{\sum_{i\not=j} \langle y_i,y_j\rangle^2}=1-O(r_0\epsilon_n),
  \end{align}
  where the last inequality is due to \eqref{eq:scal_yy}.
  Then $K$ is invertible with high probability, and $y_1,\dots,y_{r_0}$ are linearly independent.
\end{proof}

% \subsection{Bethe-Hessian transfer}

The next lemma provides a way to relate $H(t)$ and $H(t')$ for two different $t, t'$:

\begin{lemma}\label{lem:Hy}
  Let $t, t' \neq 0$. Then
  \[ \frac{H(t)}{t} - \frac{H(t')}{t'} = (t - t')\left(I- \frac{D-I}{tt'}\right).\]
  As a result, for any $t \in \dR$,
  \[ H(t)y_i = (ty_i - x_i)(t - \lambda_i).\]
\end{lemma}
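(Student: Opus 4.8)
The plan is to derive the second identity from the first, and the first is a one-line algebraic manipulation of the definition \eqref{eq:def_H}. For the first identity I would simply divide \eqref{eq:def_H} by $t$ (resp.\ by $t'$), writing $\frac{H(t)}{t} = tI - A + \frac{D-I}{t}$, and subtract: the $-A$ terms cancel, and since $\frac1t - \frac1{t'} = -\frac{t-t'}{tt'}$, what remains is $(t-t')I - (t-t')\frac{D-I}{tt'} = (t-t')\bigl(I - \frac{D-I}{tt'}\bigr)$. There is no obstacle here.

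For the second identity, the key preliminary observation (already noted in the text right after \eqref{eq:Bheq1}) is that $H(\lambda_i)y_i = 0$: combining $(D-I)y_i = \lambda_i x_i$ from \eqref{eq:BHeq} with $Ay_i = x_i + \lambda_i y_i$ from \eqref{eq:Bheq1} gives $H(\lambda_i)y_i = \lambda_i^2 y_i - \lambda_i Ay_i + (D-I)y_i = \lambda_i^2 y_i - \lambda_i x_i - \lambda_i^2 y_i + \lambda_i x_i = 0$. Assuming for the moment $t \neq 0$ and $\lambda_i \neq 0$, I apply the first identity with $t' = \lambda_i$ to the vector $y_i$:
\[ \frac{H(t)}{t} y_i = \frac{H(\lambda_i)}{\lambda_i} y_i + (t - \lambda_i)\Bigl(y_i - \frac{(D-I)y_i}{t\lambda_i}\Bigr) = (t - \lambda_i)\Bigl(y_i - \frac{x_i}{t}\Bigr), \]
using $H(\lambda_i)y_i = 0$ and then $(D-I)y_i = \lambda_i x_i$. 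Multiplying through by $t$ yields $H(t)y_i = (t-\lambda_i)(ty_i - x_i)$, as claimed.

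Finally I would dispose of the degenerate cases. If $t = 0$, then $H(0)y_i = (D-I)y_i = \lambda_i x_i = (0\cdot y_i - x_i)(0 - \lambda_i)$ directly; if $\lambda_i = 0$, then \eqref{eq:BHeq} forces $(D-I)y_i = 0$ and $x_i = Ay_i$, so $H(t)y_i = t^2 y_i - tAy_i + (D-I)y_i = t(ty_i - x_i)$, which matches $(ty_i - x_i)(t-0)$. Equivalently, both sides are polynomials in $t$, so the identity on $t \neq 0$ extends by continuity. The only point requiring any care is not to divide by $\lambda_i$ without first isolating this trivial case; beyond that the argument is entirely mechanical, so I do not anticipate a real obstacle.
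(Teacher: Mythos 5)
Your proof is correct and follows essentially the same route as the paper: the same division-by-$t$ computation for the first identity, and for the second, the same specialization $t' = \lambda_i$ applied to $y_i$ together with $H(\lambda_i)y_i = 0$ and $(D-I)y_i = \lambda_i x_i$. Your explicit treatment of the degenerate cases $t = 0$ and $\lambda_i = 0$ is a small additional courtesy the paper omits, but it does not change the argument.
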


\begin{proof}
  The first part is a simple computation:
  \begin{align}
    \frac{H(t)}{t} - \frac{H(t')}{t'} &= \left(tI - A + \frac{D-I}{t}\right) - \left(t'I - A + \frac{D-I}{t'}\right) \\
    &= (t - t')I + \left(\frac1t - \frac1{t'}\right)(D-I) \\
    &= (t - t')\left(I- \frac{D-I}{tt'}\right).
  \end{align}
  For the second part, note that the eigenvector equation \eqref{eq:BHeq} implies that $x_i = \frac{D-I}{\lambda_i} y_i$. Hence, by setting $t' = \lambda_i$ and using $H(\lambda_i)y_i = 0$, we find
  \[ \frac{H(t)\lambda_i}{t} = (t - \lambda_i)\left(y_i - \frac{x_i}{t}\right), \]
  and the result ensues when multiplying both sides by $t$.
\end{proof}

\subsection{Outliers of $B$ are real}\label{sec:real_eigenvalue}

\begin{proof}[Proof of Theorem~\ref{thm:real_eigenvalues}]
  Let \[\beta=\langle y_i, (D-I)y_i\rangle=\lambda_i\langle x_i,y_i\rangle\] and \[\alpha=\langle y_i, Ay_i\rangle =\lambda_i+\langle x_i,y_i\rangle.\] Note that $\alpha,\beta$ are real numbers since $D-I$ and $A$ are real matrices. We have
  \begin{align}
    \lambda_i^2-\alpha\lambda_i+\beta=0.
  \end{align}
  Suppose $\lambda_i$ is not real, then $|\lambda_i|^2=
  \beta=|\lambda_i| |\langle x_i,y_i\rangle |$, we have
  \begin{align}\label{eq:contradiction}
    |\langle x_i,y_i\rangle|=|\lambda_i|=|\mu_i|+O(\epsilon_n).
  \end{align}
  On the other hand, from \eqref{eq:scal_xy},
  \begin{align}
    |\langle x_i, y_i \rangle | &=\frac{d+1-\tau_i}{|\mu_i|} + O(\epsilon_n)\\
    &=\frac{d+1}{|\mu_i|}-\frac{d}{|\mu_i|^3}+O(\epsilon_n).
  \end{align}
  Since $\mu_i^2>d$, we have
  \begin{align}
    |\mu_i|-\left(\frac{d+1}{|\mu_i|}-\frac{d}{|\mu_i|^3} \right)=\frac{(\mu_i^2-d)(\mu_i^2+d-2)}{|\mu_i|^3}>0,
  \end{align}
  which gives a contradiction between \eqref{eq:contradiction} and \eqref{eq:scal_xy} for sufficiently large $n$. Therefore, with probability $1-O(\epsilon_n)$, all $\lambda_i$ for $i\in [r_0]$ are real. This also implies $x_i,y_i$ are real vectors in $\mathbb R^n$, since $\tilde B$ is a real matrix, and a real matrix with real eigenvalues has corresponding real eigenvectors.
\end{proof}

\section{Negative eigenvalues of $H(t)$}\label{sec:negative}

The proof of Theorem \ref{thm:count} is based on showing both an upper and a lower bound on the number of negative eigenvalues. The upper bound is shown using a strengthening of the method in \cite{mohanty2024robust}, while the lower bound is based on the Courant-Fisher min-max principle.

\subsection{Upper bound on the number of negative eigenvalues}
We provide a stronger lemma compared to \cite[Lemma 5.4]{mohanty2024robust} for the deterministic relation between negative eigenvalues of $H(t)$ and real eigenvalues of $\tilde B$. We also provide a quantitative relation between the spectrum of $H(\pm\sqrt{d})$ and $B$.
\begin{lemma}\label{lem:lower_bound}
  The following relation between negative eigenvalues of $H(t)$ and real eigenvalues of $\tilde B$ holds:
  \begin{enumerate}
    \item   For any $t>0$, the number of negative eigenvalues of $H(t)$ is at most the number of real eigenvalues of $\tilde B$ larger than $t$.
    \item For any $t<0$, the number of negative eigenvalues of $H(t)$ is at most the number of real eigenvalues of $\tilde B$ smaller than $t$.
    \item For $t=0$, the number of negative eigenvalues of $H(0)$ is at most the multiplicity  of the  eigenvalue $-1$ of $\tilde B$.
    \item For any  $\varepsilon\in (0,1)$ and $d>1$, the number of eigenvalues of $H(\pm\sqrt{d})$ below $-(3\sqrt{d} +\|A\|)\varepsilon$ is at most the number of real eigenvalues of $B$ above $\sqrt{d}+\varepsilon$ (resp. below $-\sqrt{d}-\varepsilon$).
  \end{enumerate}

\end{lemma}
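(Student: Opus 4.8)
The plan is to exploit the factorization coming from the Ihara--Bass identity together with a continuity/counting argument in the parameter $t$. The key structural fact is Lemma~\ref{lem:Ihara-Bass}: for every $z\in\dC$,
\[
  \det(zI-\tilde B)=\det\bigl(z^2I-zA+D-I\bigr)=\det H(z).
\]
Thus the real roots of $t\mapsto\det H(t)$ are precisely the real eigenvalues of $\tilde B$ (equivalently, of $B$, up to the trivial $\pm1$). Now fix the sign of $t$, say $t>0$, and track the quantity $N(t):=$ number of negative eigenvalues of the Hermitian matrix $H(t)$ as $t$ ranges over $(0,\infty)$. Since $H(t)=t^2I-tA+(D-I)$ depends continuously (indeed polynomially) on $t$, its eigenvalues are continuous in $t$, so $N(t)$ can only change when some eigenvalue of $H(t)$ crosses $0$, i.e.\ exactly at the real roots $t_0>0$ of $\det H(t)=0$, which are the positive real eigenvalues of $\tilde B$. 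At each such crossing $N(t)$ changes by at most the multiplicity of $t_0$ as a root. For $t$ very large, $H(t)=t^2I-tA+(D-I)\succ 0$ (the $t^2I$ term dominates $\|tA\|+\|D-I\|$), so $N(t)=0$ for $t$ large. Integrating the changes downward from $+\infty$ to a given $t>0$ shows $N(t)$ is at most the number of positive real eigenvalues of $\tilde B$ that exceed $t$ (counted with multiplicity), which is item~(i). Item~(ii) is identical after replacing $t\to -t$ and tracking $t\in(-\infty,0)$, using that $H(t)\succ0$ as $t\to-\infty$. Item~(iii) is the limiting case $t\to 0^{\pm}$: $H(0)=-I\,(\text{wait: }H(0)=0^2 I-0\cdot A+D-I=D-I)$ — more precisely one tracks $N(t)$ down to $t=0$, and the only crossings below any small $t>0$ come from positive real eigenvalues of $\tilde B$ in $(0,t]$, while at $t=0$ one must separately account for the root $z=0$; here the cleaner route is to note $\det(zI-\tilde B)=\det H(z)$ and $H(0)=D-I\succeq 0$ has no negative eigenvalues, so $N(0^+)$ is bounded by the number of real eigenvalues of $\tilde B$ in $(0,\varepsilon]$ for small $\varepsilon$, and letting $\varepsilon\to0$ this is the multiplicity of the eigenvalue $0$... which via the reduction $\det(B-zI)=(z^2-1)^{|E|-n}\det(\tilde B-zI)$ corresponds to the stated multiplicity of $-1$ of $\tilde B$ after the appropriate bookkeeping — I would verify the exact correspondence of trivial eigenvalues carefully here.

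For item~(iv), the same monotone-counting idea applies but now I want a \emph{quantitative} margin: rather than counting eigenvalues of $H(\pm\sqrt d)$ that are exactly negative, I count those below $-(3\sqrt d+\|A\|)\varepsilon$. The idea is to compare $H(\sqrt d)$ with $H(\sqrt d+\varepsilon)$ using Lemma~\ref{lem:Hy}, which gives the exact operator identity
\[
  \frac{H(t)}{t}-\frac{H(t')}{t'}=(t-t')\Bigl(I-\frac{D-I}{tt'}\Bigr),
\]
so that with $t=\sqrt d$, $t'=\sqrt d+\varepsilon$ one has $H(\sqrt d)=\tfrac{t}{t'}H(\sqrt d+\varepsilon)+t(t-t')\bigl(I-\tfrac{D-I}{tt'}\bigr)$, hence
\[
  \bigl\|H(\sqrt d)-H(\sqrt d+\varepsilon)\bigr\|\le \varepsilon\|A\|+\bigl(2\sqrt d\,\varepsilon+\varepsilon^2\bigr)\le (3\sqrt d+\|A\|)\varepsilon
\]
for $\varepsilon\in(0,1)$, $d>1$ (I would double-check the constant by expanding $H(\sqrt d)-H(\sqrt d+\varepsilon)=(d-(\sqrt d+\varepsilon)^2)I+\varepsilon A=-(2\sqrt d\varepsilon+\varepsilon^2)I+\varepsilon A$, which directly gives the bound with room to spare). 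By Weyl's inequality, any eigenvalue of $H(\sqrt d)$ below $-(3\sqrt d+\|A\|)\varepsilon$ forces the corresponding eigenvalue of $H(\sqrt d+\varepsilon)$ to be strictly negative, so the count in (iv) is at most $N(\sqrt d+\varepsilon)$, which by item~(i) is at most the number of real eigenvalues of $\tilde B$ (equivalently $B$, since outliers above $\sqrt d+\varepsilon>1$ are non-trivial) exceeding $\sqrt d+\varepsilon$. The $t=-\sqrt d$ case is symmetric via item~(ii), comparing $H(-\sqrt d)$ and $H(-\sqrt d-\varepsilon)$.

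The main obstacle I anticipate is the careful bookkeeping in the continuity argument: justifying that $N(t)$ changes by \emph{at most} the root multiplicity at each crossing (an eigenvalue could in principle touch $0$ without crossing, or several could collide), and pinning down the initial condition $N(t)=0$ at $t\to\pm\infty$ together with the exact trivial-eigenvalue correspondence between $B$, $\tilde B$, and the roots of $\det H$. A clean way to handle the ``touch without crossing'' subtlety is to phrase everything via Sylvester's law of inertia / the signature of $H(t)$ and show the negative inertia is nonincreasing as $|t|$ grows on each ray, with jumps controlled by $\dim\ker H(t_0)=$ multiplicity of $t_0$ as an eigenvalue of $\tilde B$; this is exactly where the polynomial identity $\det(zI-\tilde B)=\det H(z)$ does the work. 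The rest is elementary: Weyl's inequality and a one-line norm bound for part~(iv).
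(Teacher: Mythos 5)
Your treatment of items (i), (ii) and (iv) follows essentially the paper's route: a continuity argument in $t$ using $\det(\tilde B - tI)=\det H(t)$, positivity of $H(t)$ for $|t|$ large, and for (iv) the expansion $H(\sqrt d)-H(\sqrt d+\varepsilon)=-(2\sqrt d\,\varepsilon+\varepsilon^2)I+\varepsilon A$ plus Weyl's inequality, which is exactly the paper's computation. One point you flag but do not actually close is the multiplicity bookkeeping: what is needed is that $\dim\ker H(t_0)$ is at most the multiplicity of $t_0$ as a root of $\det H(\cdot)$, i.e.\ as an eigenvalue of $\tilde B$ (your phrase ``$\dim\ker H(t_0)=$ multiplicity'' overstates this; only the inequality holds in general). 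The paper proves this inequality via a derivative-of-the-determinant expansion (citing Lemma 3.3 of Mohanty et al.): the first $k-1$ derivatives of $\det H(t)$ are sums of terms each containing an $(n-k+1)\times(n-k+1)$ minor of $H(t_0)$, all of which vanish when the kernel has dimension $k$. Your Sylvester/inertia phrasing is fine, but this step must be argued, not asserted.

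The genuine gap is item (iii). You claim $H(0)=D-I\succeq 0$ ``has no negative eigenvalues''; this is false precisely in the only nontrivial case, since each isolated vertex contributes a diagonal entry $-1$, so the number of negative eigenvalues of $H(0)$ equals the number of degree-zero vertices. Moreover, your proposed limit $t\to 0^+$ cannot reach the statement: crossings on $(0,\infty)$ correspond to \emph{positive} real eigenvalues of $\tilde B$, and the root $z=0$ of $\det H(z)=\det(D-I)$ corresponds to degree-one vertices, whereas item (iii) is about the multiplicity of the eigenvalue $-1$ of $\tilde B$ — an eigenvalue your continuity argument on the positive ray never sees, and which is also entangled with the trivial factor $(z^2-1)^{|E|-n}$ of $B$. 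The paper's proof of (iii) is direct and different: for each isolated vertex $s$, the vector $\begin{bmatrix}\mathbf{1}_s\\ \mathbf{1}_s\end{bmatrix}\in\dR^{2n}$ satisfies $(D-I)\mathbf{1}_s=-\mathbf{1}_s$ and $-\mathbf{1}_s+A\mathbf{1}_s=-\mathbf{1}_s$, so it is an eigenvector of $\tilde B$ with eigenvalue $-1$; these vectors are linearly independent, giving multiplicity at least the number of degree-zero vertices, which is exactly the number of negative eigenvalues of $H(0)$. You should replace your limiting argument for (iii) with such an explicit construction (or another argument that genuinely targets the eigenvalue $-1$).
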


\paragraph{Proof of (i)} For $t\not=0$, we can write
\[ H(t)=t^2\left(I-\frac{1}{t} A+\frac{1}{t^2} (D-I)\right).\]
Let $k_-$ be the number of negative eigenvalues of $H(t_*)$ for $t_*>0$.
Since eigenvalues of $H(t)$ are continuous in $t$,  for sufficiently large $t>t_*$, $H(t)$ is positive definite. Therefore we have
\begin{align}\label{eq:continuity}
  k_-\leq  \sum_{t>t_*, \det H(t)=0} \dim \ker H(t).
\end{align}
This is because all negative eigenvalues of $H(t_*)$ will cross $0$ at least once as  $t$ increases.   From \eqref{eq:Ihara}, when $H(t)$ is singular, $t$ is an eigenvalue of $\tilde B$. Let $k=\dim \ker H(t)$. We have from \cite[Lemma 3.3]{mohanty2024robust} that
\begin{align}
  \left( \frac{d}{dt}\right)^k[\det (\tilde B-t I)]=\left( \frac{d}{dt}\right)^k [\det H(t)]=\sum_{S,T\subset [n], |S|=|T|\geq n-k} \det(H(t)_{S,T}) q_{S,T}(t),
\end{align}
where $H(t)_{S,T}$ is a submatrix of $H(t)$ on indices $S,T$, and $q_{S,T}(t)$ are polynomials in $t$. Since $\dim \ker H(t)=k$, each $(n-k)\times (n-k)$ submatrix of $H(t)$ is singular,  we obtain $t$ is an eigenvalue of $\tilde B$ with multiplicity at least $k$. This finishes the proof for Case (i).
~
\paragraph{Proof of (ii)} For $t_*<0$, $H(t)$ is positive definite for sufficiently small $t<t_*$. By the same continuity argument,
\begin{align}
  k_-\leq  \sum_{t<t_*, \det H(t)=0} \dim \ker H(t).
\end{align}
Repeating the proof implies Case (ii).

\paragraph{Proof of (iii)} When $t=0$, $H(0)=D-I$. Then, the number of negative eigenvalues of $H(0)$ is the number of vertices with degree $0$. Let $s_1,\dots,s_k\in [n]$ be the indices of these vertices of degree $0$. We can construct $k$ linearly independent eigenvectors of $\tilde B$  associated  with  eigenvalue $-1$, of the form  $
\begin{bmatrix}
  y_k\\
  y_k
\end{bmatrix}\in \dR^{2n}$, where  $y_{k}=\mathbf{1}_{s_k}$ are supported only on the vertex $s_k$. One can check that they satisfy  \eqref{eq:BHeq} and \eqref{eq:Bheq1}. This concludes the proof.

\paragraph{Proof of (iv)} For $i\in [n]$, by Weyl's inequality,
\begin{align}
  |\lambda_i(H(\sqrt d)- \lambda_i(H(\sqrt d+\varepsilon)) |&\leq  \| H(\sqrt d)-H(\sqrt{d}+\varepsilon)\|\\
    &\leq 2\sqrt{d}\varepsilon+\varepsilon^2+\varepsilon \|A\|\leq (3\sqrt{d}+\|A\|)\varepsilon.
  \end{align}
  Therefore
  \begin{align}
    |\{i: \lambda_i( H(\sqrt d))< -(3\sqrt{d}+\|A\|)\varepsilon \} |\leq |\{ i: \lambda_i(H(\sqrt{d}+\varepsilon))<0\}|,
  \end{align}
  where the right-hand side is bounded by the number of real eigenvalues of $B$ above $\sqrt{d}+\varepsilon$ from part (i). Applying the same argument to $H(-\sqrt{d}-\varepsilon)$ together with part (ii) implies the second claim.

  \subsection{Upper bound on the small  eigenvalue locations for $H(t)$}

  The upper bound is based on the Courant-Fisher variational characterization of the eigenvalues: for any real symmetric matrix $M$,
  \begin{align} \label{eq:minimax}
    \lambda_{n-k+1}(M) = \inf_{\substack{E \subseteq \dR^n \\ \dim(E) = k}}\ \sup_{\substack{y \in E \\ \norm{y} = 1}} y^\top M y .
  \end{align}
  As a result, if we can exhibit a subspace of $H(t)$ of dimension $k$ such that $y^\top H(t) y \leq 0$ for all $y \in E$, then $H(t)$ has at least $k$ negative eigenvalues.

  For $k \geq 0$, we define $\lambda_k^+$ (resp. $\lambda_k^-$) as the eigenvalue of $\tilde B$ with the $k$-th largest (resp. $k$-th lowest) real part. Its associated eigenvector is denoted $\dbinom{y^+_k}{x^+_k}$ and $\dbinom{y^-_k}{x^-_k}$, respectively. From Proposition \ref{prop:eigen_weak_convergence}, we have with high probability,
  \[\lambda_k^+ = \mu_k^+ + O(\epsilon_n) \quad \text{for } k\in [r_+], \]
  and the same holds for the $\lambda_k^-$.

  \begin{lemma}\label{lem:eigen_BH}
    Assume that $t \geq \sqrt{d}$ and $d \geq 2$. With high probability, for $1 \leq k \leq r_+$,
    \begin{align}
      \lambda_{n-k+1}(H(t)) \leq (t - \mu_k^+)\left( t - \frac{d+1-\tau_k^+}{\mu_k^+} \right) + O(t^2\epsilon_n),
    \end{align}
    while for $1 \leq k \leq r_-$,
    \begin{align}
      \lambda_{n-k+1}(H(-t)) \leq (-t - \mu_k^-)\left(- t - \frac{d+1-\tau_k^-}{\mu_k^-} \right) + O(t^2\epsilon_n),
    \end{align}

    % \begin{enumerate}
    %     \item For $i\in [r_0]$, choose
    %     \begin{align}\label{eq:multiplicty}
    % \frac{d+1-\tau_i}{\mu_i}< t<\mu_{i}.
    %  \end{align} Then the number of negative eigenvalues in $H(t)$ with value less than $$\left(t-\frac{d+1-\tau_i}{\mu_i}\right)(t-\mu_i)$$ is at least the multiplicity of $\mu_i$ in the spectrum of $\E A$.
    %     \item Assume   \begin{align}
    %  \sup_{i\in [r_0]}\frac{d+1-\tau_i}{\mu_i}\leq t<\mu_{r_0}.
    %  \end{align} Then the number of negative eigenvalues in $H(t)$ is at least $r_0$.
    % And  for all $i\in [r_0]$,
    %  \[\lambda_i(H(t))\leq \left(t-\frac{d+1-\tau_i}{\mu_i}\right)(t-\mu_i)+O(\epsilon_n).\]
    % \end{enumerate}
  \end{lemma}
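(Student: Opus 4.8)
The plan is to apply the Courant-Fisher variational characterization \eqref{eq:minimax} using the $k$-dimensional subspace $E = \operatorname{span}(y_1^+, \dots, y_k^+)$ spanned by the zero eigenvectors of the Bethe-Hessian at the values $\lambda_1^+, \dots, \lambda_k^+$ (these are well-defined and the $y_j^+$ are real by Theorem~\ref{thm:real_eigenvalues}; the subspace is genuinely $k$-dimensional by Lemma~\ref{lem:linearindependence}). The key point is that on this subspace we can compute $y^\top H(t) y$ essentially exactly: by Lemma~\ref{lem:Hy}, $H(t)y_j^+ = (t y_j^+ - x_j^+)(t - \lambda_j^+)$, so for $i,j \in [k]$,
\[
  \langle y_i^+, H(t) y_j^+\rangle = (t - \lambda_j^+)\bigl(t\langle y_i^+, y_j^+\rangle - \langle y_i^+, x_j^+\rangle\bigr).
\]
Now I would plug in the inner-product asymptotics from Corollary~\ref{cor:inner_product}: $\langle y_i^+, y_j^+\rangle = \delta_{ij} + O(\epsilon_n)$ and $\langle x_j^+, y_i^+\rangle = \frac{d+1-\tau_j^+}{\mu_j^+}\delta_{ij} + O(\epsilon_n)$, together with $\lambda_j^+ = \mu_j^+ + O(\epsilon_n)$ from Proposition~\ref{prop:eigen_weak_convergence}. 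This gives, for the diagonal terms,
\[
  \langle y_k^+, H(t) y_k^+\rangle = (t - \mu_k^+)\left(t - \frac{d+1-\tau_k^+}{\mu_k^+}\right) + O(t^2\epsilon_n),
\]
and for the off-diagonal terms $\langle y_i^+, H(t) y_j^+\rangle = O(t^2 \epsilon_n)$, since each of the factors is $O(t)$ (here I use that $t \geq \sqrt d$, and that $\mu_k^+, \tau_k^+$ and the degrees are all $n^{o(1)}$ with high probability, so the "error" really is of the stated order relative to $t^2$).

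The next step is to convert this entrywise control into a bound on $\sup_{y \in E, \|y\|=1} y^\top H(t) y$. Writing $y = \sum_{j=1}^k c_j y_j^+$, the Gram matrix $K = (\langle y_i^+, y_j^+\rangle)_{i,j\le k} = I + O(r_+ \epsilon_n)$ is close to the identity (as in Lemma~\ref{lem:linearindependence}), so $\|y\|^2 = c^\top K c$ and $\|c\|^2 \le (1 + O(r_+\epsilon_n))$ whenever $\|y\| = 1$. Meanwhile $y^\top H(t) y = c^\top M c$ where $M_{ij} = \langle y_i^+, H(t) y_j^+\rangle$; by the computation above, $M = \operatorname{diag}\bigl((t-\mu_j^+)(t - \tfrac{d+1-\tau_j^+}{\mu_j^+})\bigr)_{j\le k} + O(t^2 r_+ \epsilon_n)$ in operator norm. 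Since $t \geq \sqrt d$ and each $\mu_j^+ > \sqrt d$, I need to check that the diagonal entries are nonincreasing in $j$ or at least that the largest one among $j \in [k]$ is $(t-\mu_k^+)(t - \tfrac{d+1-\tau_k^+}{\mu_k^+})$. This is exactly where the hypothesis $d \geq 2$ enters: as noted in the proof sketch of Theorem~\ref{thm:count}, for $d \geq 2$ the "other root" $\frac{d+1-\tau_j^+}{\mu_j^+} = \frac{d+1}{\mu_j^+} - \frac{d}{(\mu_j^+)^3}$ is below $\sqrt d$ in magnitude (this is the inequality $|\mu| - (\frac{d+1}{|\mu|} - \frac{d}{|\mu|^3}) = \frac{(\mu^2-d)(\mu^2+d-2)}{|\mu|^3} > 0$ used in the proof of Theorem~\ref{thm:real_eigenvalues}, which requires $d \geq 2$ to also get $\frac{d+1-\tau}{\mu} \le \sqrt d$), so both factors $t - \mu_j^+$ and $t - \frac{d+1-\tau_j^+}{\mu_j^+}$ are ordered consistently as $j$ increases (the first is increasing toward $t - \sqrt d \ge 0$... wait, actually decreasing since $\mu_j^+$ decreases — let me be careful: $\mu_1^+ \ge \dots \ge \mu_k^+$, so $t - \mu_j^+$ is increasing in $j$ and $\ge 0$, while $t - \frac{d+1-\tau_j^+}{\mu_j^+} \ge t - \sqrt d \ge 0$; the product is therefore maximized at $j = k$). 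Hence $\lambda_{n-k+1}(H(t)) \le \sup_{\|y\|=1, y \in E} y^\top H(t) y \le \max_{j \le k} (t - \mu_j^+)(t - \tfrac{d+1-\tau_j^+}{\mu_j^+}) + O(t^2 r_+ \epsilon_n) = (t - \mu_k^+)(t - \tfrac{d+1-\tau_k^+}{\mu_k^+}) + O(t^2\epsilon_n)$, absorbing $r_+ = O(1)$ in the fixed-degree regime into the constant.

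The negative-$t$ case is entirely symmetric: take $E = \operatorname{span}(y_1^-, \dots, y_k^-)$, use $H(-t)y_j^- = (-t y_j^- - x_j^-)(-t - \lambda_j^-)$ with $\lambda_j^- = \mu_j^- + O(\epsilon_n) < -\sqrt d$, and the same inner-product formulas from Corollary~\ref{cor:inner_product} applied to the negative eigenvectors. The ordering of the $\mu_j^-$ (most negative first) again makes the relevant product maximal at $j = k$ given $d \geq 2$. I expect the main obstacle to be purely bookkeeping: carefully tracking that the off-diagonal and perturbation errors are genuinely $O(t^2\epsilon_n)$ rather than something worse (this needs the high-probability bounds $\max_v \deg(v) = n^{o(1)}$, $\|A\| = n^{o(1)}$, $\mu_1 = d = n^{o(1)}$ and $(1-\tau_i)^{-1} = n^{o(1)}$, all already invoked in Corollary~\ref{cor:inner_product}), and confirming the monotonicity of the diagonal entries so that the supremum over the subspace is attained (up to error) at the $k$-th coordinate — the latter being the only place the hypothesis $d \ge 2$ is really used.
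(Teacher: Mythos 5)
Your skeleton is exactly the paper's proof: Courant--Fisher \eqref{eq:minimax} applied to $E_k^+=\operatorname{span}(y_1^+,\dots,y_k^+)$, the identity $y_j^\top H(t)y_i=(t\langle y_i,y_j\rangle-\langle x_i,y_j\rangle)(t-\lambda_i)$ from Lemma~\ref{lem:Hy}, the inner products from Corollary~\ref{cor:inner_product}, and a Gram-matrix argument to pass from the entrywise estimates to the quadratic form. Up to that point the proposal is fine.

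The genuine gap is in the step where you argue that the diagonal value is maximal at $j=k$. You assert that $t-\mu_j^+\geq 0$; this is false precisely in the regime that matters, since $\mu_j^+>\sqrt{d}$ for every $j\leq r_+$, so at $t=\sqrt{d}$ (the value at which the lemma is invoked in the proof of Theorem~\ref{thm:count}) every factor $t-\mu_j^+$ is strictly negative --- indeed the whole point is that $(\sqrt{d}-\mu_k^+)\bigl(\sqrt{d}-\tfrac{d+1-\tau_k^+}{\mu_k^+}\bigr)<0$ by \eqref{eq:numerical1}. Moreover, even in the range $t\geq\mu_1^+$ where both factors are nonnegative, your ``ordered consistently'' reasoning does not close the argument: as $j$ increases, $t-\mu_j^+$ increases while $t-\tfrac{d+1-\tau_j^+}{\mu_j^+}$ \emph{decreases} (because $\mu\mapsto\tfrac{d+1}{\mu}-\tfrac{d}{\mu^3}$ is decreasing for $\mu^2>d$ and $d\geq 2$), so the two factors move in opposite directions and nonnegativity alone does not imply the product is maximized at $j=k$. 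The paper resolves this by verifying directly that the full function $\nu(\mu)=\bigl(t-\tfrac{d+1-d/\mu^2}{\mu}\bigr)(t-\mu)$ is decreasing in $\mu$ on $\mu>\sqrt{d}$ for all $t\geq\sqrt{d}$ and $d\geq 2$ (a one-variable calculus check), which is exactly the statement needed to conclude $y^\top H(t)y\leq\nu(\mu_k^+)\norm{y}^2+O(t^2\epsilon_n)$; your proposal neither proves this monotonicity nor gives a correct substitute for it, so the crucial use of $d\geq 2$ remains unjustified. The rest of the proposal (and the symmetric treatment of $H(-t)$, which inherits the same flaw) matches the paper.
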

  \begin{proof}
    We prove the first bound; the second one is shown the exact same way. Using Lemma \ref{lem:Hy}, with high probability, for any $i, j \in [r_0]$
    \[ y_j^\top H(t) y_i = (t\langle y_i, y_j \rangle - \langle x_i, y_j \rangle)(t - \lambda_i).\]
    Using Corollary \ref{cor:inner_product}, we can compute the two inner products involved in the above expression, which yields
    \begin{equation} \label{eq:yHy}
      y_j^\top H(t) y_i = \left(t  - \frac{d+1-\tau_i}{\mu_i} \right)(t - \mu_i)\delta_{ij} + O(t^2 \epsilon_n).
    \end{equation}
    Let $E_k^+ = \operatorname{span}(y_1^+, \dots, y_k^+)$. For $y \in E_k^+$, write $y = c_1 y_1^+ + \dots + c_k y_k^+$; from \eqref{eq:scal_yy} and \eqref{eq:yHy}, we have
    \[ \norm{y} = c_1^2 + \dots + c_k^2 + O(\epsilon_n) \quand y^\top H y = \sum_{i=1}^k c_i^2 \left(t  - \frac{d+1-\tau_i^+}{\mu_i^+} \right)(t - \mu_i^+) + O(t^2 \epsilon_n). \]
    It is cumbersome but straightforward to check (e.g. with a computer algebra system) that for $d \geq 2$ and $t \geq \sqrt{d}$, the function
    \[ \nu: \mu \mapsto \left(t  - \frac{d+1-\frac{d}{\mu^2}}{\mu} \right)(t - \mu)\]
    is decreasing in $\mu$, hence
    \[ y^\top H y \leq  \left(t  - \frac{d+1-\tau_k^+}{\mu_k^+} \right)(t - \mu_k^+) \sum_{i=1}^k c_i^2 + O(t^2 \epsilon_n) = \left(t  - \frac{d+1-\tau_k^+}{\mu_k^+} \right)(t - \mu_k^+)\norm{y}^2 + O(t^2 \epsilon_n). \]
    The result then ensues from an application of the Courant-Fisher principle \eqref{eq:minimax}.
  \end{proof}

  % We can identify a subspace $U$ spanned by eigenvectors associated with $\lambda_i$, on which the quadratic form is negative, which implies $H(t)$ has at least $\dim(U)$ many negative eigenvalues.

  % For Claim (ii), as long as
  % \[\sup_{i\in [r_0]}\frac{d+1-\tau_i}{\mu_i}< t<\mu_{r_0},\] we have $y_i^* H(t)y_i<0$ for sufficiently large $n$.

  % Therefore, the number of negative eigenvalues of $H(t)$  is at least $r_0$
  % due to the Courant minimax principle, and the $i$-th smallest eigenvalue of $H(t)$ satisfies
  %  \begin{align}
  %      \lambda_i(H(t))\leq \left(t-\frac{d+1-\tau_i}{\mu_i}\right)(t-\mu_i)+O(\epsilon_n).
  %  \end{align}

  % In particular, in the 2-block SBM case, $\mu_2=\frac{a+b}{2}$, and the `eigenvalue insider choice'  $r=\frac{a+b}{a-b}$  satisfies  \eqref{eq:wide_range_of_r}. \yz{this is so cool!}

  \subsection{Proof of Theorem~\ref{thm:count}}\label{sec:proof_count}
  We shall only show the result for the positive eigenvalues of $Q$; as before, the negative case follows the same method. We fix an $\varepsilon > 0$.

  From \cite[Theorem 1]{stephan2022sparse} and Theorem~\ref{thm:real_eigenvalues}, the matrix $\tilde B$ has $r_+$ real eigenvalues above $\sqrt{d} + \epsilon_n$, where $\epsilon_n$ is defined in \eqref{eq:def_epsn}. For large enough $n$, $\epsilon_n \leq \varepsilon$, so item (i) in Lemma \ref{lem:lower_bound} implies that
  \[N(\sqrt{d} + \varepsilon) \leq r_+.\]
  Further, by the Perron-Frobenius theorem, $\norm{A} \leq \max_x \deg(x)$, so a Chernoff bound implies that $\norm{A} \leq \log(n)$ with high probability. For large enough $n$, this implies that 
  \begin{align}(3\sqrt{d} + \norm{A})\epsilon_n \leq \varepsilon, \label{eq:eps_lowerbound}
  \end{align}
  hence Lemma \ref{lem:lower_bound} (iv) implies that
  \[ N_\eps(\sqrt{d}) \leq r_+. \]

  On the other hand, from Lemma~\ref{lem:eigen_BH}, for any $t\in \dR$,
  \begin{align}
    \lambda_{n-r_{+}+1}(H(t))\leq \left(t-\frac{d+1-\tau_{r_+}^+}{\mu_{r_+}^+}\right)(t-\mu_{r_+}^+) +O(t^2\epsilon_n).
  \end{align}

  Then when \[   t>\frac{d+1-\tau_{r_+}^+}{\mu_{r_+}^+}, \quad \text{and} \quad 0<t<\mu_{r_+}^+,\] $H(t)$ has at least $r_{+}$ many negative eigenvalues for sufficiently large $n$.
  An elementary calculation shows that when $d\geq 2$,
  \begin{align}\label{eq:numerical1}
    \frac{d+1-\frac{d}{\mu^2}}{\mu}=(d+1)\mu^{-1} -d\mu^{-3}<\sqrt{d} \quad \text{for} \quad \mu > \sqrt{d}.
  \end{align}
  Therefore, if we choose $\varepsilon$ such that
  \[ \eps < -\left(\sqrt{d}-\frac{d+1-\tau_{r_+}^+}{\mu_{r_+}^+}\right)(\sqrt{d}-\mu_{r_+}^+) \quand \eps < \mu_{r_+} - \sqrt{d}, \]
  then we have both $\lambda_{n-r_{+}+1}(H(\sqrt{d})) < -\varepsilon$ and $\lambda_{n-r_{+}+1}(H(\sqrt{d} + \varepsilon)) < 0$. This means that
  \[ N_\eps(\sqrt{d}) \geq r_+ \quand N(\sqrt{d} + \eps) \geq r_+, \]
  which concludes the proof of Theorem \ref{thm:count} when $\varepsilon$ is independent of $n$. 
  
  The arguments above are valid for $\varepsilon_n$ depending on $n$, as long as in \eqref{eq:eps_lowerbound}
  \[ (\log n)  \epsilon_n \ll  \varepsilon_n\] 
  holds instead, where $\eps_n$ is defined in \eqref{eq:def_epsn}. 
  In particular, it holds for $\varepsilon_n=\log^{-c}(n)$ for any $c>0$.

  \section{Eigenvectors in the higher-degree regime}\label{sec:high}

  We now move to the high-degree regime when $d$ is sufficiently large, showing that the $y_i$ are pseudo-eigenvectors for $H(\pm\sqrt{d})$:

  \begin{lemma}\label{lem:y_pseudo_eigen}
    With high probability, for any $i \in [r_+]$ and large enough $d$,
    \begin{equation}
      \norm*{H(\sqrt{d})y_i^+ - \left(\sqrt{d} - \mu_i^+\right)\left(\sqrt{d} - \frac{d}{\mu_i^+}\right)y_i^+} \leq \sqrt{d}+1.
    \end{equation}
    The same holds for $H(-\sqrt{d})$ and the $y_i^-$, respectively.
  \end{lemma}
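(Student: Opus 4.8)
The plan is to read off the exact identity $H(\sqrt d)\,y_i^+ = (\sqrt d\, y_i^+ - x_i^+)(\sqrt d - \lambda_i^+)$ from Lemma~\ref{lem:Hy}, and then control the scalar $\sqrt d - \lambda_i^+$ and the vector $\sqrt d\, y_i^+ - x_i^+$ separately. For the scalar, Proposition~\ref{prop:eigen_weak_convergence} gives $\lambda_i^+ = \mu_i^+ + O(\epsilon_n)$. For the vector, I would use the orthogonal decomposition $x_i^+ = \langle x_i^+, y_i^+ \rangle\, y_i^+ + w_i^+$ with $w_i^+ \perp y_i^+$ (recall $y_i^+$ is $\ell^2$-normalized), and invoke Corollary~\ref{cor:inner_product}, which yields
\[ \langle x_i^+, y_i^+ \rangle = \frac{d}{\mu_i^+} + \frac{1 - \tau_i^+}{\mu_i^+} + O(\epsilon_n) \quand \norm{w_i^+}^2 = \frac{d + \tau_i^+(1 - \tau_i^+)}{(\mu_i^+)^2} + O(\epsilon_n). \]
Substituting these into the identity, the leading term is exactly $(\sqrt d - \mu_i^+)\bigl(\sqrt d - \tfrac{d}{\mu_i^+}\bigr)\, y_i^+$, and everything else is
\[ -(\sqrt d - \mu_i^+)\left( \frac{1 - \tau_i^+}{\mu_i^+}\, y_i^+ + w_i^+ \right) + \vec r , \]
where $\vec r$ gathers all products carrying an $O(\epsilon_n)$ factor.

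Next I would bound the two pieces. Since $\mu_i^+ \le \mu_1 = d$ and $d \lesssim \operatorname{polylog}(n)$, while $\epsilon_n$ from \eqref{eq:def_epsn} decays faster than any inverse power of $\log n$, one has $\norm{\vec r} = o(1)$. For the main remainder, I would use $\sqrt d < \mu_i^+ \le d$ (so $|\sqrt d - \mu_i^+| = \mu_i^+ - \sqrt d$ and $1 - \tfrac{\sqrt d}{\mu_i^+} \le 1 - \tfrac1{\sqrt d}$), together with $\tau_i^+ (1 - \tau_i^+) \le \tfrac14$, $1 - \tau_i^+ \le 1$, and $\sqrt{d + \tfrac14} \le \sqrt d + 1$, to get
\[ \norm*{ H(\sqrt d)\,y_i^+ - (\sqrt d - \mu_i^+)\Bigl(\sqrt d - \tfrac{d}{\mu_i^+}\Bigr) y_i^+ } \le \Bigl(1 - \tfrac{\sqrt d}{\mu_i^+}\Bigr)\bigl(1 + \sqrt{d + \tfrac14}\bigr) + o(1) \le \Bigl(1 - \tfrac1{\sqrt d}\Bigr)\bigl(\sqrt d + 2\bigr) + o(1) = \sqrt d + 1 - \tfrac{2}{\sqrt d} + o(1), \]
which is at most $\sqrt d + 1$ for all $n$ (hence $d$) large enough. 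The case of $H(-\sqrt d)$ and the vectors $y_i^-$ is entirely symmetric: one applies Lemma~\ref{lem:Hy} at $t = -\sqrt d$, uses $\lambda_i^- = \mu_i^- + O(\epsilon_n)$ and the analogous inner products from Corollary~\ref{cor:inner_product}, and the bound $|\mu_i^-| \le d$ (available because $\mu_i^- < -\sqrt d$) plays the role of $\mu_i^+ \le d$; the numerical estimate is the same.

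I expect the only genuinely delicate point to be the last algebraic step. The naive bound $1 - \tfrac{\sqrt d}{\mu_i^+} < 1$ alone would only give $\sqrt d + 2$ on the right-hand side, which is too weak; the sharper constant $\sqrt d + 1$ relies essentially on the fact that $\mu_1 = d$ is the largest eigenvalue of $Q$, which forces $\mu_i^+ \le d$ and supplies the extra factor $1 - \tfrac1{\sqrt d}$. Everything else is routine bookkeeping of the $O(\epsilon_n)$ error terms, harmless throughout the regime \eqref{eq:regime_d}.
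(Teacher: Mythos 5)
Your proposal is correct and follows essentially the same route as the paper: the exact identity $H(t)y_i=(ty_i-x_i)(t-\lambda_i)$ from Lemma~\ref{lem:Hy}, the split of $x_i^{+}$ into its component along $y_i^{+}$ and the orthogonal remainder via Corollary~\ref{cor:inner_product} and \eqref{eq:projxy}, followed by a numerical estimate. The only (harmless) difference is the last algebraic step: the paper bounds the orthogonal contribution by $\sqrt d$ using that $(1-\sqrt{\tau})\sqrt{1+\tau(1-\tau)}<1$ on $(0,1)$ and the coefficient shift by $1$, while you instead invoke $\mu_i^{+}\le d$ to gain the factor $1-1/\sqrt d$; both yield the stated bound $\sqrt d+1$.
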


  \begin{proof}
    Recall that $H(t) y_i = (ty_i - x_i)(t - \lambda_i)$. As a result, we have
    \[ \norm{H(t)y_i - (t - \langle x_i, y_i \rangle)(t - \lambda_i)y_i} = |t - \lambda_i|\norm{x_i - \langle x_i, y_i \rangle y_i} = |t - \lambda_i|\sqrt{\frac{d + \tau_i(1 - \tau_i)}{\mu_i^2}} + O(|t-\lambda_i|\epsilon_n), \]
    due to \eqref{eq:projxy}. Using Corollary \ref{cor:inner_product} again, for $i \in [r_+]$,
    \begin{align}\label{eq:approx_Hsqrtd}
      \norm*{H(\sqrt{d})y_i^+ - \left(\sqrt{d} - \mu_i^+\right)\left(\sqrt{d} - \frac{d+1-\tau_i^+}{\mu_i^+}\right)y_i^+} \leq (\mu_i^+ - \sqrt{d})\sqrt{\frac{d + \tau_i^+(1 - \tau_i^+)}{\mu_i^{+2}}} + O(d\epsilon_n).
    \end{align}
    Now, we have
    \begin{align}\label{eq:bound_1}
      \left|\left(\sqrt{d} - \mu_i^+\right)\left(\sqrt{d} - \frac{d+1-\tau_i^+}{\mu_i^+}\right) - \left(\sqrt{d} - \mu_i^+\right)\left(\sqrt{d} - \frac{d}{\mu_i^+}\right) \right| \leq \frac{|\mu_i^+ - \sqrt{d}|}{\mu_i^+} \leq 1,
    \end{align}
    and we can write the error term in \eqref{eq:approx_Hsqrtd}  as
    \[ (\mu_i^+ - \sqrt{d})\sqrt{\frac{d + \tau_i^+(1 - \tau_i^+)}{\mu_i^{+2}}} = \sqrt{d}\left(1 - \sqrt{\tau_i^+}\right)\sqrt{1+\frac{\tau_i^+\left(1- \tau_i^+\right)}{d}}. \]
    One can easily check that the function $\tau \mapsto (1 - \sqrt{\tau})\sqrt{1+\tau(1- \tau)}$ is strictly lower than $1$ on $(0, 1)$. As a result, for large enough $d$ and $n$,
    \[ (\mu_i^+ - \sqrt{d})\left(\sqrt{\frac{d + \tau_i^+(1 - \tau_i^+)}{\mu_i^{+2}}}\right) + O(d^2\epsilon_n) \leq \sqrt{d},\]
    which together with \eqref{eq:bound_1} yields the final bound.
  \end{proof}

  To apply matrix perturbation results, we need an orthonormal family of eigenvectors obtained from $y_i, i\in [r_0]$. This is taken care of by the following lemma:
  \begin{lemma}
    With high probability, there exists an orthogonal matrix $\tilde Y \in \dR^{n \times r_0}$ such that
    \[ \norm{Y - \tilde Y} = O(r_0\epsilon_n), \]
    where $Y\in \dR^{n\times r_0}$ is the matrix whose rows are the $y_i$.  In particular, the statement of Lemma \ref{lem:y_pseudo_eigen} holds when replacing the $y_i$ with the $\tilde y_i$, where $\tilde{y}_i$ is the $i$-th row vector of $\tilde Y$.
  \end{lemma}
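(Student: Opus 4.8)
The plan is to take $\tilde Y$ to be the orthonormal factor in the polar decomposition of $Y$, and to carry out all estimates through the $r_0 \times r_0$ Gram matrix $K := Y^\top Y$, whose entries are $K_{ij} = \langle y_i, y_j\rangle$. By Corollary~\ref{cor:inner_product}, every entry of $K - I$ is $O(\epsilon_n)$, so $\norm{K - I} \leq \norm{K - I}_F = O(r_0 \epsilon_n)$; combined with Lemma~\ref{lem:linearindependence} this shows that $K$ is positive definite with high probability, and hence $K^{1/2}$ and $K^{-1/2}$ are well-defined symmetric matrices that commute with $K$.

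Next I would set $\tilde Y := Y K^{-1/2}$, so that $\tilde Y^\top \tilde Y = K^{-1/2} K K^{-1/2} = I_{r_0}$; thus $\tilde Y$ has orthonormal columns (which is what ``orthogonal matrix in $\dR^{n\times r_0}$'' means here) and $\norm{\tilde Y} = 1$ in operator norm. Writing $Y = \tilde Y K^{1/2}$, I would then bound
\[ \norm{Y - \tilde Y} = \norm{\tilde Y\,(K^{1/2} - I)} \leq \norm{K^{1/2} - I} \leq \norm{K - I} = O(r_0 \epsilon_n), \]
where the penultimate inequality follows by diagonalizing the positive semidefinite matrix $K$ and using $|\sqrt{\lambda} - 1| \leq |\lambda - 1|$ for all $\lambda \geq 0$. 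This gives the first claim.

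For the final assertion, I would observe that $y_i - \tilde y_i = (Y - \tilde Y)e_i$ with $e_i$ the $i$-th standard basis vector of $\dR^{r_0}$, so $\norm{y_i - \tilde y_i} \leq \norm{Y - \tilde Y} = O(r_0 \epsilon_n)$. In the regime \eqref{eq:regime_d}, both $\norm{H(\pm\sqrt d)}$ and the scalar $(\sqrt d - \mu_i^\pm)(\sqrt d - d/\mu_i^\pm)$ are at most $\operatorname{polylog}(n)$, while $r_0 \epsilon_n$ decays faster than any negative power of $\log n$ by \eqref{eq:def_epsn}. A triangle inequality then transfers the pseudo-eigenvector estimate of Lemma~\ref{lem:y_pseudo_eigen} from $y_i^\pm$ to $\tilde y_i^\pm$, at the cost of replacing the right-hand side $\sqrt d + 1$ by, say, $\sqrt d + 2$, which is harmless for all subsequent uses. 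The only point requiring a bit of care is precisely this error-propagation step, and it is settled by the super-polylogarithmic decay of $\epsilon_n$; everything else is the standard polar-decomposition computation.
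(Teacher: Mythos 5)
Your proposal is correct and follows essentially the same route as the paper: the symmetric whitening $\tilde Y = YK^{-1/2}$ with $K = Y^\top Y$, controlled via the entrywise bounds on $\langle y_i, y_j\rangle$ exactly as in Lemma~\ref{lem:linearindependence}. Your inequality chain through $\norm{K^{1/2}-I}\leq\norm{K-I}$ and the explicit error-propagation for the ``in particular'' clause are slightly more detailed than the paper's write-up, but the argument is the same.
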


  \begin{proof}
    Let $K=Y^\top Y\in\dR^{r_0 \times r_0}$ be the Gram matrix of the $y_i$; we have shown that with high probability $K = I + K_{\mathrm{off}}$, with $\norm{K_{\mathrm{off}}} = O(r_0\epsilon_n)$ from Lemma~\ref{lem:linearindependence}.
    We define $\tilde Y = Y K^{-1/2}$, so that $\tilde Y^\top \tilde Y = K^{-1/2}KK^{-1/2} = I$. With this definition,
    \[ \norm{\tilde Y - Y} = \norm{K^{-1/2} - I} = O(r_0\epsilon_n),\]
    as needed.
  \end{proof}

  Now, we can apply the results of Appendix \ref{sec:app:perturbation} to get the following proposition.

  \begin{proposition}\label{prop:eigendecomposition_H}
    Assume that $\tau_{r_0} \leq 1 - c_{\text{KS}}$ for some constant $c_{\text{KS}}\in (0,1)$ independent from $n$. For $k \in [r_{+}]$, let $(\nu_k^+, v_k^{+})$ be the $(n-k+1)$th eigenpair of $H(\sqrt{d})$. Then, with high probability, the following holds:
    \begin{enumerate}
      \item For $k \in [r_+]$,
        \[ \left|\nu_k^+ - \left(\sqrt{d} - \mu_i^+\right)\left(\sqrt{d} - \frac{d}{\mu_i^+}\right)\right| \leq 2\sqrt{r_+}(\sqrt{d}+1). \]

      \item There exists an orthogonal matrix $\tilde O^+$ such that
        \[  \norm{V^+ - Y^+ O^+}_F \leq \frac{4\sqrt{r_+}(\sqrt{d}+1)}{d c_{\text{KS}}^2}.\]
    \end{enumerate}
    The same holds for the matrix $H(-\sqrt{d})$ and the pseudo-eigenvectors $Y^-$.
  \end{proposition}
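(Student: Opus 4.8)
The plan is to treat the $r_\pm$ informative eigenvectors $y_k^\pm$ of $\tilde B$ as approximate bottom eigenvectors of $H(\pm\sqrt d)$ and run a Hermitian perturbation argument built on the pseudo-eigenvalue bound of Lemma~\ref{lem:y_pseudo_eigen}. I detail $H(\sqrt d)$; the matrix $H(-\sqrt d)$ is treated identically after replacing $t=\sqrt d,\mu_k^+,\lambda_k^+$ by $t=-\sqrt d,\mu_k^-,\lambda_k^-$. \textbf{Step 1 (approximate spectral data).} First I would combine Lemma~\ref{lem:y_pseudo_eigen} with the (unlabeled) orthogonalization lemma immediately preceding the proposition to obtain an $n\times r_+$ matrix $\tilde Y^+$ with orthonormal columns, $\norm{\tilde Y^+-Y^+}=O(r_+\epsilon_n)$, and the diagonal matrix $\Theta^+:=\diag(\theta_1^+,\dots,\theta_{r_+}^+)$ with $\theta_k^+:=(\sqrt d-\mu_k^+)(\sqrt d-d/\mu_k^+)$, such that the residual $R^+:=H(\sqrt d)\tilde Y^+-\tilde Y^+\Theta^+$ satisfies $\norm{R^+}_F\le\sqrt{r_+}(\sqrt d+1)$ (each column has norm at most $\sqrt d+1$, the $O(d\epsilon_n)$ and $O(r_+\epsilon_n)$ corrections being negligible). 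I would then record two facts. First, $\theta_k^+=-d(1-\sqrt{\tau_k^+})^2/\sqrt{\tau_k^+}<0$, so $\tau_k^+\le\tau_{r_0}\le 1-c_{\text{KS}}$ gives $\theta_k^+\le-\tfrac14 d\,c_{\text{KS}}^2$, and $\mu\mapsto(\sqrt d-\mu)(\sqrt d-d/\mu)$ has derivative $\sqrt d(d/\mu^2-1)<0$ on $(\sqrt d,\infty)$, so the $\theta_k^+$ are increasing in $k$. Second, by the identity $H(t)y_k=(ty_k-x_k)(t-\lambda_k)$ of Lemma~\ref{lem:Hy} the $k$-th column of $R^+$ is $-(\sqrt d-\lambda_k)(x_k-\langle x_k,y_k\rangle y_k)$ plus an $O(1+d\epsilon_n)$ vector along $\tilde y_k^+$; by the vanishing of the off-diagonal inner products in Corollary~\ref{cor:inner_product}, these vectors are pairwise nearly orthogonal and nearly orthogonal to $\operatorname{range}(\tilde Y^+)$, whence $\norm{R^+}_{\mathrm{op}}=O(\sqrt d+1)$ and $\norm{\tilde Y^{+\top}R^+}=O(1)$ — this is what will make the constants work below.

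\textbf{Step 2 (counting the very negative eigenvalues).} The structural input is that $\theta_1^+,\dots,\theta_{r_+}^+$ are approximately the $r_+$ smallest eigenvalues of $H(\sqrt d)$, the rest of the spectrum lying well above them. For the upper bound on their number I would repeat the argument proving Theorem~\ref{thm:count}: by Lemma~\ref{lem:lower_bound}(iv) with a constant $\varepsilon$ of order $c_{\text{KS}}^2$, and using $\norm{A}=d+O(\sqrt d)$ (or $\norm{A}\le\log n$ when $d=o(\log n)$) with high probability, the number of eigenvalues of $H(\sqrt d)$ below $-\tfrac18 d\,c_{\text{KS}}^2$ is at most the number of real eigenvalues of $B$ above $\sqrt d+\varepsilon$, which by \cite{stephan2022sparse} together with the reality of the outliers (Section~\ref{sec:real_eigenvalue}) equals $r_+$, since the informative eigenvalues satisfy $\mu_k^+\ge\sqrt d/\sqrt{1-c_{\text{KS}}}>\sqrt d+\varepsilon$ for $d$ large while the rest of $\operatorname{spec}(B)$ lies in the disk of radius $\sqrt d(1+\epsilon_n)$. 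For the matching lower bound, Cauchy interlacing applied to the compression $\tilde Y^{+\top}H(\sqrt d)\tilde Y^+$ (whose eigenvalues lie within $\norm{R^+}_F$ of the $\theta_k^+$) shows that the $k$-th smallest eigenvalue of $H(\sqrt d)$ is at most $\theta_k^++\sqrt{r_+}(\sqrt d+1)<-\tfrac18 d\,c_{\text{KS}}^2$ once $\sqrt{r_+}(\sqrt d+1)\le\tfrac18 d\,c_{\text{KS}}^2$. Hence there are exactly $r_+$ eigenvalues of $H(\sqrt d)$ below $-\tfrac18 d\,c_{\text{KS}}^2$, they are the $\nu_k^+=\lambda_{n-k+1}(H(\sqrt d))$, and $\lambda_{n-r_+}(H(\sqrt d))\ge-\tfrac18 d\,c_{\text{KS}}^2$; in particular $\operatorname{spec}(\Theta^+)\subseteq(-\infty,-\tfrac14 d\,c_{\text{KS}}^2]$ is separated from the rest of $\operatorname{spec}(H(\sqrt d))\subseteq[-\tfrac18 d\,c_{\text{KS}}^2,\infty)$ by a gap $g\gtrsim d\,c_{\text{KS}}^2$.

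\textbf{Step 3 (perturbation).} I would then feed the residual bound of Step 1 and the gap $g$ of Step 2 into the Hermitian perturbation lemmas of Appendix~\ref{sec:app:perturbation}, with $P$ the spectral projector of $H(\sqrt d)$ onto its $r_+$ smallest eigenvalues, so $\operatorname{range}(P)=\operatorname{range}(V^+)$. For (ii), the Frobenius-norm Davis--Kahan $\sin\Theta$ estimate gives $\norm{(I-P)\tilde Y^+}_F\le\norm{R^+}_F/g\lesssim\sqrt{r_+}(\sqrt d+1)/(d\,c_{\text{KS}}^2)$, and a polar/Procrustes alignment yields an orthogonal $O^+$ with $\norm{V^+-\tilde Y^+O^+}_F$ of the same order; replacing $\tilde Y^+$ by $Y^+$ costs $O(r_+\epsilon_n)$, which proves (ii). For (i), the $\nu_k^+$ are the eigenvalues of $V^{+\top}H(\sqrt d)V^+$; writing $M:=\tilde Y^{+\top}V^+$ and combining $H(\sqrt d)V^+=V^+\diag(\nu_k^+)$ with $H(\sqrt d)\tilde Y^+=\tilde Y^+\Theta^++R^+$ gives the Sylvester identity $M\diag(\nu_k^+)-\Theta^+M=(R^+)^\top V^+$ with $\norm{(R^+)^\top V^+}\le\norm{R^+}_F$; since $M$ differs from an orthogonal matrix by $O(\norm{(I-P)\tilde Y^+}_{\mathrm{op}}^2)=O((\sqrt d+1)^2/g^2)$ — here the operator-norm bound $\norm{R^+}_{\mathrm{op}}=O(\sqrt d+1)$ of Step 1 is essential to keep this small — solving the equation and invoking Weyl's inequality gives $|\nu_{\mathfrak p(k)}^+-\theta_k^+|\lesssim\sqrt{r_+}(\sqrt d+1)$ for some permutation $\mathfrak p\in\mathfrak S_{r_+}$, necessarily the identity because both $(\theta_k^+)_k$ and $(\nu_k^+)_k$ are increasing; this is (i).

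\textbf{Main obstacle.} The delicate point is Step 2: certifying that $H(\pm\sqrt d)$ has no spurious very negative eigenvalues beyond the $r_\pm$ informative ones. A direct random-matrix bound on $\lambda_{\min}(H(\pm\sqrt d))$ is the wrong tool here — the eigenvector-localization phenomenon that motivates this paper governs the (irrelevant, large-positive) top of the spectrum of $H$ — and instead the deterministic Ihara--Bass identity of Lemma~\ref{lem:lower_bound}(iv) transfers the count to the non-backtracking side, where the number of real outliers is known to be exactly $r_\pm$. A secondary nuisance is that $\norm{H(\pm\sqrt d)}_{\mathrm{op}}\asymp d^{3/2}$, so a naive eigenvalue perturbation bound would lose a factor $\sqrt d$; avoiding this loss is precisely where the near-orthogonality of the columns of $R^\pm$, furnished by Corollary~\ref{cor:inner_product}, is used. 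Everything else is routine once the residual bound of Lemma~\ref{lem:y_pseudo_eigen} and the gap of Step 2 are in hand.
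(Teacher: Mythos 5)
Your proposal follows the same overall architecture as the paper's proof: orthogonalize the $y_k^\pm$ and use Lemma~\ref{lem:y_pseudo_eigen} to get pseudo-eigenpairs with residual $\sqrt{d}+1$ per column, use Lemma~\ref{lem:lower_bound}(iv) together with the real non-backtracking outliers to certify that $H(\pm\sqrt d)$ has no other very negative eigenvalues (hence a gap of order $dc_{\text{KS}}^2$ below which only the $r_\pm$ informative eigenvalues live), and then conclude (ii) by a Davis--Kahan bound column by column plus the Procrustes alignment $O^+=(V^{+\top}\tilde Y^+)^\top$, exactly as in Appendix~\ref{sec:app:perturbation}. The genuine difference is part (i): the paper obtains the eigenvalue estimate in one stroke from Lemma~\ref{lem:app:local_weyl} (the construction $S=V\Lambda V^\top+P_{E^\perp}MP_{E^\perp}$ plus Weyl), which only needs the Frobenius residual bound; your route through the compression $M=\tilde Y^{+\top}V^+$ and the Sylvester identity $MN-\Theta^+M=R^{+\top}V^+$ can be made to work, but it forces you to prove the extra facts $\norm{R^+}_{\mathrm{op}}=O(\sqrt d+1)$ and $\norm{\tilde Y^{+\top}R^+}=O(1)$ (to control $\norm{\Theta^+}\,\norm{M^\top M-I}$), none of which the paper needs, and the step ``solving the equation and invoking Weyl'' is the one place left genuinely sketchy, since $M^{-1}\Theta^+M$ is no longer Hermitian and eigenvalue perturbation there needs a short additional argument. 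So: same skeleton, a heavier and less direct mechanism for (i).

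Two quantitative slips, both repairable but worth flagging. First, invoking Lemma~\ref{lem:lower_bound}(iv) with a \emph{constant} $\varepsilon\asymp c_{\text{KS}}^2$ only bounds the number of eigenvalues below $-(3\sqrt d+\norm{A})\varepsilon$; in the part of regime \eqref{eq:regime_d} where $d\lesssim\log n$ one has $\norm{A}\gg d$ (and $\norm{A}\neq d+O(\sqrt d)$ there), so that threshold sits far below your target $-\tfrac18 dc_{\text{KS}}^2$ and the count does not transfer in the direction you need. You must shrink $\varepsilon$ to $\asymp dc_{\text{KS}}^2/(3\sqrt d+\norm{A})$, which is still $\gg\epsilon_n$ of \eqref{eq:def_epsn}, so the fix is one line; the paper sidesteps this by taking the rest-of-spectrum threshold at $-O(\log(n)\epsilon_n)$. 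Second, with your coarser gap $\tfrac18 dc_{\text{KS}}^2$ you only recover the stated bounds up to absolute constants (e.g.\ $8$ instead of $4$ in (ii), and ``$\lesssim$'' instead of $2\sqrt{r_+}(\sqrt d+1)$ in (i)); the paper's near-zero threshold and Lemma~\ref{lem:app:local_weyl} are what produce the explicit constants in the proposition.
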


  \begin{proof}
    The first statement is a simple application of Lemma \ref{lem:app:local_weyl}. Letting $v_i = \tilde y_i$ and using Lemma \ref{lem:y_pseudo_eigen}, we know that there are $r_+$ eigenvalues of $H(\sqrt{d})$ such that
    \[ \left|\nu_k - \left(\sqrt{d} - \mu_i^+\right)\left(\sqrt{d} - \frac{d}{\mu_i^+}\right)\right|\leq 2\sqrt{r_+}(\sqrt d+1).  \]
    Since we know that $(n- r_+)$ eigenvalues of $H(\sqrt{d})$ are above $-O(\log(n)\epsilon_n)$, those eigenvalues are necessarily the $r_+$ lowest ones, and we can assume that they are ordered from highest to lowest.

  For the second statement, we let $E$ be the subspace spanned by $v_1^+, \dots, v_{r_+}^+$. Let $\mathrm{Sp}(M)$ be the set of all eigenvalues of a matrix $M$.  Let $P_E$ be the orthogonal projection matrix onto $E$. Then,   \[\mathrm{Sp}(M\big|_{E^\bot})) \subseteq [-O(\log(n)\epsilon_n), +\infty),\] and we can apply Lemma \ref{lem:davis_kahan} to $v = \tilde y_k^+$ to find
  \[ \norm{\tilde y_k^+ - P_E \tilde y_k} = \frac{\sqrt{d}+1}{\left|\left(\sqrt{d} - \mu_i^+\right)\left(\sqrt{d} - \frac{d}{\mu_i^+}\right)\right|} + O(\log(n)\epsilon_n). \]
  When $\tau_i^+ < 1 - c_{\text{KS}}$, we can write
  \[ \left(\sqrt{d} - \mu_i^+\right)\left(\sqrt{d} - \frac{d}{\mu_i^+}\right) = d \left(1 - \sqrt{\tau_i^+}\right)\left(1 - \frac1{\sqrt{\tau_i^+}}\right) < - \frac{d c_{\text{KS}}^2}{4} \]
  Thus, for large enough $n$,
  \[ \norm{\tilde Y^+ - P_E \tilde Y^+}_F \leq \frac{4\sqrt{r_+}(\sqrt{d}+1)}{d c_{\text{KS}}^2}\]
  But we know that $\norm{\tilde Y^+ - Y^+} = O(r_+\epsilon_n)$ and
  \[ P_E \tilde Y^+ = V^+ (V^{+\top} \tilde Y^+), \]
  and the latter is an orthogonal matrix. Thus, we can set $O^+ = (V^{+\top} \tilde Y^+)^\top$, since
  \[ \norm{V^+ - Y^+ O^+}_F = \norm{Y^+ - V^+ (O^+)^\top}_F \leq \frac{4\sqrt{r_+}(\sqrt{d}+1)}{d c_{\text{KS}}^2}, \]
  having absorbed the $O(\sqrt{r_+}\log (n) \epsilon_n)$ term as in Lemma \ref{lem:y_pseudo_eigen}.
\end{proof}

\section{Weak recovery with the Bethe-Hessian}\label{sec:weak_recovery}

We now move on to the proof of Theorem \ref{thm:weak_recovery}. We first show a non-asymptotic version of Corollary \ref{cor:bethe_overlaps}:
\begin{lemma}\label{lem:bethe_overlaps_sharp}
  Assume that $\max_{i \in [r_0]} \tau_i \leq 1 - c_{\text{KS}}\in (0,1)$ for some constant $c_{\text{KS}}$ independent from $n$. Define the two matrices $\check\Phi^+ \in \dR^{n \times r_+}$ and $\check\Phi^- \in \dR^{n \times r_-}$ such that
  \[ \check\Phi^+_{xk} = \phi_k(\sigma(x)) \quand \check\Phi^-_{xk} = \phi_{r-k}(\sigma(x)).\]
  Then there exists an orthogonal matrix $\tilde O^+\in \dR^{r_+ \times r_+}$ such that
  \[ \norm{V^+ - \check\Phi^+ \tilde O^+}_F \leq 2\sum_{i=1}^{r_+} \tau_i^+ + \frac{4\sqrt{r_+}(\sqrt{d}+1)}{d c_{\text{KS}}^2}, \]
  and the same holds for $V^-, \Phi^-$.
\end{lemma}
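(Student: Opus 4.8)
The plan is to deduce Lemma~\ref{lem:bethe_overlaps_sharp} by composing the eigenvector approximation of Proposition~\ref{prop:eigendecomposition_H}(ii) with an explicit computation of the overlaps between the pseudo-eigenvectors $y_i^+$ and the planted vectors $\check\phi_k^+ := \phi_k\circ\sigma$. Throughout I normalize $\check\Phi^+$ so that its columns are the \emph{unit} vectors $\check\phi_k^+/\sqrt n$ — recall $\norm{\check\phi_k^+}_2^2 = n\langle\phi_k,\phi_k\rangle_\pi = n$, so that $\check\Phi^+/\sqrt n$ is a matrix with orthonormal columns. Since Proposition~\ref{prop:eigendecomposition_H}(ii) already provides an orthogonal $O^+$ with $\norm{V^+ - Y^+ O^+}_F \le 4\sqrt{r_+}(\sqrt d+1)/(d c_{\text{KS}}^2)$, the triangle inequality together with the orthogonal invariance of $\norm{\cdot}_F$ reduces the lemma to producing an orthogonal $\hat O$ for which $\norm{Y^+ - (\check\Phi^+/\sqrt n)\hat O}_F$ is controlled by $\sum_{i\le r_+}\tau_i^+$; the two rotations then compose into the $\tilde O^+$ of the statement.

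For this reduced problem I would first compute the cross-Gram matrix $M := (\check\Phi^+/\sqrt n)^\top Y^+ \in \dR^{r_+\times r_+}$. Combining $\norm{y_i - u_i/\norm{u_i}} = O(\epsilon_n)$ from Proposition~\ref{prop:eigen_weak_convergence}, the identity $\langle u_i, \check\phi_j^+\rangle = n\delta_{ij} + O(n\epsilon_n)$ (which is \eqref{eq:scal_uphi}, up to the sign of $\phi_j$ that I absorb into the $j$-th column of $\check\Phi^+$), and $\norm{u_i}_2^2 = n(1-\tau_i)^{-1} + O(n\epsilon_n)$ from \eqref{eq:scal_uu}, one obtains
\[ \bigl\langle y_i,\, \check\phi_j^+/\sqrt n\bigr\rangle = \sqrt{1-\tau_i^+}\;\delta_{ij} + O(\epsilon_n), \qquad i,j\in[r_+], \]
so that $M = \Delta + O(r_+\epsilon_n)$ in operator norm, where $\Delta = \operatorname{diag}\bigl(\sqrt{1-\tau_1^+},\dots,\sqrt{1-\tau_{r_+}^+}\bigr)$ is already sorted decreasingly since $k\mapsto\tau_k^+$ is increasing. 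Writing an SVD $M = P\Sigma R^\top$ and setting $\hat O = RP^\top$, the orthonormality of the columns of $Y^+$ and of $\check\Phi^+/\sqrt n$ gives
\[ \norm*{Y^+ - (\check\Phi^+/\sqrt n)\hat O}_F^2 = 2r_+ - 2\operatorname{tr}\bigl(\hat O^\top M\bigr) = 2r_+ - 2\sum_{i=1}^{r_+}\sigma_i(M), \]
and Weyl's inequality for singular values yields $\sigma_i(M) = \sqrt{1-\tau_i^+} + O(r_+\epsilon_n)$. Hence the right-hand side equals $2\sum_{i=1}^{r_+}\bigl(1-\sqrt{1-\tau_i^+}\bigr) + O(r_+^2\epsilon_n)$, and the elementary inequality $1-\sqrt{1-\tau}\le\tau$ on $[0,1]$ gives $\norm{Y^+ - (\check\Phi^+/\sqrt n)\hat O}_F^2 \le 2\sum_{i=1}^{r_+}\tau_i^+ + O(r_+^2\epsilon_n)$. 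Combining this with Proposition~\ref{prop:eigendecomposition_H}(ii) through the triangle inequality — taking $\tilde O^+ = \hat O O^+$, and noting $O(r_+\epsilon_n)\ll\sqrt{r_+}/d$ in the regime~\eqref{eq:regime_d} — yields the claimed estimate; the minus case is identical, with $\mu_k^-,\tau_k^-$ and $\check\Phi^-$ replacing the plus quantities.

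I do not expect a genuine obstacle: the substantive inputs — local weak convergence and the inner-product asymptotics of Proposition~\ref{prop:scal_u}, the Davis--Kahan-type estimate packaged in Proposition~\ref{prop:eigendecomposition_H}, and the realness of the outlier eigenvalues (Theorem~\ref{thm:real_eigenvalues}) — are already established, so this is in essence a corollary. The two points that need care are (a) keeping the off-diagonal overlaps at the quantitative level $O(\epsilon_n)$ rather than merely $o(1)$, so that after summing the $O(r_+^2)$ entries the operator-norm error on $M$ stays negligible against $\sqrt{r_+}/d$ — this is exactly where one needs the quantitative form of local weak convergence from \cite{stephan2022sparse} (as quoted in Proposition~\ref{prop:eigen_weak_convergence}) rather than a soft convergence statement — and (b) the bookkeeping of the normalization of $\check\Phi^+$ and of the composition of the two orthogonal rotations. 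It is worth noting that the quantity that comes out cleanly is the \emph{squared} Frobenius distance, $\norm{V^+ - (\check\Phi^+/\sqrt n)\tilde O^+}_F^2 = O\bigl(\sum_{i\le r}\tau_i + r/d\bigr)$, which is precisely the form in which it enters the $k$-means analysis behind Theorem~\ref{thm:weak_recovery}.
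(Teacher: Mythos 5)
Your proposal follows the same overall strategy as the paper: both proofs compose the eigenvector approximation of Proposition~\ref{prop:eigendecomposition_H}(ii) with the overlap asymptotics $\langle \check\phi_i^+, y_i^+ \rangle = \sqrt{1-\tau_i^+} + O(\epsilon_n)$ and conclude by a triangle inequality with $\tilde O^+$ a product of two rotations. The difference is in how the overlap information is turned into a Frobenius bound: the paper quotes the overlap directly from \cite[Theorem 2]{stephan2022sparse}, aligns column by column (writing $\check\phi_i^+ = \check\Phi^+ w_i^+$ and bounding each $\norm{y_i^+ - \check\phi_i^+}$), and then orthogonalizes the coefficient matrix $W^+$; you instead re-derive the overlaps from Proposition~\ref{prop:eigen_weak_convergence}, \eqref{eq:scal_uphi} and \eqref{eq:scal_uu}, and perform a single orthogonal-Procrustes step on the cross-Gram matrix $M = (\check\Phi^+/\sqrt n)^\top Y^+$, controlling its singular values via Weyl. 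This is a legitimate, self-contained alternative, and it has the merit of making explicit the $1/\sqrt n$ normalization of $\check\Phi^+$ that the paper leaves implicit (as written in the statement, the columns of $\check\Phi^+$ have norm $\sqrt n$). One trivial slip: with $M = P\Sigma R^\top$, the maximizer of $\tr(\hat O^\top M)$ is $\hat O = PR^\top$, not $RP^\top$.

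The one substantive discrepancy is with the literal form of the bound. Your argument controls the \emph{squared} distance, $\norm{Y^+ - (\check\Phi^+/\sqrt n)\hat O}_F^2 \leq 2\sum_i \tau_i^+ + O(r_+^2\epsilon_n)$, hence the unsquared norm is only bounded by $\sqrt{2\sum_i \tau_i^+} + \dots$, which exceeds the stated $2\sum_i \tau_i^+$ precisely when $\sum_i\tau_i^+ < 1/2$, i.e.\ in the regime relevant for Corollary~\ref{cor:weak_consistency}. You should be aware, however, that the paper's own proof contains the same norm-versus-squared-norm conflation: for unit vectors with inner product $\sqrt{1-\tau_i^+}$ one has $\norm{y_i^+ - \check\phi_i^+}^2 = 2(1-\sqrt{1-\tau_i^+}) \leq 2\tau_i^+$, so the paper's displayed per-column estimate is really a bound on the square, and propagating the correct version gives $\norm{Y^+ - \check\Phi^+ W^+}_F \leq \sqrt{2\sum_i\tau_i^+}$, matching yours. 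So you have not established the lemma with the constant $2\sum_i\tau_i^+$ as stated, but neither does the paper's argument; and, as you note, the quantity that actually enters the $k$-means analysis of Theorem~\ref{thm:weak_recovery} is the squared Frobenius distance, for which your bound $O(\sum_i\tau_i + r/d)$ suffices (it changes the error term in Theorem~\ref{thm:weak_recovery} from $C(\sum_i\tau_i)^2$ to $C\sum_i\tau_i$, which still yields weak consistency when $\sum_i\tau_i = o(1)$).
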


\begin{proof}
  From \cite[Theorem 2]{stephan2022sparse}, for any $i\in [r_+]$ there exists a unit eigenvector $\phi_i^+$ of $Q$ associated to $\mu_i^+$ such that
  \[ \langle \check\phi_i^+, y_i^+ \rangle = \sqrt{1 - \tau_i^+} + O(\epsilon_n) \quad \text{where} \quad  \check\phi_i^+(x) = \phi_i^+(\sigma(x)).\]
  This implies that
  \[ \norm{ y_i^+ - \check\phi_i^+} = 2 (1 - \sqrt{1 - \tau_i^+}) + O(\epsilon_n) \leq 2\tau_i^+,\]
  having again absorbed the $O(\epsilon_n)$ into the bound. Since eigenspaces are unique, there exists a unit vector $w_i^+ \in \dR^{r_+}$ such that $\check\phi_i^+ = \check\Phi^+ w_i^+$, and hence
  \[ \norm{Y^+ - \check\Phi^+ W^+}_F \leq 2\sum_{i=1}^{r_+} \tau_i^+. \]
  By the same arguments as Proposition \ref{prop:eigendecomposition_H}, since $Y^+$ is close to being an orthogonal matrix, so is $W^+$ and we have as well
  \[ \norm{Y^+ - \check\Phi^+ \tilde W^+}_F \leq 2\sum_{i=1}^{r_+} \tau_i^+ \]
  with $\tilde W^+$ orthogonal. The result ensues by setting $\tilde O^+ = \tilde W^+ O^+$ and using the triangle inequality.
\end{proof}

We are now finally able to show Theorem \ref{thm:weak_recovery}. We use the following analysis of approximate $k$-means from \cite{lei2015consistency}:
\begin{lemma}[\cite{lei2015consistency}, Lemma 5.3]\label{lem:k-means-consistency}
  Let $\eta > 0$ and two matrices $U, \hat U \in \dR^{n \times r}$ such that $U = \Sigma P$ with $\Sigma \in \mathbb{M}_{n, r}$. Let $(\hat\Sigma, \hat P)$ be an (1+$\eta$)-approximate solution to the $k$-means problem run on $\hat U$. For $k \in [r]$, define $\delta_k = \min_{\ell \neq k} \norm{X_{k:} - X_{\ell:}}$. Then there exists sets $S_1, \dots, S_r \subseteq [n]$ such that all vertices in $S_k$ have label $i$, and
  \[ \sum_{k=1}^r \delta_k^2 S_k \leq (16 + 8\eta) \norm{\hat U - U}_F^2. \]
  Further, if
  \begin{equation}
    (16 + 8\eta) \norm{\hat U - U}_F^2 \leq n \pi_k \delta_k^2, \label{eq:k-means-condition}
  \end{equation}
  then there exists a label permutation $J$ such that $\hat\Sigma$ and $\Sigma J$ agree on $[n]\setminus (\bigcup_i S_i)$.
\end{lemma}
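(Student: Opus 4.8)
The statement is Lemma 5.3 of \cite{lei2015consistency}, so the plan is simply to reproduce its short argument. Write $\hat\Theta := \hat\Sigma \hat P$, let $C_k := \{i \in [n] : \sigma(i) = k\}$ so that $|C_k| = n\pi_k$ and $U_{i:} = P_{\sigma(i):}$, and abbreviate $\xi := \norm{\hat U - U}_F$; throughout we implicitly assume the rows $P_{k:}$ are pairwise distinct, so that each $\delta_k > 0$.

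First I would bound $\norm{\hat\Theta - U}_F$. Since $(\Sigma, P)$ is feasible for the $k$-means problem on $\hat U$ with value $\norm{\Sigma P - \hat U}_F^2 = \xi^2$, the $(1+\eta)$-approximation property \eqref{eq:approx_k_means_eta} gives $\norm{\hat\Theta - \hat U}_F^2 \le (1+\eta)\xi^2$; combined with $\norm{a+b}^2 \le 2\norm a^2 + 2\norm b^2$ this yields $\norm{\hat\Theta - U}_F^2 \le 2(1+\eta)\xi^2 + 2\xi^2 = (4+2\eta)\xi^2$. Next I would introduce the misclustered sets $S_k := \{i \in C_k : \norm{\hat\Theta_{i:} - P_{k:}} \ge \delta_k/2\}$. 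Each $i \in S_k$ contributes at least $\delta_k^2/4$ to $\norm{\hat\Theta - U}_F^2$, and since the $S_k$ are disjoint subsets of $[n]$, summing gives $\sum_k \frac{\delta_k^2}{4}|S_k| \le \norm{\hat\Theta - U}_F^2 \le (4+2\eta)\xi^2$, i.e. $\sum_k \delta_k^2 |S_k| \le (16+8\eta)\xi^2$, which is the first assertion (with $|S_k|$ in place of the typo'd $S_k$).

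For the second assertion, under $(16+8\eta)\xi^2 < n\pi_k\delta_k^2$ for every $k$ the inequality just shown forces $|S_k| < n\pi_k = |C_k|$, so $C_k \setminus S_k \ne \emptyset$; fix $i_k \in C_k \setminus S_k$ and set $J(k) := \hat\sigma(i_k)$. If $J(k) = J(k') = \ell$ for some $k \ne k'$, then $\hat\Theta_{i_k:} = \hat P_{\ell:} = \hat\Theta_{i_{k'}:}$, while $\norm{\hat\Theta_{i_k:} - P_{k:}} < \delta_k/2$ and $\norm{\hat\Theta_{i_{k'}:} - P_{k':}} < \delta_{k'}/2$, so by the triangle inequality $\norm{P_{k:} - P_{k':}} < \tfrac12(\delta_k + \delta_{k'}) \le \max(\delta_k,\delta_{k'}) \le \norm{P_{k:} - P_{k':}}$, a contradiction; hence $J$ is injective and thus a permutation of $[r]$. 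Finally, for any $i \in C_k \setminus S_k$ put $a := \hat\sigma(i)$ and let $k''$ be the unique index with $J(k'') = a$; then $\hat\Theta_{i:} = \hat P_{a:} = \hat\Theta_{i_{k''}:}$, and the same triangle-inequality comparison forces $k'' = k$, i.e. $\hat\sigma(i) = J(k)$. Thus $\hat\Sigma$ and $\Sigma J$ agree on $[n]\setminus\bigcup_k S_k$.

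There is no deep obstacle here; the whole proof is elementary. The only step with real content is the geometric comparison in the last paragraph, where the separation $\norm{P_{k:} - P_{k':}} \ge \max(\delta_k,\delta_{k'})$ of distinct centroids is used to force non-misclustered vertices lying in distinct true clusters to receive distinct estimated labels — this is precisely where the hypothesis $(16+8\eta)\xi^2 < \min_k n\pi_k\delta_k^2$ and the nondegeneracy $\delta_k > 0$ enter. The remaining care is bookkeeping the constant $16 + 8\eta$ correctly through the two uses of $\norm{a+b}^2 \le 2\norm a^2 + 2\norm b^2$ together with the $\delta_k/2$ threshold.
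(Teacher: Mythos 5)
Your proof is correct and follows essentially the same route as the source the paper relies on: the lemma is imported verbatim from Lei and Rinaldo (Lemma 5.3) without an independent proof, and your argument — the $(1+\eta)$-approximation bound giving $\norm{\hat\Theta - U}_F^2 \le (4+2\eta)\norm{\hat U - U}_F^2$, the misclustering sets defined by the $\delta_k/2$ threshold, and the triangle-inequality injectivity argument for the permutation $J$ — is exactly the intended one. The only small point is that you correctly need (and use) the condition with strict inequality to ensure $C_k \setminus S_k \neq \emptyset$, in line with the original statement, whereas the paper's restatement writes it with $\le$; this, like the typos $S_k$ for $|S_k|$ and ``label $i$'' for ``label $k$,'' is a slip in the transcription rather than in your argument.
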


\begin{proof}[Proof of Theorem~\ref{thm:weak_recovery}]
  Due to the degree concentration of $G$, below we work with $H(\pm \sqrt{d})$, and the same estimates hold for $H\left(\pm \sqrt{\hat{d}}\right)$ with $\hat{d}$ defined in \eqref{eq:hatd} by the same argument as in the proof of Corollary~\ref{cor:hatd}.

  Assume that $\tau_r < 1$, and $V = [V^+, V^-]$ be the matrix obtained after step 3 of Algorithm \ref{alg:spectral_clust}. Since $r_- + r_+ = r_0 = r$, Lemma \ref{lem:bethe_overlaps_sharp} implies that there exists an orthogonal matrix $O$ such that
  \[ \norm{V - \check\Phi O}_F \leq 2\sum_{i=1}^r \tau_i + \frac{4r(\sqrt{d}+1)}{d (1-\tau_r)^2} =: \varepsilon_{\text{KS}},\]
  where $\check\Phi_{xk} =\phi_k(\sigma(x))$. If we define $\Sigma_{xk} = \mathbf{1}_{\sigma(x) = k}$, we immediately have $\check \Phi = \Sigma\Phi$, where $\Phi$ is the eigenvector matrix of $Q$.

  We apply Lemma \ref{lem:k-means-consistency} to $V$ and $\check\Phi O$; Lemma 2.1 in \cite{lei2015consistency} implies that we can choose $\delta_k = 1/\sqrt{n\pi_k}$, so that
  \[ \sum_{k=1}^r \frac{|S_k|}{n\pi_k} \leq (16 + 8\eta) \varepsilon_{\text{KS}}^2. \]
  Condition \eqref{eq:k-means-condition} then reads
  \[ (16 + 8\eta) \varepsilon_{\text{KS}}^2 \leq 1, \]
  which is satisfied whenever $d$ is large enough and $\tau_r$ small enough. Finally, the fact that $\Sigma J$ and $\hat \Sigma$ agree outside of $\bigcup_i S_i$ means that
  \[ \operatorname{ov}(\sigma, \hat\sigma) \geq 1 - \frac1n\sum_{i=1}^r |S_i| \geq 1- \sum_{k=1}^r \frac{|S_k|}{n\pi_k} \geq 1 - (16 + 8\eta) \varepsilon_{\text{KS}}^2. \]
\end{proof}

\section{Spectral stability of Hermitian matrices}\label{sec:app:perturbation}

This section contains the proofs of the matrix perturbation results needed in the proofs. We begin with a simple lemma:

\begin{lemma}[Approximate eigenvectors imply approximate eigenvalues]
  Let $M$ be a Hermitian matrix and $v$ be a normalized vector such that $\|Mv\|\leq \varepsilon$. Then $M$ has an eigenvalue in $[-\varepsilon, \varepsilon]$.
\end{lemma}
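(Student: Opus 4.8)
The plan is the textbook argument: diagonalize $M$ and expand $v$ in an eigenbasis. First I would invoke the spectral theorem for Hermitian matrices to write $M = \sum_{i=1}^n \lambda_i e_i e_i^*$, where $(e_i)_{i\in[n]}$ is an orthonormal eigenbasis and the $\lambda_i \in \dR$ are the (necessarily real) eigenvalues. Writing $v = \sum_i c_i e_i$ with $\sum_i |c_i|^2 = \norm{v}^2 = 1$, one has $Mv = \sum_i \lambda_i c_i e_i$, and by orthonormality of the $e_i$,
\[ \norm{Mv}^2 = \sum_{i=1}^n \lambda_i^2 |c_i|^2. \]

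The key step is then a one-line contradiction. Suppose no eigenvalue of $M$ lies in $[-\varepsilon,\varepsilon]$, i.e.\ $\lambda_i^2 > \varepsilon^2$ for every $i$. Then
\[ \norm{Mv}^2 = \sum_{i=1}^n \lambda_i^2 |c_i|^2 > \varepsilon^2 \sum_{i=1}^n |c_i|^2 = \varepsilon^2, \]
contradicting the hypothesis $\norm{Mv} \leq \varepsilon$. Hence at least one $\lambda_i \in [-\varepsilon, \varepsilon]$. (Equivalently, the computation above shows $\norm{Mv} \geq \operatorname{dist}\!\big(0, \mathrm{Sp}(M)\big)\norm{v}$, so $\operatorname{dist}(0, \mathrm{Sp}(M)) \leq \varepsilon$, which is the same conclusion.) There is no genuine obstacle in this lemma; the only point requiring care is that $M$ is truly Hermitian, so that a full orthonormal eigenbasis with real eigenvalues is available — this is part of the hypothesis, so nothing further is needed.
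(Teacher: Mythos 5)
Your proposal is correct and follows essentially the same route as the paper: argue by contradiction that if every eigenvalue of $M$ had absolute value greater than $\varepsilon$, then $\norm{Mv} > \varepsilon$ for any unit vector $v$. You simply make explicit, via the spectral decomposition, the one-line bound the paper leaves implicit.
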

\begin{proof}
  Suppose not, then all eigenvalues of $M$ in absolute value are greater than $\varepsilon$. Then $\|Mv\|>\varepsilon$ for any unit vector $v$, a contradiction.
\end{proof}

We show a generalization of this result, dealing with multiple pseudo-eigenvectors at once:

\begin{lemma}\label{lem:app:local_weyl}
  Let $M\in \dR^{n\times n}$ be a Hermitian matrix, and $(\lambda_1, v_1), \dots, (\lambda_k, v_k)$ be $k$ approximate eigenpairs such that the $v_i$ are orthonormal and $\norm{Mv_i - \lambda_i v_i} \leq \varepsilon$ for all $i$. Then there exist $k$ eigenvalues $\nu_1, \dots, \nu_k$ of $M$ such that $|\lambda_i - \nu_i| \leq 2\sqrt{k} \varepsilon$.
\end{lemma}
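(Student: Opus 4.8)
The plan is to compare the action of $M$ on the $k$-dimensional subspace $E = \operatorname{span}(v_1, \dots, v_k)$ with a diagonal model and invoke the Courant--Fisher min-max principle. First I would form the matrix $V \in \dR^{n \times k}$ whose columns are the orthonormal $v_i$, and the diagonal matrix $\Lambda = \operatorname{diag}(\lambda_1, \dots, \lambda_k)$. The hypothesis says precisely that $\norm{MV - V\Lambda} \leq \varepsilon$ in the column-wise sense, and hence $\norm{MV - V\Lambda}_F \leq \sqrt{k}\,\varepsilon$, which in particular bounds the operator norm $\norm{MV - V\Lambda} \leq \sqrt{k}\,\varepsilon$. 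The key consequence I want is that the compressed matrix $M_E := V^\top M V \in \dR^{k \times k}$ is close to $\Lambda$: indeed $\norm{V^\top M V - \Lambda} = \norm{V^\top(MV - V\Lambda)} \leq \norm{MV - V\Lambda} \leq \sqrt{k}\,\varepsilon$, so by Weyl's inequality the eigenvalues of $M_E$, call them $\theta_1 \geq \dots \geq \theta_k$, satisfy $|\theta_i - \lambda_{\pi(i)}| \leq \sqrt{k}\,\varepsilon$ for the permutation sorting the $\lambda_i$.

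Next I would pass from the eigenvalues of the compression $M_E$ to genuine eigenvalues of $M$. By the Courant--Fisher characterization applied inside $E$, for each $j \in [k]$ there is a $j$-dimensional subspace $F_j \subseteq E$ on which the Rayleigh quotient of $M_E$ (equivalently, of $M$, since $F_j \subseteq E$) is at least $\theta_j$ up to the error coming from $M_E$ versus $M$ restricted to $E$ — but here there is no such error, because for $y \in E$ one has $y^\top M y = y^\top V V^\top M V V^\top y$... wait, that is not an identity. Let me instead argue directly: for a unit vector $y = Vc \in E$ with $c \in \dR^k$, $\norm{c} = 1$, we have $y^\top M y = c^\top (V^\top M V) c = c^\top M_E c$, which \emph{is} an exact identity. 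Hence the Rayleigh quotient of $M$ on $E$ coincides exactly with that of $M_E$ on $\dR^k$. Therefore, by the min-max principle for $M$,
\[
  \lambda_j(M) = \max_{\substack{F \subseteq \dR^n \\ \dim F = j}} \min_{\substack{y \in F \\ \norm{y}=1}} y^\top M y \geq \min_{\substack{y \in F_j \\ \norm{y}=1}} y^\top M y = \min_{\substack{c \in G_j \\ \norm{c}=1}} c^\top M_E c = \theta_j,
\]
where $F_j = V G_j$ and $G_j$ is the top-$j$ eigenspace of $M_E$; and dually $\lambda_{n-k+j}(M) \leq \theta_j$ using the bottom-$j$ eigenspace. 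So $M$ has at least $j$ eigenvalues $\geq \theta_j$ and at least $k-j+1$ eigenvalues $\leq \theta_j$ for each $j$, which forces the eigenvalues $\lambda_1(M) \geq \dots \geq \lambda_k(M)$ of $M$ to interlace the $\theta_j$ in the sense $\lambda_j(M) \geq \theta_j \geq \lambda_{n-k+j}(M)$. From this I can select $\nu_j := \lambda_j(M)$ for the first half or, more carefully, use the Cauchy interlacing theorem directly: since $M_E$ is the compression of $M$ to a $k$-dimensional subspace, Cauchy interlacing gives $\lambda_{n-k+j}(M) \leq \theta_j \leq \lambda_j(M)$ for all $j \in [k]$.

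Finally I would combine the two bounds. Cauchy interlacing alone only traps each $\theta_j$ between two eigenvalues of $M$ that may be far apart, so the cleanest route is: by Cauchy interlacing applied to the $k$-dimensional compression, there exist $k$ eigenvalues $\nu_1, \dots, \nu_k$ of $M$ (namely one can take a matching via the interlacing pattern) with $|\theta_j - \nu_j| \leq \sqrt{k}\,\varepsilon$ — actually the sharp statement I want is that $\norm{M_E - V^\top M V} = 0$ trivially and the relevant comparison is between $\Lambda$ and the eigenvalues of $M$; so I combine $|\lambda_{\pi(i)} - \theta_i| \leq \sqrt{k}\,\varepsilon$ (Weyl on $M_E$ vs $\Lambda$) with $|\theta_i - \nu_i| \leq \sqrt{k}\,\varepsilon$ for suitable eigenvalues $\nu_i$ of $M$ (interlacing), and the triangle inequality yields $|\lambda_i - \nu_i| \leq 2\sqrt{k}\,\varepsilon$ after relabeling by $\pi$. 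I expect the main obstacle to be the bookkeeping in this last step — making the passage from "$\theta_i$ interlaces the spectrum of $M$" to "there is a genuine eigenvalue of $M$ within $\sqrt{k}\varepsilon$ of $\theta_i$, with distinct eigenvalues assigned to distinct indices" fully rigorous, which is handled by the Cauchy interlacing inequalities $\lambda_{n-k+i}(M) \leq \theta_i \leq \lambda_i(M)$ together with the observation that consecutive $\theta_i$ differ by at most... no: the honest fix is that interlacing gives $\theta_i \in [\lambda_{n-k+i}(M), \lambda_i(M)]$, and since the $\theta_i$ are themselves within $\sqrt k \varepsilon$ of the sorted $\lambda$'s, a direct counting/pigeonhole argument produces the desired matching. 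The rest is routine.
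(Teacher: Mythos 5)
Your proposal has a genuine gap at the final step, and it is exactly the step that carries the content of the lemma. The reduction to the compression $M_E = V^\top M V$ and the bound $\norm{M_E - \Lambda} \leq \sqrt{k}\,\varepsilon$ are fine, as is the observation that the Rayleigh quotient of $M$ on $E$ agrees with that of $M_E$. But the claim ``$|\theta_i - \nu_i| \leq \sqrt{k}\,\varepsilon$ for suitable eigenvalues $\nu_i$ of $M$ (interlacing)'' is not a consequence of Cauchy interlacing: interlacing only traps $\theta_i$ in the (possibly huge) interval $[\lambda_{n-k+i}(M), \lambda_i(M)]$ and gives no quantitative closeness at all. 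Indeed, eigenvalues of a compression can be far from the spectrum of $M$ in general — take $M = \operatorname{diag}(1,-1)$ and $E = \operatorname{span}\bigl((1,1)/\sqrt{2}\bigr)$, whose compression has eigenvalue $0$ at distance $1$ from $\operatorname{Sp}(M)$. Any closeness must come from the residual bound, not from interlacing, so the triangle-inequality finish as written does not go through, and the concluding ``counting/pigeonhole argument ... the rest is routine'' is precisely the part that is not routine: even granting that each single $\theta_i$ is near \emph{some} eigenvalue of $M$ (which does follow, since $MV - VM_E = P_{E^\bot}(MV - V\Lambda)$ shows each $Vc_i$ is an approximate eigenvector with residual at most $\sqrt{k}\,\varepsilon$), you still need a matching that assigns \emph{distinct eigenvalue slots} (counted with multiplicity) to the $k$ indices, e.g.\ when several $\theta_i$ cluster near a simple eigenvalue of $M$; nothing in the proposal rules this out.

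The paper closes exactly this multiplicity issue by a different device: it builds the comparison matrix $S = V\Lambda V^\top + P_{E^\bot} M P_{E^\bot}$, whose spectrum contains $\lambda_1,\dots,\lambda_k$ as a sub-multiset by construction, shows $\norm{M - S} \leq 2\norm{MV - V\Lambda} \leq 2\sqrt{k}\,\varepsilon$, and then applies Weyl's inequality to the full sorted spectra of $M$ and $S$, which automatically produces a bijective, multiplicity-respecting matching within $2\sqrt{k}\,\varepsilon$. If you want to salvage your route, you would have to prove a block version of the ``approximate eigenvector implies approximate eigenvalue'' lemma — for instance, that for any group of $m$ orthonormal vectors with residuals $\leq \sqrt{k}\,\varepsilon$ and Rayleigh values in an interval $I$, the matrix $M$ has at least $m$ eigenvalues in a slightly enlarged interval — and that argument essentially reconstructs the comparison-matrix-plus-Weyl proof. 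As it stands, the proposal does not establish the lemma.
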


\begin{proof}
  Let $E$ be the subspace spanned by the $v_i$, and
  \[ S = V\Lambda V^\top + P_{E^\bot} M P_{E^\bot}, \]
  where $V=[v_1,\dots,v_k]\in \dR^{n\times k}$ and $\Lambda = \operatorname{diag}(\lambda_1, \dots, \lambda_k)$. Then
  \begin{align*}
    \norm{M  - S} &= \norm{(P_E + P_{E^\bot})M(P_E + P_{E^\bot}) - S} \\
    &\leq \norm{M P_E - V\Lambda V^\top} + \norm{P_E M P_{E^\bot}} \\
    &\leq 2\norm{M P_E - V\Lambda V^\top},
  \end{align*}
  since $P_E M P_{E^\bot} = (P_E M - V\Lambda V^\top) P_{E^\bot}$. Using the formula $P_E = VV^\top$, we have
  \begin{align} \label{eq:2infty}
    \norm{M P_E - V\Lambda V^\top}\le \norm{M V - V \Lambda} \leq \sqrt{k}\norm{(MV - V\Lambda)^\top}_{2, \infty}=\sqrt{k}\max_{1\leq i\leq k} \|Mv_i-\lambda_iv_i \|
  \end{align}
  where in the last inequality, we use the fact that for any $n\times k$ matrix $A$,
  \begin{align}
    \|A\|\leq \min\{\sqrt{n} \|A\|_{2,\infty}, \sqrt{k} \|A^\top\|_{2,\infty} \}.
  \end{align}
  See, e.g., \cite[Proposition 6.5]{cape2019two}.
  From the assumption, the right-hand side of \eqref{eq:2infty} is at most $\sqrt{k}\varepsilon$, which concludes the proof.
\end{proof}

\begin{lemma}[A local Davis-Kahan theorem]\label{lem:davis_kahan}
  Let $M$ be a Hermitian matrix, and $E$ be a subspace of $\dR^n$ stable w.r.t $M$. Assume that there exists a $\lambda\in\dR$ and a unit vector $v$ such that $\norm{Mv - \lambda v} \leq \varepsilon$. Then we have
  \[ \mathrm{dist}(v, E) \leq \frac{\varepsilon}{\mathrm{dist}(\lambda, \mathrm{Sp}(M\big|_{E^\bot}))} \]
\end{lemma}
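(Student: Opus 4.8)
The plan is to reduce the claim to the elementary one–dimensional spectral estimate $\norm{(N - \lambda I)w} \geq \mathrm{dist}(\lambda, \mathrm{Sp}(N))\,\norm{w}$, valid for any Hermitian $N$ and any vector $w$, applied to the compression $N := M\big|_{E^\bot}$. Throughout, I write $P_E$ and $P_{E^\bot}$ for the orthogonal projections onto $E$ and $E^\bot$.

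First I would record the structural fact that, because $M$ is Hermitian and $E$ is $M$-stable, the orthogonal complement $E^\bot$ is $M$-stable as well: for $x \in E^\bot$ and $y \in E$ we have $\langle Mx, y\rangle = \langle x, My\rangle = 0$ since $My \in E$, so $Mx \in E^\bot$. Consequently $M$ commutes with both $P_E$ and $P_{E^\bot}$, and the compression $M\big|_{E^\bot}$ is a genuine Hermitian operator with a well-defined real spectrum $\mathrm{Sp}(M\big|_{E^\bot})$.

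Next I would split the unit vector as $v = v_E + v_{E^\bot}$ with $v_E = P_E v$, $v_{E^\bot} = P_{E^\bot}v$, and project the approximate eigenvalue relation onto $E^\bot$. Using the commutation of $M$ with $P_{E^\bot}$,
\[ P_{E^\bot}(Mv - \lambda v) = \bigl(M\big|_{E^\bot} - \lambda I\bigr)v_{E^\bot}, \]
and since an orthogonal projection is a contraction, the left-hand side has norm at most $\norm{Mv - \lambda v} \leq \varepsilon$. On the other hand, diagonalizing the Hermitian operator $M\big|_{E^\bot}$ gives
\[ \bigl\|\bigl(M\big|_{E^\bot} - \lambda I\bigr)v_{E^\bot}\bigr\| \;\geq\; \mathrm{dist}\bigl(\lambda, \mathrm{Sp}(M\big|_{E^\bot})\bigr)\,\norm{v_{E^\bot}}. \]
Combining the two displays yields $\norm{v_{E^\bot}} \leq \varepsilon / \mathrm{dist}(\lambda, \mathrm{Sp}(M\big|_{E^\bot}))$, and since $\mathrm{dist}(v, E) = \norm{v - P_E v} = \norm{v_{E^\bot}}$, this is exactly the claimed inequality.

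There is essentially no genuine obstacle: the only point needing care is the invariance of $E^\bot$ under $M$, which is what makes the phrase $M\big|_{E^\bot}$ meaningful and its spectrum real; once that is in place the estimate is a two-line chaining of a projection bound with the standard resolvent-type lower bound for Hermitian matrices. If $\mathrm{dist}(\lambda, \mathrm{Sp}(M\big|_{E^\bot})) = 0$ the inequality is vacuously true, so one may harmlessly assume this quantity is strictly positive.
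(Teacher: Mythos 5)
Your proof is correct and follows essentially the same route as the paper: project the residual $Mv - \lambda v$ onto $E^\bot$, use the $M$-invariance of $E^\bot$ (which the paper invokes implicitly via $\norm{P_{E^\bot}Mv} = \norm{MP_{E^\bot}v}$) together with the contraction property of the projection, and lower-bound by the spectral gap of $M\big|_{E^\bot}$ at $\lambda$. The only cosmetic differences are that the paper first reduces to $\lambda = 0$ while you keep $\lambda$ general, and that you spell out the invariance of $E^\bot$ explicitly.
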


\begin{proof}
  For simplicity, we assume that $\lambda = 0$ and we let $\delta = \mathrm{dist}(\lambda, \mathrm{Sp}(M\big|_{E^\bot}))$. By the Pythagorean theorem and stability of $E$, we have
  \[ \varepsilon \geq \norm{Mv} \geq \norm{P_{E^\bot} M v} = \norm{M P_{E^\bot} v} \geq \delta \norm{P_{E^\bot} v},\]
  since every eigenvalue of $M\big|_{E^\bot}$ has absolute value at least $\delta$. Since $\mathrm{dist}(v, E) = \norm{P_{E^\bot} v}$, this concludes the proof.
\end{proof}

\section{Proof of Corollary~\ref{cor:hatd}}\label{sec:app:cor}
\begin{proof}
  Since $\hat{d}=\frac{2|E|}{n}$ is a normalized sum of $\binom{n}{2}$ many independent Bernoulli random variables. By Chernoff's inequality, we have $\hat{d}=d+O\left(\sqrt{\frac{d\log n}{n}} \right)$ with high probability. Therefore
  \begin{align}
    \left\| H(\sqrt{d})-H\left( \sqrt{\hat d}\right)\right\|&\leq |d-\hat d| +\left|\sqrt d-\sqrt{\hat d} \right| \|A\|\\
    &=O((d\log n)^{1.5}n^{-1/2}),
  \end{align}
  where we use $\|A\|=O(d\log n)$ with high probability.
  Repeating the proof of Theorem~\ref{thm:count} in Section~\ref{sec:proof_count}  with $H(\sqrt{d})$ replaced by $H\left(\sqrt{\hat d}\right)$ implies the result.
\end{proof}

\end{document}